\definecolor{linkred}{rgb}{0.7,0.2,0.2}
\definecolor{linkblue}{rgb}{0,0.2,0.6}
\numberwithin{figure}{section}
\DeclareFontFamily{OMS}{rsfs}{\skewchar\font'60}
\DeclareFontShape{OMS}{rsfs}{m}{n}{<-5>rsfs5 <5-7>rsfs7 <7->rsfs10 }{}
\DeclareSymbolFont{rsfs}{OMS}{rsfs}{m}{n}
\DeclareSymbolFontAlphabet{\scr}{rsfs}
\DeclareSymbolFontAlphabet{\scr}{rsfs}
\DeclareMathOperator{\codim}{codim}
\DeclareMathOperator{\Hom}{Hom}
\DeclareMathOperator{\Image}{Image}
\DeclareMathOperator{\Pic}{Pic}
\DeclareMathOperator{\rank}{rank}
\DeclareMathOperator{\reg}{reg}
\DeclareMathOperator{\Sym}{Sym}
\newcommand{\sA}{\scr{A}}
\newcommand{\sB}{\scr{B}}
\newcommand{\sE}{\scr{E}}
\newcommand{\sF}{\scr{F}}
\newcommand{\sG}{\scr{G}}
\newcommand{\sH}{\scr{H}}
\newcommand{\sL}{\scr{L}}
\newcommand{\sN}{\scr{N}}
\newcommand{\sO}{\scr{O}}
\newcommand{\sQ}{\scr{Q}}
\newcommand{\sS}{\scr{S}}
\newcommand{\sT}{\scr{T}}
\newcommand{\bC}{\mathbb{C}}
\newcommand{\bN}{\mathbb{N}}
\newcommand{\bP}{\mathbb{P}}
\newcommand{\bQ}{\mathbb{Q}}
\newcommand{\bR}{\mathbb{R}}
\newcommand{\bZ}{\mathbb{Z}}
\theoremstyle{plain}   
\newtheorem{thm}{Theorem}[section]
\newtheorem{cor}[thm]{Corollary}
\newtheorem{defn}[thm]{Definition} 
\newtheorem{lem}[thm]{Lemma}
\newtheorem{prop}[thm]{Proposition}
\theoremstyle{remark}
\newtheorem{assumption}[thm]{Assumption} 
\newtheorem{asswlog}[thm]{Assumption w.l.o.g.}
\newtheorem{c-n-d}[thm]{Claim and Definition}
\newtheorem{notation}[thm]{Notation}
\newtheorem{rem}[thm]{Remark}
\newtheorem{question}[thm]{Question}
\newtheorem*{rem-nonumber}{Remark}
\numberwithin{equation}{thm}
\setlist[enumerate]{label=(\thethm.\arabic*), before={\setcounter{enumi}{\value{equation}}}, after={\setcounter{equation}{\value{enumi}}}}
\newcommand{\into}{\hookrightarrow}
\newcommand{\wtilde}{\widetilde}
\newcommand{\what}{\widehat}
\def\clap#1{\hbox to 0pt{\hss#1\hss}}
\def\mathclap{\mathpalette\mathclapinternal}
\def\mathclapinternal#1#2{%
\clap{$\mathsurround=0pt#1{#2}$}}
\newcommand\CounterStep{\addtocounter{thm}{1}\setcounter{equation}{0}}
\newcommand{\factor}[2]{\left. \raise 2pt\hbox{$#1$} \right/\hskip -2pt\raise -2pt\hbox{$#2$}}
\let\saveqed\qed
\renewcommand\qed{%
   \ifmmode\displaymath@qed
   \else\saveqed
   \fi}
\newcommand{\Preprint}[1]{#1}
\newcommand{\Publication}[1]{}
\newcommand{\subversionInfo}{}
\newcommand{\svnid}[1]{}
\newcommand{\approvals}[1]{}
\DeclareMathOperator{\Div}{Div}
\DeclareMathOperator{\Gl}{Gl}
\DeclareMathOperator{\Mov}{Mov}
\DeclareMathOperator{\SStab}{SStab}
\DeclareMathOperator{\Stab}{Stab}
\title[Movable curves and semistable sheaves]{Movable curves and semistable sheaves}
\author{Daniel Greb}
\author{Stefan Kebekus}
\author{Thomas Peternell}
\address{Daniel Greb, Essener Seminar für Algebraische Geometrie und Arithmetik, Fakultät für Mathematik, Universität Duisburg-Essen, 45117 Essen, Germany}
\email{\href{mailto:daniel.greb@uni-due.de}{daniel.greb@uni-due.de}}
\urladdr{\href{http://www.esaga.uni-due.de/daniel.greb/}{http://www.esaga.uni-due.de/daniel.greb/}}
\address{Stefan Kebekus, Mathematisches Institut, Albert-Ludwigs-Universität
  Freiburg, Eckerstra{\ss}e 1, 79104 Freiburg im Breisgau, Germany, and
  University of Strasbourg Institute for Advanced Study (USIAS), Strasbourg,
  France}
\email{\href{mailto:stefan.kebekus@math.uni-freiburg.de}{stefan.kebekus@math.uni-freiburg.de}}
\urladdr{\href{http://home.mathematik.uni-freiburg.de/kebekus}{http://home.mathematik.uni-freiburg.de/kebekus}}
\address{Thomas Peternell, Mathematisches Institut, Universität
  Bayreuth, 95440~Bayreuth, Germany}
\email{\href{mailto:thomas.peternell@uni-bayreuth.de}{thomas.peternell@uni-bayreuth.de}}
\urladdr{\href{http://btm8x5.mat.uni-bayreuth.de/mathe1}{http://btm8x5.mat.uni-bayreuth.de/mathe1}}
\thanks{All three authors were partially supported by the DFG-Forschergruppe 790
  ``Classification of Algebraic Surfaces and Compact Complex Manifolds''.  Stefan Kebekus acknowledges additional support through a joint fellowship of the Freiburg Institute of Advanced Studies (FRIAS) and the University of
  Strasbourg Institute for Advanced Study (USIAS)}
\definecolor{linkred}{rgb}{0.7,0.2,0.2}
\definecolor{linkblue}{rgb}{0,0.2,0.6}
\date{\today}
\begin{document}

\begin{abstract}
  This paper extends a number of known results on slope-semistable sheaves from
  the classical case to the setting where polarisations are given by movable
  curve classes.  As applications, we obtain new flatness results for reflexive
  sheaves on singular varieties, as well as a characterisation of finite
  quotients of Abelian varieties via a Chern class condition.
\end{abstract}

\maketitle
\tableofcontents

%
%
\svnid{$Id: 01.tex 216 2015-03-10 15:53:08Z peternell $}

\section{Introduction}
\subversionInfo
\approvals{
  Daniel & yes \\
  Stefan & yes \\
  Thomas & yes
}

Given an $n$-dimensional, complex, projective manifold $X$, the notion of
\emph{stability} is arguably the single most important concept in the discussion
of sheaves of $\sO_X$-modules and their moduli.  A widely-used stability notion
is \emph{slope-stability}.  Its well-known definition depends on the choice of a
very ample line bundle $\sH ∈ \Pic(X)$ and asks to associate to any sheaf $\sE $
of positive rank the slope
$$
μ_{\sH}(\sE) := \frac{c_1(\sH)^{n-1} · c_1(\det E)}{\rank \sE} = \frac{[C]
  · c_1(\det E)}{\rank \sE},
$$
where $[C]$ is the numerical class of a curve $C$, obtained as the intersection
of general elements $(H_i)_{1≤i<n} ∈ |\sH|$, that is, $C := H_1 ∩ \cdots ∩
H_{n-1}$.  The sheaf $\sE$ is said to be semistable with respect to $\sH$ if
$\sE$ is torsion free and if $μ_{\sH}(\sF) ≤ μ_{\sH}(\sE)$ for any non-zero
subsheaf $\sF ⊆ \sE$.

It turns out that these notions are often not flexible enough to allow for
applications in higher-dimensional birational geometry.  There, one frequently
needs to discuss a number of birational models, and compare sheaves that live on
one model with sheaves that live on another.  However, the pull-back of an ample
polarisation is generally not ample.  In this setting, it is often advantageous
to generalise the notion of slope, replacing the class $[C]$ with a general
movable curve class.  Recall that a numerical curve class $α$ is movable if $D
· α ≥ 0$ for all effective divisors $D$.  Sample applications in birational
geometry are given in \cite{Miyaoka87, CP11, ExtApplications}.  In a different
direction, the paper \cite{GT13} uses polarisations by movable curves to resolve
pathological wall-crossing phenomena for moduli spaces of sheaves on
higher-dimensional varieties.

The present paper extends a number or known results from the classical case to
the setting where polarisations are given by movable curve classes.  As
applications, we prove new flatness results for reflexive sheaves on singular
varieties, as well as a uniformisation result.

\subsection{Outline of the paper}
\approvals{
  Daniel & yes \\
  Stefan & yes\\
  Thomas & yes
}

We begin in Section~\ref{sect:2} with a brief account of the relevant
definitions and of elementary properties, including the existence of
Harder-Narasimhan and Jordan-Hölder filtrations, boundedness results and a weak
Mehta-Ramanathan theorem.  Once these basics are established, the following
aspects will be discussed in the remainder of the paper.

\subsubsection{Openness of stability}
\approvals{
  Daniel & yes \\
  Stefan & yes\\
  Thomas & yes
}

We will show in Section~\ref{sec:openness} that stability is an open property in
the interior of the movable cone.  Somewhat more generally, we show that if a
sheaf $\sE$ is stable with respect to a movable class $α$ that lies on the
boundary of the movable cone, then $α$ can be approximated by a sequence of
$\sE$-stabilising classes from the interior.

\subsubsection{Semistability of tensor products}
\approvals{
  Daniel & yes \\
  Stefan & yes \\
  Thomas & yes
}

When semistability is defined with respect to an ample class, a classical
theorem (specific to characteristic zero) asserts that the tensor product of two
semistable locally free sheaves is again semistable,
\cite[Thm.~3.1.4]{MR2665168}.  Theorem~\ref{thm:mainthm} of
Section~\ref{sec:tensor} extends this result to sheaves that are semistable with
respect to movable curve classes on possibly singular spaces.

\begin{rem}[Erratum to \cite{ExtApplications}]
  The assertion of Theorem~\ref{thm:mainthm} is stated in \cite{ExtApplications}
  as ``Fact~A.13'', referring to \cite[Thm.~5.1 and Cor.~5.2]{CP11} for a proof.
  However, these references work under the additional assumption that the
  movable class $α ∈ N_1(X)_{\bQ}$ is \emph{big}, that is, contained in the
  interior of the movable cone.  This assumption is not necessarily satisfied in
  the setting of \cite{ExtApplications}.  Theorem~\ref{thm:mainthm} fills this
  gap.
\end{rem}

\subsubsection{Bogomolov-Gieseker inequalities}
\label{ssec:introBGI}
\approvals{
  Daniel & yes \\
  Stefan & yes\\
  Thomas & yes
}
    
Section~\ref{sec:BGI} discusses the Bogomolov-Gieseker inequality for movable
polarisations on a smooth surface: if $\sE$ is a torsion-free coherent sheaf of
rank $r$ on a smooth projective surface, which is semistable with respect to
some non-zero movable curve class, then there is an inequality of Chern numbers,
$2r · c_2(\sE) ≥ (r-1) · c_1²(\sE)$.

\subsubsection{Flatness criteria}
\approvals{
  Daniel & yes \\
  Stefan & yes \\
  Thomas & yes
}

The Bogomolov-Gieseker inequality will be applied in Section~\ref{sec:flatness}
to generalise a flatness criterion of Simpson.  Let $X$ be smooth, complex,
projective surface $X$, let $α$ be a movable class and $\sE$ be a torsion-free
sheaf such that the following numbers vanish,
$$
c_1(\sE) · α = c_1(\sE)² - c_2(\sE) = 0.
$$
If $\sE$ is $α$-semistable and $α² > 0$, then $\sE$ is a locally free, flat
sheaf.  In other words, $\sE$ is given by a linear representation of the
fundamental group.  We also obtain a criterion for projective flatness,
Theorem~\ref{thm:flat:2a}.

The following theorem is a consequence of the flatness criterion and of results
obtained by the authors in \cite{GKP13}.  For convenience, the following
notation is used.  If $f: A → B$ is any morphism of quasi-projective varieties
and if $\sS$ is any coherent sheaf of $\sO_B$-modules, write $f^{[*]} \sS :=
(f^* \sS)^{**}$.  A finite, surjective morphism is called quasi-étale if it is
unbranched in codimension one.

\begin{thm}[Flatness criterion = Theorem~\vref{thm:flat:2b}]\label{thm:1-2}
  Let $X$ be a normal, projective, $\bQ$-factorial variety of dimension $n$ with
  only canonical singularities.  Let $\sE$ be a reflexive sheaf on $X$ and $H ∈
  \Div(X)$ an ample divisor.  Suppose that $\sE$ is $H$-semistable and that
  there exists a desingularisation $π: \wtilde X → X$ such that the following
  two equalities hold,
  $$
  0 = c_1(\sE) · H^{n-1} \quad\text{and}\quad 0 = c_1 \bigl( π^{[*]} \sE
  \bigr)² · \bigl( π^* H \bigr)^{n-2} - c_2 \bigl( π^{[*]} \sE \bigr) ·
  \bigl( π^* H \bigr)^{n-2}.
  $$
  Then, there exists a quasi-étale morphism $γ: \what X → X$ such that $γ^{[*]}
  \sE$ is a locally free, flat sheaf on $\what X$.  \qed
\end{thm}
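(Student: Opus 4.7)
The plan is to transport the problem to a desingularisation $\widetilde X$, apply the surface-level flatness criterion of Section~\ref{sec:flatness} to a complete-intersection restriction of the reflexive pullback $π^{[*]}\sE$, extend the resulting flat structure back to all of $\widetilde X$, and finally descend it to a quasi-étale cover of $X$ via \cite{GKP13}.

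First, I would show that $π^{[*]}\sE$ is semistable on $\widetilde X$ with respect to the movable curve class $α := (π^*H)^{n-1}$, which lies in the interior of the movable cone since $π^*H$ is big and nef. Given any subsheaf $\sF ⊆ π^{[*]}\sE$, the reflexive pushforward $(π_*\sF)^{**}$ is a subsheaf of $\sE$ on $X$; the projection formula, combined with $\bQ$-factoriality and the fact that $π$ is an isomorphism over a big open subset, identifies the $α$-slope of $\sF$ with the $H$-slope of $(π_*\sF)^{**}$. Thus $H$-semistability of $\sE$ upgrades to $α$-semistability of $π^{[*]}\sE$, and the two numerical hypotheses translate directly into $c_1(π^{[*]}\sE) · α = 0$ together with the vanishing of the $(π^*H)^{n-2}$-discriminant of $π^{[*]}\sE$.

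Next, I would restrict to a very general complete-intersection surface $S = D_1 ∩ \cdots ∩ D_{n-2}$ with $D_i ∈ |m π^*H|$ for $m \gg 0$. By the weak Mehta--Ramanathan theorem of Section~\ref{sect:2}, the restriction $π^{[*]}\sE|_S$ remains semistable with respect to the induced movable class $α_S$, which satisfies $α_S^2 > 0$ because $π^*H$ is big and nef. The Chern-class conditions restrict compatibly, so the surface-level flatness criterion applies and shows that $π^{[*]}\sE|_S$ is locally free and flat. To lift this conclusion from $S$ to $\widetilde X$, I would combine the observation that semistability with vanishing discriminant forces $π^{[*]}\sE$ to be locally free in codimension two and to admit a Hermite--Einstein metric on its locally free locus, with a Lefschetz-type argument: since $S$ is cut out by sufficiently positive divisors, the monodromy representation determined on $S$ extends to $π_1$ of $\widetilde X$, and the associated flat bundle agrees with $π^{[*]}\sE$.

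Finally, I would descend the flat structure from $\widetilde X$ to a quasi-étale cover of $X$. Since the image in $X$ of the exceptional locus of $π$ has codimension at least two, the flat structure on $π^{[*]}\sE$ induces a representation of the fundamental group of $X_{\reg}$. Applying the results of \cite{GKP13} on flat bundles and étale fundamental groups of varieties with canonical singularities then produces a quasi-étale cover $γ \colon \what X → X$ on which this representation integrates to a locally free, flat sheaf that, by construction, agrees with $γ^{[*]}\sE$. I expect the main obstacle to be this final descent step: one must carefully align the flat structures obtained on $\widetilde X$ with the étale fundamental-group data available on $X$, and arrange that the quasi-étale cover furnished by \cite{GKP13} is precisely the one trivialising the obstructions to flatness that the singularities of $X$ may introduce.
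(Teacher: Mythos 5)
Your overall skeleton --- restrict to a complete-intersection surface, apply the two-dimensional flatness criterion there, extend the flat structure, and descend via \cite{GKP13} --- is the same as the paper's, but the extension step as you describe it contains a genuine gap, and it is precisely the step on which the whole argument turns.

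You propose to extend the monodromy representation from the surface to $\pi_1(\widetilde X)$ and thereby obtain a flat bundle on all of $\widetilde X$ agreeing with $\pi^{[*]}\sE$. This cannot work in general. First, the divisors cutting out your surface $\widetilde S \subset \widetilde X$ lie in $|m\pi^*H|$, and $\pi^*H$ is only big and nef, not ample, so no Lefschetz theorem makes $\pi_1(\widetilde S) \to \pi_1(\widetilde X)$ injective. Second, and more fundamentally, the representation of $\pi_1(\widetilde S)$ produced by the surface criterion need not factor through $\pi_1(\widetilde X)$ at all: one has $\pi_1(\widetilde S) \cong \pi_1(S) \cong \pi_1(X^\circ)$ for a big open subset $X^\circ \subseteq X$, and the map $\pi_1(X^\circ) \to \pi_1(X) \cong \pi_1(\widetilde X)$ is surjective but usually not injective --- its kernel records the local fundamental groups of the singularities. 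A representation that did factor through $\pi_1(\widetilde X)$ would descend to a flat bundle on $X$ itself, making the quasi-étale cover superfluous; the point of the theorem is that $\sE$ is in general only flat after passing to the cover. The paper therefore never extends to $\widetilde X$: it descends the flat structure from $\widetilde S$ to $S$ (the fibres of $\widetilde S \to S$ over the ADE singularities of $S$ are simply connected), then uses the Goresky--MacPherson Lefschetz theorem for the \emph{local complete intersection} locus $X^\circ \subset X$ (whose complement has codimension at least three by the local structure of canonical singularities in codimension two) to obtain $\pi_1(S) \cong \pi_1(X^\circ)$, and only then pulls back to the cover $\widehat X$ furnished by \cite[Thm.~1.13]{GKP13}, on which flat sheaves defined on the regular locus extend across the singular set. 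Your ``Hermite--Einstein metric on the locally free locus'' plays no role in this and is not justified by anything established in the paper.

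Two smaller points. The class $(\pi^*H)^{n-1}$ is \emph{not} in the interior of the movable cone of $\widetilde X$; it lies on the boundary, since $\pi^*H$ has degree zero on exceptional curves. This does not damage your semistability transfer, which is Proposition~\ref{prop:invariance}, but it does mean you cannot invoke bigness of the polarisation on $\widetilde X$. Finally, the identification of the extended flat sheaf with $\gamma^{[*]}\sE$ is not automatic ``by construction'': in the paper it requires the boundedness of flat sheaves of fixed rank together with the iterated Bertini-type Lemma~\ref{lem:bertstabY}, because the two sheaves are a priori only known to agree on the surface $\gamma^{-1}(S)$.
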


\begin{rem}[Coefficients in flatness criterion]
  In the setting of Theorem~\ref{thm:1-2}, the Hodge index theorem implies that
  $c_1 \bigl( π^{[*]} \sE \bigr)² · \bigl( π^* H \bigr)^{n-2} ≤ 0$.  Using the
  Bogomolov-Gieseker Inequality of Section~\ref{ssec:introBGI}, one obtains that
  the following conditions, which seemingly depend on the choice of a real
  parameter $λ$,
  $$
  0 = c_1 \bigl( π^{[*]} \sE \bigr)² · \bigl( π^* H \bigr)^{n-2} - λ · c_2
  \bigl( π^{[*]} \sE \bigr) · \bigl( π^* H \bigr)^{n-2},
  $$
  are in fact equivalent for all $0 < λ < \frac{2r}{r-1}$.  The formulation of
  Theorem~\ref{thm:1-2} uses $λ = 1$.  The choice $λ = 2$ is often seen in the
  literature since it appears in the second Chern character,
  cf.~\cite[Cor.~3.10]{MR1179076}.
\end{rem}

\subsubsection{Characterisation of torus quotients}
\approvals{
  Daniel & yes \\
  Stefan & yes\\
  Thomas & yes
}

In the last section we apply the flatness criteria to the cotangent sheaf of a
manifold, in order to obtain the following characterisation result.

\begin{thm}[Characterisation of torus quotients = Theorem~\vref{thm:tquot}]
  Let $X$ be a normal $\bQ$-factorial projective variety of dimension $n$ with
  only canonical singularities and numerically trivial canonical bundle, $K_X
  \equiv 0$.  Assume that there exists a desingularisation $π: \wtilde X → X$
  and an ample divisor $H ∈ \operatorname{Dix}(X)$ such that $c_2(\wtilde X)
  · \bigl( π^*H \bigr)^{n-2} = 0$.  Then, $X$ is smooth in codimension two,
  there exists an Abelian variety $A$ and a quasi-étale morphism $γ: A → X$.
  \qed
\end{thm}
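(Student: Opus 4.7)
The strategy is to apply the flatness criterion of Theorem~\ref{thm:1-2} to the reflexive cotangent sheaf $\sE := \Omega_X^{[1]} = \bigl( \Omega_X^1 \bigr)^{**}$ and then to invoke the classical uniformisation of compact Kähler manifolds whose holomorphic tangent bundle admits a flat connection. Three hypotheses of Theorem~\ref{thm:1-2} must be checked. $H$-semistability of $\Omega_X^{[1]}$ follows from generic semipositivity of the cotangent sheaf in the canonical setting once $K_X \equiv 0$: any destabilising subsheaf of positive slope would violate Miyaoka-type nefness on a sufficiently general complete-intersection curve. The first Chern class conditions are immediate from $K_X \equiv 0$: indeed $c_1(\Omega_X^{[1]}) \cdot H^{n-1} = 0$ and, numerically, $c_1(π^{[*]}\Omega_X^{[1]}) = π^*K_X = 0$, so $c_1\bigl( π^{[*]}\Omega_X^{[1]} \bigr)^2 \cdot \bigl( π^*H \bigr)^{n-2} = 0$.

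The delicate input is the second Chern class condition, for which one must pass from the hypothesis $c_2(\wtilde X) \cdot (π^*H)^{n-2} = 0$ to the required vanishing $c_2(π^{[*]}\Omega_X^{[1]}) \cdot (π^*H)^{n-2} = 0$. The difference between these two intersection numbers is represented by a cycle supported on the exceptional locus of $π$, and for canonical singularities this difference is non-negative, a technical comparison established in \cite{GKP13}. Consequently, the hypothesised vanishing forces both the required $c_2$-equation and the vanishing of the exceptional contribution itself. The latter is precisely the obstruction detecting codimension-two singularities of $X$, so one simultaneously obtains that $X$ is smooth in codimension two.

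With all hypotheses verified, Theorem~\ref{thm:1-2} produces a quasi-étale morphism $γ: \what X \to X$ such that $γ^{[*]}\Omega_X^{[1]}$ is locally free and admits a flat holomorphic connection. Quasi-étaleness together with canonicity of $X$ ensures that $\what X$ is itself canonical and that the reflexive pullback coincides with $\Omega_{\what X}^{[1]}$. Since a reflexive sheaf of rank $n$ on a normal $n$-dimensional variety that agrees with Kähler differentials on the smooth locus can be locally free at a singular point only when the variety is in fact smooth there, it follows that $\what X$ is smooth. Thus $\what X$ is a smooth projective variety whose holomorphic tangent bundle admits a flat connection; classical uniformisation (going back to Iwasawa and Wang in the Kähler setting) identifies such manifolds as finite étale quotients of complex tori, and projectivity promotes the torus to an Abelian variety $A$. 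The composition $A \to \what X \to X$ is quasi-étale and furnishes the desired morphism.

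The main obstacle is the comparison of the two second-Chern-class intersection numbers used in the second paragraph. Once this comparison is in place, both the applicability of the flatness criterion and the assertion of smoothness in codimension two follow simultaneously, and the uniformisation step reduces to classical results. The semistability of $\Omega_X^{[1]}$ is a secondary concern that could also require attention if the generic semipositivity theorem is not readily available in precisely the $\bQ$-factorial canonical setting assumed here.
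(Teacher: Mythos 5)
Your overall strategy --- feed $\Omega^{[1]}_X$ into the flatness criterion and then uniformise --- is broadly in the spirit of the paper, and several ingredients (semistability of $\Omega^{[1]}_X$ via generic semipositivity when $K_X \equiv 0$, the Lipman--Zariski theorem to convert local freeness of the reflexive cotangent sheaf into smoothness, the final appeal to \cite[Thm.~1.16]{GKP13}) are sound. But the step you yourself flag as the main obstacle is a genuine gap, and it is exactly where the whole difficulty of the theorem sits. You assert that the difference $c_2(\wtilde X)\cdot(\pi^*H)^{n-2} - c_2\bigl(\pi^{[*]}\Omega^{[1]}_X\bigr)\cdot(\pi^*H)^{n-2}$ is a non-negative exceptional contribution ``established in \cite{GKP13}''. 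No such comparison is proved there; \cite[Thm.~1.16]{GKP13} assumes from the outset that $X$ is smooth in codimension two precisely so that this issue never arises, and removing that assumption is the entire point of the present theorem. The needed relation between the second Chern class of a resolution and that of the reflexive pullback of $\Omega^{[1]}_X$ is the content of the paper's Lemma~\ref{lem:7-2}, and the argument there is substantially more involved than a sign estimate on an exceptional cycle: one cuts down to a general complete intersection surface $S$, passes to a $\bQ$-factorial terminalisation $\rho\colon \what X \to X$ whose surface section $\what S$ is the \emph{minimal} resolution of $S$, compares topological Euler characteristics of $\wtilde S$ and $\what S$ to obtain $c_2\bigl(\Omega^1_{\what X}|_{\what S}\bigr) \leq 0$, and then invokes Miyaoka's Chern class inequality for the non-uniruled variety to force equality; only then do the surface flatness criterion (Theorem~\ref{thm:flat:1}, via Lemma~\ref{lem:7-1}) and Lipman--Zariski give smoothness along $S$. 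Your proposal would have to reproduce essentially all of this to justify the asserted inequality, so the citation cannot carry the weight you place on it.

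A second, related gap: you claim that the vanishing of the exceptional contribution ``is precisely the obstruction detecting codimension-two singularities of $X$'', so that smoothness in codimension two comes for free. This is not substantiated, and note that smoothness of the quasi-étale cover produced by Theorem~\ref{thm:flat:2b} would not by itself imply that $X$ is smooth in codimension two (quotient singularities along a codimension-two locus are perfectly compatible with a smooth quasi-étale cover). The paper's logical order is the reverse of yours: smoothness in codimension two is established first (Lemma~\ref{lem:7-2}), and only afterwards is the flatness and uniformisation machinery of \cite[Thm.~1.16]{GKP13} applied. Two smaller points: $c_1\bigl(\pi^{[*]}\Omega^{[1]}_X\bigr)$ need not equal $\pi^* K_X$ --- it can differ by a $\pi$-exceptional class whose self-intersection against $(\pi^*H)^{n-2}$ is a priori negative --- so even the verification of \eqref{eq:vanish2} requires an argument; and flatness in the sense of Definition~\ref{defn:flat} (an arbitrary linear representation of $\pi_1$) does not by itself yield a torus quotient without the additional input packaged in \cite[Thm.~1.16]{GKP13}.
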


This result generalises a result for three-dimensional varieties of
Shepherd-Barron and Wilson \cite[Cor.~of Main Thm]{SBW94}, and eliminates the a
priori assumption on the codimension of the singular locus made in
\cite[Thm.~1.16]{GKP13}.  After the proof of this result was completed, we
learned that Lu-Taji obtained similar results, to appear in a forthcoming
preprint.

\subsection{Global assumptions}
\approvals{
  Daniel & yes \\
  Stefan & yes\\
  Thomas & yes
}

Throughout the paper, we work over the complex number field.  If not mentioned
otherwise, all sheaves are assumed to be coherent.

\subsection{Acknowledgements}
\approvals{
  Daniel & yes \\
  Stefan & yes\\
  Thomas & yes
}

We would like to thank Matei Toma for important suggestions.  The main idea of
Theorem~\ref{thm:toma} goes back to him.  The authors would also like to thank
two anonymous referees for their helpful and very detailed reports.  A part of
this work was done during a very pleasant stay of the third named author at the
Freiburg Institute for Advanced Studies.

%
%
\svnid{$Id: 02.tex 229 2015-04-15 08:14:24Z kebekus $}

\section{Semistability with respect to a movable class}
\label{sect:2}
\subversionInfo

\subsection{Numerical classes}
\approvals{
  Daniel & yes \\
  Stefan & yes \\
  Thomas & yes
}

Given a normal projective variety $X$, we consider the space $N_1(X)_{\bR}$ of
numerical curve classes, as well as the space $N¹(X)_{\bR}$ of numerical
Cartier divisor classes.  We refer the reader to \cite[Sect.~II.4]{K96} for a
brief definition and discussion of these spaces.  Recall from \cite[Sect.~II.4,
(4.2.5)]{K96} that the intersection number of curves and Cartier divisors gives
rise to a nondegenerate bilinear pairing\CounterStep
\begin{equation}\label{eq:pairing}
  N_1(X)_{\bR} ⨯ N¹(X)_{\bR} → \bR.
\end{equation}

\begin{defn}[Cone of movable curve classes]\label{defn:movable}
  A class $α ∈ N_1(X)_{\bR}$ is called \emph{movable} if $α · D ≥ 0$ for any
  effective Cartier divisor $D$.  The set of movable classes forms a closed,
  convex cone $\Mov(X) ⊂ N_1(X)_{\bR}$, called the \emph{movable cone}.
\end{defn}

\begin{rem}\label{rem:BDPP}
  If $X$ is a manifold, it has been shown in \cite{BDPP} that the movable cone
  is the closure of the convex cone in $N_1(X)_{\bR}$ generated by curves whose
  deformations cover a dense subset of $X$.  A divisor class $Δ ∈ N¹(X)_{\bR}$
  is pseudo-effective if and only if the associated function $\bullet · Δ$ is
  non-negative on $\Mov(X)$.
\end{rem}

\begin{defn}[Big movable class]
  Let $α ∈ N_1(X)_{\bR}$ be a movable class.  We say that $α$ is \emph{big} if
  it lies in the interior of the movable cone.
\end{defn}

\subsubsection{$\bQ$-Cartier divisors on singular spaces}
\approvals{
  Daniel & yes \\
  Stefan & yes \\
  Thomas & yes
}

If $X$ is $\bQ$-factorial, we briefly show that there exists a number $m$ such
that for any Weil divisor $D$, the numerical class $[m· D]$ is the numerical
class of a Cartier divisor.  The following notation will be used.

\begin{defn}[$\bQ$-Cartier divisors that are numerically Cartier]
  Let $X$ be a normal, projective variety.  If $D$ is any $\bQ$-Cartier,
  $\bQ$-Weil divisor on $X$, then $D$ defines a numerical class $[D] ∈
  N¹(X)_{\bQ} ⊂ N¹(X)_{\bR}$.  We say that ``$D$ is numerically Cartier'' if
  there exists a Cartier divisor $Δ$ whose numerical class equals that of $D$,
  that is, $[D] = [Δ]$.
\end{defn}

\begin{defn}[Numerical classes of sheaves]
  Let $X$ be a normal, projective variety.  If $\sF$ is any coherent sheaf on
  $X$, its determinant is a Weil divisorial sheaf, say $\det \sF \cong
  \sO_X(D)$.  If $D$ is $\bQ$-Cartier, we define the numerical class of $\sF$ as
  $[\sF] := [D] ∈ N¹(X)_{\bQ} ⊂ N¹(X)_{\bR}$.  If $X$ is smooth, we also use the
  traditional notation $c_1(\sF) = [\sF]$.
\end{defn}

\begin{thm}[Finite generation of numerical divisor classes]\label{thm:finiteQCart}
  Let $X$ be a normal, $\bQ$-factorial, projective variety.  Then, the set
  $$
  N_{\bQ\Div}(X) := \{ [D] \:|\: D \text{ an integral Weil divisor}\} ⊆ N¹(X)_{\bR}
  $$
  is a lattice which contains the lattice spanned by numerical classes of
  Cartier divisors.  In particular, there exists a positive integer $m ∈ \bQ^+$
  such that $m · D$ is numerically Cartier, for any integral Weil divisor
  $D$ on $X$.
\end{thm}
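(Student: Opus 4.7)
The idea is to use a resolution of singularities $\pi \colon \tilde X \to X$ to reduce the question to the known finite generation of $N^1(\tilde X)_\bZ$; the denominators introduced in pulling back Weil divisors are absorbed by projecting away from the classes of the exceptional divisors.

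First, $\bQ$-factoriality gives $[D] = \frac{1}{k}[k\cdot D] \in N^1(X)_\bQ$ for any positive integer $k$ such that $k\cdot D$ is Cartier, so $N_{\bQ\Div}(X) \subseteq N^1(X)_\bQ$. Fix a resolution $\pi \colon \tilde X \to X$ with $\tilde X$ smooth and projective, and let $E_1, \ldots, E_r$ be the prime $\pi$-exceptional divisors. The pullback $\pi^* \colon N^1(X)_\bR \to N^1(\tilde X)_\bR$, defined on Cartier classes and extended to $\bQ$-Cartier ones by $\pi^*[D] := \frac{1}{k}\pi^*[k\cdot D]$, is $\bQ$-linear; cycle pushforward in the opposite direction provides a $\bQ$-linear map $\pi_* \colon N^1(\tilde X)_\bR \to N^1(X)_\bR$, well-defined thanks to $\bQ$-factoriality of $X$, and satisfying $\pi_* \circ \pi^* = \Id$. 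In particular $\pi^*$ is injective. Setting $W := \pi^*N^1(X)_\bR$ and $V_E := \bR\langle[E_1], \ldots, [E_r]\rangle$, the relation $\pi_*[E_j] = 0$ combined with $\pi_* \circ \pi^* = \Id$ immediately yields $W \cap V_E = 0$ inside $N^1(\tilde X)_\bR$.

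Choose a $\bQ$-linear complement $W' \supseteq V_E$ of $W$ in $N^1(\tilde X)_\bR$, and let $p \colon N^1(\tilde X)_\bR \to W$ denote the associated $\bQ$-linear projection, so that $p$ vanishes on $V_E$. For any integral Weil divisor $D$ on $X$, its strict transform $\tilde D$ is Cartier on the smooth variety $\tilde X$, and $\pi^* D$ agrees with $\tilde D$ off the exceptional locus; hence $\pi^* D - \tilde D$ is a $\bQ$-divisor supported on $\bigcup E_j$. Passing to numerical classes in $N^1(\tilde X)_\bQ$,
\[
  \pi^*[D] \,=\, [\tilde D] + \sum_{j=1}^r a_j \cdot [E_j], \qquad a_j \in \bQ .
\]
Applying $p$ gives $\pi^*[D] = p\bigl([\tilde D]\bigr)$, with $[\tilde D] \in N^1(\tilde X)_\bZ$. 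Consequently $\pi^*\bigl(N_{\bQ\Div}(X)\bigr)$ is contained in $p\bigl(N^1(\tilde X)_\bZ\bigr)$, which is finitely generated as the homomorphic image of a finitely generated abelian group. Injectivity of $\pi^*$ then implies that $N_{\bQ\Div}(X)$ is finitely generated, and being torsion-free, it is a lattice in $N^1(X)_\bR$. It visibly contains $N^1(X)_\bZ$.

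For the final assertion, the quotient $N_{\bQ\Div}(X)/N^1(X)_\bZ$ is a finitely generated subgroup of the torsion group $N^1(X)_\bQ/N^1(X)_\bZ$, hence is finite. Any positive integer $m$ killing this quotient satisfies $m\cdot [D] = [m\cdot D] \in N^1(X)_\bZ$ for every integral Weil divisor $D$, as required. The conceptually crucial step is the disjointness $W \cap V_E = 0$: this ensures that a $\bQ$-linear projection $p$ vanishing on $V_E$ exists, and hence that integral classes on $\tilde X$ can be projected to a finitely generated (rather than merely $\bQ$-spanned) subgroup of $W$, uniformly bounding the denominators in $D$.
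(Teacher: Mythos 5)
Your proof is correct and follows essentially the same route as the paper's: both reduce to the Theorem of the Base of Néron--Severi on a resolution $\pi\colon \tilde X \to X$, using that the strict transform of an integral Weil divisor is Cartier on $\tilde X$ and differs from the pullback only by exceptional classes. The paper phrases this more compactly by observing that push-forward maps the finitely generated lattice $N^1(\tilde X)_{\bZ}$ onto $N_{\bQ\Div}(X)$, whereas you pull back and project away the exceptional directions; the two mechanisms are interchangeable, and your added verification of the final denominator-bounding claim is a welcome detail the paper leaves implicit.
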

\begin{proof}
  It suffices to show that $N_{\bQ\Div}(X)$ is finitely generated as a
  $\bZ$-module.  To this end, let $π: \wtilde{X} → X$ be any resolution of
  singularities.  Since $X$ is normal and $\bQ$-factorial, push-forward of Weil
  divisors induces a surjective map
  $$
  π_*: N¹(\wtilde X)_{\bQ} → N¹(X)_{\bQ}.
  $$
  The Theorem of the Base of Néron-Severi, \cite[II Th.~4.5]{K96}, asserts that
  $N¹(\wtilde X)_{\bZ}$ is finitely generated.  Its image under $π_*$ is exactly
  $N_{\bQ\Div}(X)$.
\end{proof}

\subsubsection{Push-forward and pull-back}
\approvals{
  Daniel & yes \\
  Stefan & yes \\
  Thomas & yes
}

Let $φ: X \dasharrow Y$ be any dominant, rational map between normal,
$\bQ$-factorial, projective varieties.  Assume that the inverse $φ^{-1}$ does
not contract any divisors ---examples are given by resolutions of singularities,
more generally birational morphisms, and maps obtained by running the minimal
model program.  In this setting, there are well-known results concerning
push-forward and pull-back.  In the case of rational numerical classes, these
are summarised in \cite{ExtApplications}.  By linearity, everything said there
also holds for real classes.  For the reader's convenience, we recall the most
important facts.

Push-forward of Weil divisors together with the pairing \eqref{eq:pairing}
induces linear morphisms
$$
φ_*: N¹(X)_{\bQ} → N¹(Y)_{\bQ} \quad\text{as well as}\quad φ^*: N_1(Y)_{\bQ} → N_1(X)_{\bQ}.
$$
Since the push-forward of any effective divisor is effective, it follows
immediately from Definition~\ref{defn:movable} that the pull-back of any movable
curve class is again movable.

\begin{prop}[Push-forward and pull-back of sheaves]\label{prop:invariance}
  Let $φ: X → Y$ be any dominant, birational morphism of normal,
  $\bQ$-factorial, projective varieties.  Let $α ∈ N_1(Y)_{\bR}$ be any class,
  and $\sF$, $\sG$ be torsion-free, coherent sheaves on $X$ and $Y$,
  respectively.  Then, the following holds.
  \begin{enumerate}
  \item We have $[\sF] · φ^*α = [φ_* \sF]· α$.

  \item If $\sF$ agrees with $φ^*\sG$ away from the $φ$-exceptional set, and if
    $(r,a) ∈ \bN ⨯ \bZ$ is any pair of numbers, then $\sG$ contains a subsheaf
    $\sG'$ of rank $r$ and intersection $[\sG'] · α = a$ if and only if $\sF$
    contains a subsheaf $\sF'$ of rank $r$ and intersection $[\sF'] · φ^*α = a$
    that agrees with $φ^*\sG'$ away from the $φ$-exceptional set.  \qed
  \end{enumerate}
\end{prop}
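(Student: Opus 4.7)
The plan is to deduce both parts from two basic facts: (a) for any torsion-free coherent sheaf $\sF$ on $X$, the determinants satisfy $\phi_*[\sF] = [\phi_*\sF]$ in $N^1(Y)_{\bR}$; and (b) away from the $\phi$-exceptional set, subsheaves on $X$ and subsheaves on $Y$ correspond bijectively, and this correspondence extends uniquely to the whole variety via saturation inside torsion-free sheaves.

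For part (1), I would write $\det\sF = \sO_X(D)$ for some Weil divisor $D$ on $X$, which is $\bQ$-Cartier by $\bQ$-factoriality, so that $[D] \in N^1(X)_{\bQ}$ is well defined (cf.\ Theorem~\ref{thm:finiteQCart}). Let $U\subseteq X$ be the open set where $\phi$ is an isomorphism and set $V := \phi(U)\subseteq Y$; by Zariski's main theorem, $\codim_Y(Y\setminus V)\geq 2$. On $V$ one has the canonical identification $\phi_*\sF|_V \cong (\phi|_U^{-1})^*(\sF|_U)$, from which $\det\phi_*\sF|_V \cong \sO_V(\phi_*D|_V)$. The two Weil divisors $\det\phi_*\sF$ and $\phi_*D$ therefore agree on the complement of a codimension-two subset, and hence agree globally on $Y$. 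This gives $[\phi_*\sF] = \phi_*[\sF]$ in $N^1(Y)_{\bR}$, and the asserted identity $[\sF]\cdot\phi^*\alpha = [\phi_*\sF]\cdot\alpha$ then follows from the fact that, by construction, $\phi^*$ on $N_1(Y)_{\bR}$ is the transpose of $\phi_*$ on $N^1(X)_{\bR}$ under the pairing \eqref{eq:pairing}.

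For part (2), I would construct the correspondence symmetrically. Given $\sG'\subseteq\sG$ of rank $r$, set $\sF'' := (\phi|_U)^*(\sG'|_V) \subseteq \sF|_U$ and let $\sF'\subseteq\sF$ be its saturation inside $\sF$, that is, the unique subsheaf whose restriction to $U$ equals $\sF''$ and for which $\sF/\sF'$ is torsion-free. Then $\rank\sF' = r$ and $\sF'$ agrees with $\phi^*\sG'$ on $U$ by construction. Conversely, given $\sF'\subseteq\sF$ of rank $r$ that agrees with $\phi^*\sG'$ off the exceptional set, recover $\sG'$ as the unique saturated subsheaf of $\sG$ whose restriction to $V$ equals $(\phi|_U)_*(\sF'|_U)$. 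These two assignments are mutually inverse on $U$, and uniqueness of the saturated extension then implies that they are inverse on all of $X$ and $Y$. To compare intersections, $\phi_*\sF'$ agrees with $\sG'$ on $V$, so the codimension-two argument from part (1) yields $[\phi_*\sF'] = [\sG']$ in $N^1(Y)_{\bR}$; combining this with (1) gives $[\sF']\cdot\phi^*\alpha = [\phi_*\sF']\cdot\alpha = [\sG']\cdot\alpha$.

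The main obstacle is the careful bookkeeping of saturated extensions on singular $\bQ$-factorial varieties: one must verify that the saturation inherits the correct rank, and that agreement of Weil divisors in codimension one (ignoring contributions supported on the $\phi$-exceptional locus on $X$ or on $Y\setminus V$) really does translate into agreement of numerical classes. Both steps use the $\bQ$-factoriality hypothesis in an essential way, as it is precisely this assumption that makes numerical classes of Weil divisorial sheaves such as $\det\sF$ well defined via Theorem~\ref{thm:finiteQCart}.
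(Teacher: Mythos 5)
The paper itself gives no proof of Proposition~\ref{prop:invariance} --- it is stated with a reference to \cite{ExtApplications} for the rational case and extended to real classes by linearity --- so there is nothing to compare against directly; your argument is correct and supplies exactly the standard details one would expect behind that citation: the identity $[\phi_*\sF]=\phi_*[\sF]$ via agreement of determinants over the big open set $V$ (using that $\codim_Y(Y\setminus V)\ge 2$ for a birational morphism of normal varieties), the definition of $\phi^*$ as the transpose of $\phi_*$ under the pairing \eqref{eq:pairing}, and the subsheaf correspondence via extension from $U\cong V$. One minor imprecision: the extension $\sF'$ of $\sF'':=(\phi|_U)^*(\sG'|_V)$ should be taken as $j_*(\sF'')\cap\sF$ inside $j_*(\sF|_U)$ (the largest subsheaf restricting to $\sF''$ on $U$); its quotient $\sF/\sF'$ is torsion-free only along $X\setminus U$ unless $\sG'$ was already saturated in $\sG$, but since rank and the determinant class are insensitive to this, the conclusion is unaffected.
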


\begin{rem}
  Using the notion of (semi)stability with respect to a movable curve class,
  cf.~Definition~\ref{def:semistability} below,
  Proposition~\ref{prop:invariance} asserts that $\sG$ is $α$-(semi)stable if
  and only if $\sF$ is $φ^*α$-(semi)stable.
\end{rem}

\subsection{Slope and semistability}
\approvals{
  Daniel & yes \\
  Stefan & yes \\
  Thomas & yes
}

We will use the following terminology throughout the paper.

\begin{defn}[Slope with respect to a movable class]\label{def:slope}
  Let $X$ be a normal, $\bQ$-factorial, projective variety and $α ∈ \Mov(X)$.
  If $\sE \ne 0$ is any torsion-free, coherent sheaf on $X$, one defines the
  \emph{slope of $\sE$ with respect to $α$} as the real
  number
  $$
  μ_{α}(\sE) := \frac{ [ \sE ] · α}{\rank \sE}.
  $$
\end{defn}

\begin{defn}[Semistability with respect to a movable class]\label{def:semistability}
  In the setting of Definition~\ref{def:slope}, we say that $\sE$ is
  \emph{α-semistable} if $μ_{α}(\sF) ≤ μ_{α}(\sE)$ for any coherent subsheaf $0
  \subsetneq \sF ⊆ \sE$.
\end{defn}

\begin{defn}[Stability with respect to a movable class]\label{def:stability}
  In the setting of Definition~\ref{def:slope}, we say that $\sE$ is
  \emph{α-stable} if $μ_{α}(\sF) < μ_{α}(\sE)$ for any coherent subsheaf $0
  \subsetneq \sF \subsetneq \sE$ with $\rank \sF < \rank \sE$.
\end{defn}

\subsection{Elementary properties}
\approvals{
  Daniel & yes \\
  Stefan & yes \\
  Thomas & yes
}

Essentially all elementary properties satisfied by sheaves that are stable with
respect to ample polarisations also hold when stability is defined by a movable
class.  For the reader's convenience, we summarise those properties that will be
relevant later.

\begin{prop}[Subbundle of equal rank]
  Let $X$ be a normal, $\bQ$-factorial, projective variety and $α ∈ \Mov(X)$.
  If $\sF ⊆ \sE$ are two torsion-free, coherent sheaves of equal rank, then
  $μ_{α}(\sF) ≤ μ_{α}(\sE)$.
\end{prop}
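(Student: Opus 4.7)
The plan is to reduce the inequality to the defining property of movability: non-negative pairing with effective Cartier divisors. First I would form the short exact sequence
\[
0 \to \sF \to \sE \to \sT \to 0, \qquad \sT := \sE/\sF.
\]
Because $\rank \sF = \rank \sE$, the quotient $\sT$ has rank zero, i.e.\ $\sT$ is a torsion sheaf supported in codimension at least one. Taking determinants (as rank-one reflexive Weil divisorial sheaves) yields $\det \sE \cong \det \sF \otimes \det \sT$, so in $N^1(X)_{\bQ}$ one obtains $[\sE] = [\sF] + [\det \sT]$.

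Next I would identify $\det \sT$ with an effective Weil divisorial sheaf. For each prime divisor $D \subset X$, the localisation $\sT_D$ has finite length $\ell_D \geq 0$ over $\sO_{X,D}$, and the standard description of the determinant on a normal variety gives
\[
\det \sT \cong \sO_X\Bigl( \sum_D \ell_D \cdot D \Bigr) =: \sO_X(E),
\]
where $E$ is an effective Weil divisor. The identification only needs to be checked on the smooth locus of $X$, whose complement has codimension at least two and hence does not affect rank-one reflexive sheaves.

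With $E$ at hand, $\bQ$-factoriality lets me conclude. By Theorem~\ref{thm:finiteQCart} there is an $m \in \bN^+$ such that $[mE]$ is the numerical class of a Cartier divisor; since $mE$ is effective and Cartier (on a normal variety an effective Weil divisor that is Cartier is an effective Cartier divisor), Definition~\ref{defn:movable} yields $[mE] \cdot \alpha \geq 0$, whence $[E] \cdot \alpha \geq 0$. Combined with $[\sE] - [\sF] = [E]$ in $N^1(X)_{\bQ}$ and division by the common rank, this gives $\mu_{\alpha}(\sF) \leq \mu_{\alpha}(\sE)$.

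The only non-routine point is the identification of $\det(\sE/\sF)$ with the effective divisorial sheaf $\sO_X(E)$: the argument is standard but needs a brief reduction to the smooth locus on a normal variety; everything after that is the definition of movability together with the finite-generation statement in Theorem~\ref{thm:finiteQCart}.
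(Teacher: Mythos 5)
Your argument is correct and follows essentially the same route as the paper: both identify $[\sE]-[\sF]$ as the class of an effective (Weil) divisor and conclude by pairing with the movable class $α$; you merely spell out the torsion quotient and the determinant computation that the paper leaves implicit. One tiny remark: to see that $α$ pairs non-negatively with the effective Weil divisor $E$ you only need $\bQ$-factoriality itself (some multiple $mE$ is an honest effective Cartier divisor), rather than the ``numerically Cartier'' statement of Theorem~\ref{thm:finiteQCart}, whose representing Cartier divisor is not known to be effective.
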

\begin{proof}
  There exists an effective Weil-divisor $D$ such that $\det \sF$ and $\det \sE$
  differ only by a twist with $D$, or more precisely, $\det \sF \cong \bigl(
  \sO_X(-D) \otimes \det \sE \bigr)^{**}$.  In particular, we have the following
  equality of numerical classes,
  $$
  [\sF] = [\sE] - [D].
  $$
  Since α is movable, we have that $[D] · α ≥ 0$, and the claim follows.
\end{proof}

\begin{cor}[Slope of saturation]\label{cor:slopesat}
  Let $X$ be a normal, $\bQ$-factorial, projective variety and $α ∈ \Mov(X)$.
  If $\sF ⊆ \sE$ are two torsion-free, coherent sheaves, and if
  $\sF_{\operatorname{sat}} ⊆ \sE$ denotes the saturation of $\sF$ in $\sE$,
  then $μ_{α}(\sF) ≤ μ_{α}(\sF_{\operatorname{sat}})$.  \qed
\end{cor}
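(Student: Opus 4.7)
The plan is to deduce this corollary directly from the immediately preceding proposition (Subbundle of equal rank). The key observation is that, by the very definition of the saturation of $\sF$ inside $\sE$, the inclusion $\sF \subseteq \sF_{\operatorname{sat}}$ is an inclusion of coherent sheaves of the same rank, and both sheaves are torsion-free because they sit inside the torsion-free sheaf $\sE$.

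Given these observations, I would first note that $\sF_{\operatorname{sat}}$ is torsion-free as a subsheaf of $\sE$, and then recall that $\rank \sF = \rank \sF_{\operatorname{sat}}$; indeed, $\sF_{\operatorname{sat}}/\sF$ is by construction the torsion part of $\sE/\sF$, and passing to the torsion subsheaf does not change the rank. Having checked these two hypotheses, the previous proposition applies verbatim to the pair $\sF \subseteq \sF_{\operatorname{sat}}$ and yields the desired inequality $μ_α(\sF) \leq μ_α(\sF_{\operatorname{sat}})$.

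There is really no obstacle to speak of: the only conceptual ingredient is that movability of $α$ forces the intersection number of $α$ with the effective divisor measuring the difference of determinants between $\sF$ and $\sF_{\operatorname{sat}}$ to be non-negative, and this was already packaged into the proposition. Consequently, I would expect the written proof to be no more than one or two sentences, simply citing the preceding proposition.
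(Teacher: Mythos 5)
Your argument is exactly the intended one: the paper states this corollary with no written proof precisely because it follows by applying the preceding proposition (Subbundle of equal rank) to the inclusion $\sF \subseteq \sF_{\operatorname{sat}}$, and your verification that both sheaves are torsion-free and of equal rank is the only content needed. Correct and same approach.
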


\begin{cor}[Stable sheaves are semistable]
  Let $X$ be a normal, $\bQ$-factorial, projective variety and $α ∈ \Mov(X)$.
  Then, α-stable sheaves are α-semistable.  \qed
\end{cor}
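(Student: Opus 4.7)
The plan is to observe that the only case where $\alpha$-semistability asks something that $\alpha$-stability does not already supply is when a non-zero subsheaf $\sF \subseteq \sE$ has rank equal to $\rank \sE$. I would therefore split the verification into two cases depending on $\rank \sF$.

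In the first case, when $0 \subsetneq \sF \subseteq \sE$ satisfies $\rank \sF < \rank \sE$, the definition of $\alpha$-stability (Definition~\ref{def:stability}) directly gives the strict inequality $\mu_\alpha(\sF) < \mu_\alpha(\sE)$, which in particular implies the weak inequality $\mu_\alpha(\sF) \leq \mu_\alpha(\sE)$ required by Definition~\ref{def:semistability}. The case $\sF = \sE$ is trivial, since then $\mu_\alpha(\sF) = \mu_\alpha(\sE)$.

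In the remaining case, $\sF \subsetneq \sE$ is a proper subsheaf with $\rank \sF = \rank \sE$. Here the definition of $\alpha$-stability gives no information at all, so I would invoke the preceding \emph{Subbundle of equal rank} proposition, which asserts that for any torsion-free inclusion of sheaves of equal rank on a normal, $\bQ$-factorial, projective variety one has $\mu_\alpha(\sF) \leq \mu_\alpha(\sE)$ whenever $\alpha \in \Mov(X)$. This precisely covers the missing case.

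There is no genuine obstacle here; the entire content is the case distinction by rank, together with the fact that the proposition on equal-rank subsheaves has already been established using only that $\alpha$ is movable. Combining the two cases yields $\mu_\alpha(\sF) \leq \mu_\alpha(\sE)$ for every non-zero coherent subsheaf $\sF \subseteq \sE$, which is exactly $\alpha$-semistability of $\sE$.
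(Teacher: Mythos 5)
Your proof is correct and matches the paper's intent exactly: the corollary is stated with \qed precisely because it follows from the definitions together with the immediately preceding \emph{Subbundle of equal rank} proposition, which handles the equal-rank case that stability does not address. Nothing further is needed.
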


\begin{cor}[Semistability of direct sum of line bundles]\label{cor:sstabds}
  Let $X$ be a normal, $\bQ$-factorial, projective variety.  If $\sH ∈ \Pic(X)$
  is any line bundle and $r ∈ \bN^+$ any number, then $\sH^{\oplus r}$ is
  semistable with respect to any movable class.  \qed
\end{cor}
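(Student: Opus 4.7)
The plan is to first reduce the claim to a computation involving determinants, and then use the wedge-power trick that works in the classical setting, verifying it goes through in the $\bQ$-factorial set-up.

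First, I would observe that $\det (\sH^{\oplus r}) \cong \sH^{\otimes r}$, so that $\mu_α(\sH^{\oplus r}) = [\sH] · α$. To establish semistability, let $\sF ⊆ \sH^{\oplus r}$ be any non-zero coherent subsheaf of rank $s$. The inclusion $\sF \into \sH^{\oplus r}$ taken to the $s$-th exterior power yields a non-zero map $\wedge^s \sF \to \wedge^s \sH^{\oplus r} \cong \bigl(\sH^{\otimes s}\bigr)^{\oplus N}$, where $N := \binom{r}{s}$. Double-dualising produces a non-zero morphism $\det \sF \to \bigl(\sH^{\otimes s}\bigr)^{\oplus N}$, and projecting onto an appropriate summand yields a non-zero morphism $\det \sF \to \sH^{\otimes s}$, hence a non-zero global section of the Weil divisorial sheaf $\sH^{\otimes s} \otimes (\det \sF)^{[-1]}$.

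Such a section defines an effective Weil divisor $E$ on $X$ with class $[E] = s·[\sH] - [\sF]$ in $N_{\bQ\Div}(X) ⊆ N¹(X)_{\bR}$, using Theorem~\ref{thm:finiteQCart} to make sense of all classes in the $\bQ$-factorial setting. Since $α$ is movable, Definition~\ref{defn:movable} gives $[E] · α ≥ 0$, hence $[\sF] · α ≤ s ·[\sH] · α$. Dividing by $s$ yields $μ_α(\sF) ≤ [\sH] · α = μ_α(\sH^{\oplus r})$, as required.

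The only genuine obstacle is ensuring that the wedge-power construction produces a well-defined section of an actual Weil divisorial sheaf in the normal, $\bQ$-factorial (but possibly singular) setting, so that the movability of $α$ can be invoked via the pairing in \eqref{eq:pairing}. This is handled by passing to double duals and applying Theorem~\ref{thm:finiteQCart}, which ensures that the determinant sheaves involved have well-defined classes in $N¹(X)_{\bR}$.
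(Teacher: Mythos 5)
Your argument is correct. The paper states this corollary with no written proof (it is left as an exercise following the ``subbundle of equal rank'' proposition), so there is no official argument to compare against; but your wedge-power computation is exactly the determinant argument the authors themselves deploy later, in the proof of Theorem~\ref{thm:numGrothendieck}, where a subsheaf $\sF \subseteq \sO_X^{\oplus r}$ is shown to have $\det \sF \cong \sO_X(-D)$ with $D$ effective. All the individual steps check out: the map $\wedge^s\sF \to \wedge^s\bigl(\sH^{\oplus r}\bigr)$ is generically injective, hence non-zero; a map into a locally free (hence reflexive) sheaf factors through the double dual, giving a non-zero morphism $\det\sF \to \sH^{\otimes s}$ after projecting to a suitable summand; and the resulting effective Weil divisor pairs non-negatively with $\alpha$. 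The only pedantic point is that Definition~\ref{defn:movable} requires effective \emph{Cartier} divisors, whereas your $E$ is a priori only an effective Weil divisor; this is repaired by $\bQ$-factoriality (replace $E$ by a Cartier multiple $mE$), exactly as the paper does implicitly in the proof of the ``subbundle of equal rank'' proposition. A marginally shorter route, closer to the corollary's placement in the text, is induction on $r$ via a projection $\sH^{\oplus r}\to\sH$, using the ``equal rank'' proposition for the base case and additivity of determinants for the inductive step; both routes are equally valid, and yours has the advantage of directly producing the effective divisor on which movability is tested.
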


The following proposition follows from elementary Chern class computations,
which we leave to the reader.

\begin{prop}[Morphism from a semistable sheaf]\label{prop:mfss}
  Let $X$ be a normal, $\bQ$-factorial, projective variety and $α ∈ \Mov(X)$.
  If $\sF$ is any α-semistable sheaf and $γ : \sF → \sE$ any morphism of
  torsion-free $\sO_X$-modules, then $μ_{α}(\Image γ) ≥ μ_{α}(\sF)$.  \qed
\end{prop}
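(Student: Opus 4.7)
The plan is to reduce the statement to semistability applied to the kernel subsheaf, followed by an elementary slope computation on the short exact sequence defined by $\gamma$.

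First I would dispose of the trivial cases. If $\gamma = 0$, the statement is vacuous (the image sheaf is zero and $\mu_\alpha$ is not defined). If $\gamma$ is injective, then $\Image \gamma \cong \sF$ and the desired inequality holds as equality. So I may assume $\gamma \neq 0$ and that the kernel $\sK := \ker \gamma \subseteq \sF$ is a nonzero proper coherent subsheaf.

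Next, I would set up the short exact sequence
\begin{equation*}
  0 \to \sK \to \sF \to \Image \gamma \to 0.
\end{equation*}
Taking determinants yields $\det \sF \cong \det \sK \otimes \det(\Image \gamma)$ (as Weil divisorial sheaves on the $\bQ$-factorial variety $X$), so the associated numerical classes satisfy $[\sF] = [\sK] + [\Image \gamma]$ in $N^1(X)_{\bR}$. Together with the rank identity $\rank \sF = \rank \sK + \rank \Image \gamma$, pairing against $\alpha$ gives
\begin{equation*}
  [\Image \gamma] \cdot \alpha = [\sF] \cdot \alpha - [\sK] \cdot \alpha, \qquad \rank \Image \gamma = \rank \sF - \rank \sK.
\end{equation*}

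The decisive step is $α$-semistability applied to the subsheaf $\sK \subseteq \sF$, which gives $\mu_\alpha(\sK) \le \mu_\alpha(\sF)$, equivalently
\begin{equation*}
  [\sK] \cdot \alpha \cdot \rank \sF \;\le\; [\sF] \cdot \alpha \cdot \rank \sK.
\end{equation*}
A short arithmetic manipulation (writing $a = [\sF] \cdot \alpha$, $b = [\sK] \cdot \alpha$, $r = \rank \sF$, $s = \rank \sK$, and using $0 < r - s \leq r$) converts the inequality $br \le as$ into $(a - b)/(r - s) \ge a/r$, which is precisely $\mu_\alpha(\Image \gamma) \ge \mu_\alpha(\sF)$.

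There is no real obstacle here: the entire argument is the textbook slope calculation, and the movable-class setting does not change anything because the slope function is linear on numerical classes and semistability is defined against \emph{all} subsheaves, saturated or not. The only small care needed is to ensure that the classes $[\sK]$ and $[\Image \gamma]$ are well-defined elements of $N^1(X)_{\bR}$, which is guaranteed by the $\bQ$-factoriality assumption on $X$.
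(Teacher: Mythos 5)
Your proof is correct and is exactly the ``elementary Chern class computation'' that the paper leaves to the reader: additivity of determinant classes and ranks along $0 \to \ker\gamma \to \sF \to \Image\gamma \to 0$, plus semistability applied to the kernel. The edge cases ($\gamma$ injective, and the fact that torsion-freeness forces $0 < \rank\ker\gamma < \rank\sF$ in the remaining case) are handled properly, so nothing is missing.
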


\begin{cor}[Morphisms between semistable sheaves]\label{cor:msss}
  Let $X$ be a normal, $\bQ$-factorial, projective variety and $α ∈ \Mov(X)$.
  \begin{itemize}
  \item If $\sF$ and $\sE$ are semistable and if $μ_{α}(\sF) > μ_{α}(\sE)$, then
    $\Hom_{\sO_X} \bigl(\sF, \,\sE \bigr) = 0$.

  \item If $\sF$ and $\sE$ are stable and if $μ_{α}(\sF) = μ_{α}(\sE)$, then any
    non-zero morphism $\sF → \sE$ is injective and generically isomorphic.  If
    we assume in addition that $\sF$ is saturated\footnote{Recall that $\sF$ is
      said to be saturated in $\sE$ if the quotient $\sE/\sF$ is torsion-free.}
    in $\sE$, then any non-zero morphism $\sF → \sE$ is an isomorphism. \qed
  \end{itemize}
\end{cor}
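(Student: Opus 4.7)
The plan is to deduce both items from Proposition~\ref{prop:mfss} combined with the elementary fact that the slope is a rank-weighted convex combination along short exact sequences of coherent sheaves (this follows from additivity of $c_1$ in short exact sequences and linearity of $\bullet \cdot α$). No genuinely new ingredient is required, only careful bookkeeping of strict versus non-strict inequalities.

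For the first bullet, I would argue by contradiction: suppose $γ:\sF → \sE$ is non-zero. Applying Proposition~\ref{prop:mfss} to the semistable sheaf $\sF$ yields $μ_α(\Image γ) ≥ μ_α(\sF)$, while semistability of $\sE$ applied to the non-zero subsheaf $\Image γ ⊆ \sE$ yields $μ_α(\Image γ) ≤ μ_α(\sE)$. Chaining gives $μ_α(\sF) ≤ μ_α(\sE)$, contradicting the hypothesis.

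For the second bullet, let $γ ≠ 0$ and set $\sK := \ker γ$. Since $\Image γ$ embeds into the torsion-free sheaf $\sE$ and is non-zero, it has positive rank, so $\rank \sK < \rank \sF$. If $\sK$ were non-zero, then $\rank \sK > 0$ ($\sF$ being torsion-free), and $α$-stability of $\sF$ would give the strict inequality $μ_α(\sK) < μ_α(\sF)$; writing $μ_α(\sF)$ as a convex combination of $μ_α(\sK)$ and $μ_α(\Image γ)$ along the short exact sequence $0 → \sK → \sF → \Image γ → 0$ with positive weights, this would force $μ_α(\Image γ) > μ_α(\sF) = μ_α(\sE)$, contradicting semistability of $\sE$. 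Hence $\sK = 0$ and $γ$ is injective. Then $γ$ realises $\sF$ as a subsheaf of $\sE$ with $μ_α(\sF) = μ_α(\sE)$; stability of $\sE$ therefore rules out $\rank \sF < \rank \sE$, so the ranks agree and $γ$ is generically an isomorphism. For the final assertion, if $\sF$ is saturated in $\sE$, then the quotient $\sE/γ(\sF)$ is torsion-free of rank zero, hence zero, so $γ$ is surjective and, combined with injectivity, an isomorphism.

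The only real obstacle is the injectivity step: one must check that $\rank \sK$ is \emph{strictly} positive and \emph{strictly} less than $\rank \sF$ in order to invoke the strict inequality from the definition of stability of $\sF$, and then correctly translate the strict slope inequality on the kernel into a strict inequality on the image via the convex combination formula. Everything else is mechanical.
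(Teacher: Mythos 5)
Your proof is correct and follows exactly the route the paper intends: the corollary is stated with an omitted proof as an immediate consequence of Proposition~\ref{prop:mfss} together with the additivity of $[\,\cdot\,]$ and rank along short exact sequences, which is precisely what you use, and your bookkeeping of strict versus non-strict inequalities (in particular that $\ker γ$, if non-zero, is a proper subsheaf of strictly smaller rank, so stability of $\sF$ applies) is accurate.
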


\subsection{A generalisation of Mehta-Ramanathan's theorem}
\approvals{
  Daniel & yes \\
  Stefan & yes \\
  Thomas & yes
}

This section contains a minor generalisation of the classical theorem of
Mehta-Ramanathan.

\begin{prop}\label{MRbig}
  Let $π: \wtilde X → X$ be a birational morphism of normal, $\bQ$-factorial,
  projective varieties of dimension $n ≥ 2$ with only canonical singularities.
  Let $H$ be an ample divisor on $X$ and $\wtilde H := π^* H$.  Let $\wtilde
  \sE$ be a torsion-free, $\wtilde H$-semistable sheaf on $\wtilde X$.  If $m
  \gg 0$ is sufficiently large and $\wtilde D_1, …, \wtilde D_{n-2} ∈ |m \wtilde
  H|$ is a general $(n-2)$-tuple of hypersurfaces with associated complete
  intersection surface $\wtilde S =\wtilde D_1 ∩ … ∩ \wtilde D_{n-2}$, then
  $\wtilde \sE|_{\wtilde S}$ is $\wtilde H|_{\wtilde S}$-semistable.
\end{prop}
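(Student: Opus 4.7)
The plan is to push $\wtilde\sE$ down to $X$, invoke the classical Mehta-Ramanathan theorem there, and then transfer the resulting semistability back up to $\wtilde X$ using Proposition~\ref{prop:invariance}.

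First, set $\sE := (\pi_*\wtilde\sE)^{**}$ on $X$. This is a reflexive sheaf of the same rank as $\wtilde\sE$, and away from the $\pi$-exceptional locus $\pi^*\sE$ coincides with $\wtilde\sE$. Consequently, Proposition~\ref{prop:invariance} implies that $\sE$ is semistable with respect to the movable class $H^{n-1}$, i.e., $H$-semistable in the classical sense. Moreover, canonical singularities are rational, so the projection formula yields $H^0(\wtilde X, m\wtilde H) = H^0(X, mH)$ for every $m$; hence every $\wtilde D_i \in |m\wtilde H|$ equals $\pi^*D_i$ for a unique $D_i \in |mH|$, and $\wtilde S$ is the pull-back of $S := D_1 \cap \cdots \cap D_{n-2}$.

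For $m \gg 0$ and general $D_1, \ldots, D_{n-2}$, the classical Mehta-Ramanathan theorem (valid in the normal, $\bQ$-factorial generality by Flenner's extension) gives that $\sE|_S$ is $H|_S$-semistable on $S$. Standard Bertini arguments coupled with inversion of adjunction ensure that for generic choices both $S$ and $\wtilde S$ are normal projective surfaces with at worst canonical (Du Val) singularities, and are therefore automatically $\bQ$-factorial; and the restriction $\pi_S := \pi|_{\wtilde S}: \wtilde S \to S$ is then a birational morphism between normal, $\bQ$-factorial surfaces whose exceptional set sits inside the restriction of the $\pi$-exceptional locus. A second application of Proposition~\ref{prop:invariance} to $\pi_S$ transfers the $H|_S$-semistability of $\sE|_S$ back to the desired $\wtilde H|_{\wtilde S}$-semistability of $\wtilde\sE|_{\wtilde S}$.

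The main obstacle will be the geometric regularity of the cutting-down surfaces: one must check that, for $m \gg 0$ and generic choices, the scheme-theoretic complete intersections $S$ and $\wtilde S$ are simultaneously normal and $\bQ$-factorial, and carry torsion-free restrictions of $\sE$ and $\wtilde\sE$ whose behaviour across $\pi_S$ matches the hypotheses of Proposition~\ref{prop:invariance}. When $\codim_X \sing X \geq 3$, a generic $S$ avoids $\sing X$ altogether and both difficulties evaporate; only in the borderline case $\codim_X \sing X = 2$ does one have to handle the isolated points of $S \cap \sing X$ and their positive-dimensional $\pi$-fibres, which requires a little extra care but no new ideas.
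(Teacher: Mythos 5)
Your proposal is correct and follows essentially the same route as the paper: push $\wtilde\sE$ down to $X$, use Proposition~\ref{prop:invariance} to see that the push-forward is $H$-semistable, identify $|m\wtilde H|$ with $|mH|$ so that $\wtilde S = π^{-1}(S)$, apply Flenner's version of Mehta--Ramanathan to $\sE|_S$, and transfer the conclusion back via a second application of Proposition~\ref{prop:invariance} to $π|_{\wtilde S}$. The regularity issues you flag at the end are handled in the paper by citing that general complete intersection surfaces in a canonical variety again have canonical singularities and are hence $\bQ$-factorial, together with torsion-freeness of the restrictions, so no extra case distinction on $\codim_X \sing X$ is needed.
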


\begin{rem}
  In the setting of Proposition~\ref{MRbig}, recall from \cite[Prop.~5.1]{GKP13}
  that the sheaf $\wtilde \sE|_{\wtilde S}$ is torsion-free.  The surface
  $\wtilde S$ has canonical singularities, \cite[Lem.~5.7]{KM98}, and is
  therefore $\bQ$-factorial, \cite[Prop.~4.11]{KM98}.  The assertion that
  $\wtilde \sE|_{\wtilde S}$ is $\wtilde H|_{\wtilde S}$-semistable therefore
  makes sense.
\end{rem}

\begin{proof}[Proof of Proposition~\ref{MRbig}]
  Observe that the torsion-free coherent sheaf $\sE := π_*(\wtilde \sE)$ is
  $H$-semistable by Proposition~\ref{prop:invariance}.  Next, write $\wtilde D_i
  = π^*(D_i)$ and set $S := D_1 ∩ … D_{n-2}$ so that $\wtilde S = π^{-1}(S)$.
  Since $X$ has only canonical singularities, conclude as before that $S$ is
  $\bQ$-factorial.  Also, observe that $\sE|_S$ is torsion-free.  By Flenner's
  version of the Mehta-Ramanathan theorem, \cite[Thm.~1.2]{Flenner84}, the
  torsion free sheaf $\sE|_S$ is $H|_S$-semistable.  Since $\wtilde
  \sE|_{\wtilde S}$ and $(π|_{\wtilde S})^*(\sE|_S)$ agree outside the
  exceptional locus of $π|_{\wtilde S}$, Proposition~\ref{prop:invariance}
  applies, showing that $\wtilde \sE|_{\wtilde S}$ is $\wtilde H|_{\wtilde
    S}$-semistable.
\end{proof}

\subsection{Boundedness I: Suprema of slopes in a given bundle}
\label{ssec:ssgb}
\approvals{
  Daniel & yes \\
  Stefan & yes \\
  Thomas & yes
}

Given a torsion-free sheaf $\sE$ and a movable class $α$, the $α$-slope of
subsheaves $\sF ⊆ \sE$ cannot be arbitrarily large.  The following boundedness
results will be used later.

\begin{defn}[Suprema of slopes]\label{def:mumax}
  Let $X$ be a normal, $\bQ$-factorial, projective variety and $α ∈ \Mov(X)$.
  If $\sE$ is any torsion-free, coherent sheaf of positive rank on $X$, write
  $$
  μ_{α}^{\max} (\sE) := \sup \left\{ μ_{α}(\sF) \,|\, \text{$ 0 \ne \sF ⊆ \sE$ a coherent subsheaf} \right\}.
  $$
\end{defn}

\begin{prop}[Boundedness of $μ_{α}^{\max}$]\label{prop:bounded}
  In the setting of Definition~\ref{def:mumax}, the function
  $$
  m : \Mov(X) → \bR ∪ \{ \infty \}, \qquad β \mapsto μ_{β}^{\max}(\sE)
  $$
  is bounded from above by a linear function $M$.  In particular, $μ_{α}^{\max}
  (\sE) < \infty$.
\end{prop}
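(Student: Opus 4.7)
The plan is to exploit the duality between the movable cone $\Mov(X)$ and the pseudo-effective cone of divisor classes: by Definition~\ref{defn:movable} and Remark~\ref{rem:BDPP}, a class $D ∈ N¹(X)_{\bR}$ pairs non-negatively with every $β ∈ \Mov(X)$ if and only if $D$ is pseudo-effective. Under this duality, a linear upper bound on $m$ amounts to exhibiting a single divisor class that dominates, in the pseudo-effective ordering, the first Chern class of every potentially slope-maximising saturated subsheaf of $\sE$.

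First, I would apply Corollary~\ref{cor:slopesat} to restrict the supremum defining $μ_β^{\max}(\sE)$ to saturated subsheaves $\sF ⊆ \sE$. Since their ranks take values in the finite set $\{1, \ldots, \rank \sE\}$, it suffices to bound $μ_β(\sF)$ uniformly over saturated $\sF$ of each fixed rank $r$.

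Next, for each such $r$, I would produce a class $\Xi_r ∈ N¹(X)_{\bR}$ with the property that $\Xi_r - [\sF]$ is pseudo-effective for every saturated rank-$r$ subsheaf $\sF ⊆ \sE$. The method is to pass to a resolution of singularities $π \colon \wtilde{X} → X$ and to $\wtilde{\sE} := π^{[*]} \sE$. Proposition~\ref{prop:invariance} identifies saturated rank-$r$ subsheaves of $\sE$, up to modification along the exceptional locus of $π$, with saturated rank-$r$ subsheaves of $\wtilde{\sE}$; the determinant of each such subsheaf embeds as a rank-one subsheaf of $\bigwedge^r \wtilde{\sE}$. Classical boundedness---via the Quot scheme on the smooth variety $\wtilde{X}$---then yields a uniform upper bound on these first Chern classes in the pseudo-effective ordering on $N¹(\wtilde{X})_{\bR}$, and pushing forward by $π_*$, which sends pseudo-effective classes to pseudo-effective classes, produces the required $\Xi_r$ on $X$.

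Finally, to assemble the linear bound, fix any ample class $A ∈ N¹(X)_{\bR}$ and pick $k ∈ \bN$ large enough that $k A - \Xi_r/r$ is ample (hence pseudo-effective) for every $r ∈ \{1, \ldots, \rank \sE\}$; this is possible because the ample cone is open in $N¹(X)_{\bR}$ and the index set is finite. Then for any $β ∈ \Mov(X)$ and any saturated $\sF$ of rank $r$, both pairings $β · (\Xi_r - [\sF])$ and $β · (k A - \Xi_r/r)$ are non-negative, so
$$
μ_β(\sF) \;=\; \frac{[\sF] · β}{r} \;≤\; \frac{\Xi_r · β}{r} \;≤\; k A · β.
$$
Setting $M(β) := k A · β$ gives the desired linear upper bound, and the in-particular statement $μ_α^{\max}(\sE) < \infty$ follows immediately. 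The main obstacle is the uniform psef-bound of the preceding paragraph: the one-directional estimate $[\wtilde{\sF}] · \wtilde{H}^{n-1} \leq r · μ_{\wtilde{H}}^{\max}(\wtilde{\sE})$ is immediate from classical theory, but upgrading it to a bound in every pseudo-effective direction of $N¹(\wtilde{X})_{\bR}$ relies on the full strength of Grothendieck's boundedness theorem for Quot schemes.
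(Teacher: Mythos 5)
The architecture of your argument is sound and the final assembly (duality between $\Mov(X)$ and the pseudo-effective cone, comparison with a large multiple of an ample class) is fine, but the step you yourself flag as the main obstacle is not delivered by the tool you invoke. Grothendieck's boundedness theorem bounds the family of quotients with fixed Hilbert polynomial, equivalently the family of saturated subsheaves whose slope with respect to a fixed ample class is bounded \emph{from below}. The family of \emph{all} saturated rank-$r$ subsheaves of $\wtilde{\sE}$ carries no such lower bound and is in general unbounded (already $\sO(-d) \subset \sO^{\oplus 2}$ on $\bP^2$, $d \to \infty$), so no Quot-scheme argument hands you the class $\Xi_r$. What saves your intermediate claim is that you only need an upper bound in the pseudo-effective ordering, and that is elementary: choose an embedding $\bigwedge^r \wtilde{\sE} \into \sH^{\oplus N}$ with $\sH$ ample; the determinant of any saturated rank-$r$ subsheaf is a torsion-free rank-one subsheaf of $\bigwedge^r \wtilde{\sE}$, hence maps non-trivially (so injectively) to some summand $\sH$, whence $[\sH] - [\sF]$ is effective. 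So $\Xi_r := [\sH]$ works --- but the justification is the embedding, not a boundedness theorem.

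Once this is seen, the entire scaffolding (resolution, exterior powers, rank-by-rank treatment, push-forward of pseudo-effective classes, comparison with $kA$) is unnecessary. The paper's proof is the embedding trick applied once, directly on $X$ and uniformly in the rank: fix $\sE \into \sH^{\oplus r}$ with $\sH$ sufficiently ample and $r = \rank \sE$; by Corollary~\ref{cor:sstabds} the sheaf $\sH^{\oplus r}$ is semistable with respect to \emph{every} movable class, so every subsheaf $\sF \subseteq \sE \subseteq \sH^{\oplus r}$ satisfies $\mu_{\beta}(\sF) \leq \mu_{\beta}\bigl(\sH^{\oplus r}\bigr) = [\sH] \cdot \beta$, and $M(\beta) := [\sH] \cdot \beta$ is the desired linear bound.
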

\begin{proof}
  Choose a sufficiently ample line bundle $\sH$ and an embedding $\sE \into
  \sH^{\oplus r}$, where $r := \rank \sE$.  If $\sF ⊆ \sE$ is any coherent
  subsheaf, it follows from Corollary~\ref{cor:sstabds} that $μ_{α}(\sF) ≤
  μ_{α}(\sH^{\oplus r}) = [\sH] · α$ for every $α ∈ \Mov (X)$.  We can therefore
  take $M(β) := [\sH] · β$ as the desired linear function.
\end{proof}

\begin{prop}[Existence of subsheaves with maximal slope, I]\label{prop:eswms}
  In the setting of Definition~\ref{def:mumax}, the supremum $μ_{α}^{\max}$ is a
  maximum.  In other words, there exists a non-zero, coherent subsheaf $\sF ⊆
  \sE$ such that $μ_{α}^{\max} (\sE) = μ_{α}(\sF) < \infty$.
\end{prop}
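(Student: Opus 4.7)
My plan is to reduce the problem to a finiteness statement about numerical classes of subsheaves of a fixed rank, and to handle the ``easy'' case via a Noetherian ascending-chain argument. Since the rank of a coherent subsheaf $\sF\subseteq\sE$ lies in the finite set $\{1,\dots,\rank\sE\}$, it suffices to prove, for each fixed $r$, that the restricted supremum
\[
\mu_\alpha^{\max,r}(\sE):=\sup\bigl\{\mu_\alpha(\sF):0\neq\sF\subseteq\sE,\ \rank\sF=r\bigr\}
\]
is attained whenever rank-$r$ subsheaves of $\sE$ exist. By Corollary~\ref{cor:slopesat}, one may further restrict attention to saturated subsheaves.

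Fix $r$ and choose saturated rank-$r$ subsheaves $\sF_n\subseteq\sE$ with $\mu_\alpha(\sF_n)\to\mu_\alpha^{\max,r}(\sE)=:\mu_0$. The partial sums $\sG_n:=\sF_1+\cdots+\sF_n$ form an ascending chain of coherent subsheaves of $\sE$, which stabilises by Noetherianity at some $\sG\subseteq\sE$; in particular, $\sF_n\subseteq\sG$ for $n\gg 0$. If $\rank\sG=r$, the argument concludes cleanly: for such $n$ the quotient $\sG/\sF_n$ embeds as a rank-zero subsheaf of the torsion-free sheaf $\sE/\sF_n$ (torsion-free because $\sF_n$ is saturated in $\sE$), hence vanishes. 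Therefore $\sF_n=\sG$ for $n\gg 0$; the sequence of slopes is eventually constant equal to $\mu_\alpha(\sG)$, and $\sG$ attains the supremum $\mu_0$.

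The remaining case $\rank\sG>r$ is the main obstacle and requires an honest boundedness input. The key statement I would aim to prove is that, for every $C\in\bR$, the set
\[
\bigl\{[\sF]\in N^1(X)_\bR : \sF\subseteq\sE\text{ saturated of rank }r,\ \mu_\alpha(\sF)\geq C\bigr\}
\]
is finite. Discreteness follows from Theorem~\ref{thm:finiteQCart}, which places these classes in the lattice $N_{\bQ\Div}(X)$. Boundedness in $N^1(X)_\bR$ is the technical heart of the matter and calls for a Grothendieck-style argument adapted to movable polarisations: the embedding $\sE\into\sH^{\oplus\rank\sE}$ from the proof of Proposition~\ref{prop:bounded}, combined with Corollary~\ref{cor:sstabds} and Remark~\ref{rem:BDPP}, forces $r[\sH]-[\sF]$ to be pseudo-effective, giving one-sided control on $[\sF]$. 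A matching bound from the other side is more delicate, since a uniform embedding of the quotients $\sE/\sF$ is not available a priori; this may require an auxiliary ample polarisation together with classical Grothendieck boundedness, or a Mehta-Ramanathan-style restriction to reduce dimension. Once finiteness of numerical classes is established, only finitely many slopes can lie above $C$, so the supremum is attained.
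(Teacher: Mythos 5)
Your reduction to fixed rank and the Noetherian chain argument in the case $\rank\sG=r$ are fine, but the case you yourself flag as ``the main obstacle'' is where the entire proof lives, and the key lemma you propose to prove there --- finiteness of the set of numerical classes $[\sF]$ of saturated rank-$r$ subsheaves with $\mu_{\alpha}(\sF)\geq C$ --- is not only left unproved but is \emph{false} in the generality required. Proposition~\ref{prop:eswms} is stated for an arbitrary $\alpha\in\Mov(X)$, including classes on the boundary of the movable cone; this generality is essential later (for instance, pull-backs $\pi^*\alpha$ under resolutions always land on the boundary). The paper's Grothendieck-type finiteness result, Theorem~\ref{thm:numGrothendieck}, requires $\beta$ to be \emph{big}, and for good reason: on a ruled surface $\pi:X\to C$ with fibre class $\alpha=[F]$ and $\sE=\sO_X^{\oplus 2}$, two sections of $\sO_X(nF)$ with disjoint zero divisors give for each large $n$ a surjection $\sO_X^{\oplus 2}\onto\sO_X(nF)$ whose kernel is a saturated line subbundle of class $-n[F]$ and slope $-nF\cdot F=0=\mu_{\alpha}^{\max}(\sE)$. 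Thus infinitely many numerical classes occur at the supremum itself, and your finiteness statement fails for every threshold $C\leq 0$. Any completion of your argument along these lines would have to assume $\alpha$ big, which does not suffice for the proposition.

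The paper avoids boundedness of numerical classes altogether. It argues by contradiction: if the supremum is not attained, one chooses a sequence of \emph{saturated} subsheaves $\sF_i$ of constant rank $r$ with $\mu_{\alpha}(\sF_i)\to\mu_{\alpha}^{\max}(\sE)$, and takes $r$ to be \emph{maximal} among all ranks for which such a sequence exists. Two distinct saturated subsheaves of equal rank have a sum of strictly larger rank, and an elementary Chern-class computation for the slope of a sum of two subsheaves, \cite[Lemma~A.12]{ExtApplications}, shows that $\mu_{\alpha}(\sF_i+\sF_j)$ can be brought within any $\varepsilon$ of the supremum by choosing $i$, $j$ suitably. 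This produces, for every $\varepsilon>0$, subsheaves of rank strictly greater than $r$ and slope at least $\mu_{\alpha}^{\max}(\sE)-\varepsilon$, contradicting the maximality of $r$. If you want to keep your structure, this sum-of-two-subsheaves estimate is exactly the ``matching bound from the other side'' you are missing, and it needs no bigness hypothesis on $\alpha$.
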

\begin{proof}
  Argue by contradiction, and assume that $μ_{α}(\sF) < μ_{α}^{\max} (\sE)$ for
  any non-zero, coherent subsheaf $\sF ⊆ \sE$.  One can then find a sequence of
  subsheaves $(\sF_i)_{i ∈ \bN}$ such that the following holds.
  \begin{enumerate}
  \item\label{il:scms1} The sequence of slopes converges, $\lim μ_{α}(\sF_i) =
    μ_{α}^{\max} (\sE)$.
  \item\label{il:scms2} The sheaves $\sF_i$ are saturated in $\sE$.  All $\sF_i$
    have the same rank $r$.
  \end{enumerate}
  In addition, we can assume that the rank $r$ is maximal among all sequences of
  sheaves satisfying \ref{il:scms1}--\ref{il:scms2}.  We will construct a
  contradiction by proving the following.

  \begin{quotation}
    Given any number $ε > 0$, there exists a subsheaf $\sG_{ε} ⊆ \sE$ such that
    $μ_{α}(\sG_{ε}) ≥ μ_{α}^{\max} (\sE) - ε$ and such that $\rank \sG_{ε} > r$.
  \end{quotation}

  \smallskip

  To this end, let $ε$ be any given number.  Then, there exists an index $i$
  such that $μ_{α}(\sF_i)
  > μ_{α}^{\max} (\sE) - \frac{ε}{2}$.  There exists a larger index $j > i$ such
  that $μ_{α}(\sF_i) < μ_{α}(\sF_j)$.  The subsheaves $\sF_i$ and $\sF_j$ are
  certainly not equal.  Since both $\sF_i$ and $\sF_j$ are saturated in $\sE$,
  their sum $\sG_{ε} := \sF_i + \sF_j$ therefore has $\rank \sG_{ε} > r$.  A
  rather elementary computation of Chern classes, spelled out in
  \cite[Lemma~A.12]{ExtApplications}, now shows that $μ_{α} \bigl( \sG_{ε}
  \bigr) > μ_{α}^{\max} (\sE) - ε$.
\end{proof}

\begin{rem}
  Any sheaf $\sF ⊆ \sE$ with $μ_{α}^{\max} (\sE) = μ_{α}(\sF)$ is clearly
  semistable.
\end{rem}

\subsection{Existence of the Harder-Narasimhan-filtration}
\approvals{
  Daniel & yes \\
  Stefan & yes \\
  Thomas & yes
}

One consequence of the boundedness result obtained in
Proposition~\ref{prop:eswms} is the existence of a maximally destabilising
subsheaf, which in turn implies the existence of the Harder-Narasimhan and of
Jordan-Hölder-filtrations, even in cases where slope is defined by a movable
curve class.

\begin{cor}[Existence of a unique maximally destabilising sheaf]\label{cor:maxdest}
  In the setting of Definition~\ref{def:mumax}, there exists a sheaf $\sF ⊆ \sE$
  such that the following holds.
  \begin{enumerate}
  \item The slope is maximal, $μ_{α}(\sF) = μ_{α}^{\max} (\sE)$.
  \item If $\sF' ⊆ \sE$ is any other subsheaf with $μ_{α}(\sF) = μ_{α}^{\max}
    (\sE)$, then $\sF' ⊆ \sF$.
  \end{enumerate}
  The sheaf $\sF$ is called ``maximally destabilising subsheaf''.  It is clearly
  unique, semistable, and saturated in $\sE$.
\end{cor}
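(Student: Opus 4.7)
The plan is to assemble the sheaf $\sF$ from Proposition~\ref{prop:eswms}, refine it to be of maximal rank and saturated, and then use a sum-of-subsheaves argument together with a Chern class computation to derive both uniqueness and the containment property.

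First I would invoke Proposition~\ref{prop:eswms} to produce at least one non-zero coherent subsheaf of $\sE$ whose $α$-slope equals $μ_{α}^{\max}(\sE)$. Among all such ``maximal-slope'' subsheaves, the rank is bounded by $\rank \sE$, so I may choose $\sF \subseteq \sE$ attaining the maximum possible rank. Passing to the saturation $\sF_{\sat} \subseteq \sE$ preserves the rank, and by Corollary~\ref{cor:slopesat} we have $μ_{α}(\sF) \leq μ_{α}(\sF_{\sat}) \leq μ_{α}^{\max}(\sE)$, so equality holds throughout. Replacing $\sF$ by $\sF_{\sat}$, I may assume $\sF$ is saturated in $\sE$ and still of maximal rank among maximal-slope subsheaves. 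Semistability of $\sF$ is immediate: any non-zero subsheaf $\sG \subseteq \sF$ satisfies $μ_{α}(\sG) \leq μ_{α}^{\max}(\sE) = μ_{α}(\sF)$.

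The heart of the argument is the uniqueness/containment statement. Let $\sF' \subseteq \sE$ be any other subsheaf with $μ_{α}(\sF') = μ_{α}^{\max}(\sE)$. Consider the short exact sequence
$$
0 \to \sF \cap \sF' \to \sF \oplus \sF' \to \sF + \sF' \to 0,
$$
which gives the rank identity $\rank(\sF + \sF') = \rank \sF + \rank \sF' - \rank(\sF \cap \sF')$ and the analogous numerical identity $[\sF + \sF'] \cdot α = [\sF] \cdot α + [\sF'] \cdot α - [\sF \cap \sF'] \cdot α$ (this is exactly the elementary Chern class computation cited as \cite[Lemma~A.12]{ExtApplications}). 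Using $[\sF \cap \sF'] \cdot α \leq \rank(\sF \cap \sF') \cdot μ_{α}^{\max}(\sE)$, a one-line inequality yields
$$
μ_{α}(\sF + \sF') \geq μ_{α}^{\max}(\sE),
$$
and hence equality. So $\sF + \sF'$ is again a maximal-slope subsheaf.

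Now the rank-maximality of $\sF$ forces $\rank(\sF + \sF') = \rank \sF$. Since $\sF \subseteq \sF + \sF' \subseteq \sE$ is an inclusion of sheaves of the same generic rank, the quotient $(\sF + \sF')/\sF$ is torsion; but $\sF$ is saturated in $\sE$, so this torsion quotient must vanish, giving $\sF + \sF' = \sF$, i.e., $\sF' \subseteq \sF$. This simultaneously proves the containment property and the uniqueness of $\sF$. The main obstacle is really just ensuring that the sum construction does not escape the maximal-slope locus; once the Chern class identity for sums of subsheaves is available, the rest is formal.
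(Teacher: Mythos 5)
Your proof is correct and follows essentially the same route as the paper's: both rest on the observation that the sum of two maximal-slope subsheaves again has maximal slope, combined with a rank argument and saturation in $\sE$. The only differences are organisational --- the paper iterates the sum-and-saturate step (terminating because the rank strictly increases) instead of fixing a maximal-rank representative at the outset, and it justifies the slope of $\sF + \sF'$ via Proposition~\ref{prop:mfss} rather than your explicit Chern class identity, which is the same computation cited as \cite[Lemma~A.12]{ExtApplications} in the proof of Proposition~\ref{prop:eswms}.
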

\begin{proof}
  By Proposition~\ref{prop:eswms}, there exists a saturated sheaf $\sF_1 ⊆ \sE$
  of maximal slope.  If $\sF'$ is any other subsheaf of maximal slope, then
  either $\sF'$ is contained in $\sF_1$, or it follows from
  Proposition~\ref{prop:mfss} that the $α$-slope of $\sF_1 + \sF'$ equals
  $μ_{α}^{\max} (\sE)$.  In the second case, set $\sF_2 := (\sF_1 +
  \sF')_{\operatorname{sat}}$ and observe that $\rank \sF_2 > \rank \sF_1$.
  Iterate this process, in order to construct a strictly increasing sequence of
  sheaves of maximal slope, $\sF_1 \subsetneq \sF_2 \subsetneq \dots ⊆ \sE$.
  The process terminates because the rank increases in each step.
\end{proof}

\begin{cor}[Existence of a stable destabilising sheaf]
  In the setting of Definition~\ref{def:mumax}, there exists an α-stable sheaf
  $\sF ⊆ \sE$ of slope $μ_{α}(\sF) = μ_{α}^{\max} (\sE)$.
\end{cor}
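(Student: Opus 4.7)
The plan is to start with the maximally destabilising subsheaf $\sF_{\max} \subseteq \sE$ provided by Corollary~\ref{cor:maxdest}, which is already known to be $\alpha$-semistable of slope $\mu_\alpha(\sF_{\max}) = \mu_\alpha^{\max}(\sE)$.  The strategy is to descend inside $\sF_{\max}$ and obtain a stable sheaf of the same slope by induction on rank, exploiting the fact that the slope $\mu_\alpha^{\max}(\sE)$ already caps the slope of \emph{any} subsheaf of $\sE$, hence in particular of any subsheaf of $\sF_{\max}$.

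Concretely, if $\sF_{\max}$ is $\alpha$-stable, there is nothing to prove.  Otherwise, Definition~\ref{def:stability} provides a proper subsheaf $\sG \subsetneq \sF_{\max}$ with $\rank \sG < \rank \sF_{\max}$ and $\mu_\alpha(\sG) \geq \mu_\alpha(\sF_{\max}) = \mu_\alpha^{\max}(\sE)$.  The reverse inequality $\mu_\alpha(\sG) \leq \mu_\alpha^{\max}(\sE)$ follows from the very definition of $\mu_\alpha^{\max}$, since $\sG$ is a non-zero subsheaf of $\sE$.  Hence $\mu_\alpha(\sG) = \mu_\alpha^{\max}(\sE)$.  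The same argument, applied to any non-zero $\sH \subseteq \sG \subseteq \sE$, yields $\mu_\alpha(\sH) \leq \mu_\alpha^{\max}(\sE) = \mu_\alpha(\sG)$, so that $\sG$ is itself $\alpha$-semistable of maximal slope.

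Iterating this construction produces a strictly descending chain
$$
\sF_{\max} = \sF^{(0)} \supsetneq \sF^{(1)} \supsetneq \sF^{(2)} \supsetneq \cdots
$$
of $\alpha$-semistable subsheaves of $\sE$, all of slope $\mu_\alpha^{\max}(\sE)$, whose ranks strictly decrease at every step.  Since ranks are positive integers, the process terminates after finitely many steps, and by construction the final term is an $\alpha$-stable subsheaf of $\sE$ whose slope equals $\mu_\alpha^{\max}(\sE)$.

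There is no serious obstacle in this argument; the only point that requires a brief justification is that stability is inherited in the correct direction during the descent (i.e.\ that every subsheaf encountered remains semistable of maximal slope), but this is immediate from the ambient bound $\mu_\alpha^{\max}(\sE)$ already established in Proposition~\ref{prop:bounded} and Corollary~\ref{cor:maxdest}.
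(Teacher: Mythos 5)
Your proposal is correct and follows essentially the same argument as the paper: if the current maximal-slope subsheaf is not stable, the negation of Definition~\ref{def:stability} produces a smaller-rank subsheaf whose slope is again $μ_{α}^{\max}(\sE)$, and the descent terminates because ranks strictly decrease. The only cosmetic difference is that you seed the induction with the maximally destabilising subsheaf from Corollary~\ref{cor:maxdest}, whereas the paper starts from an arbitrary saturated maximal-slope subsheaf provided by Proposition~\ref{prop:eswms}.
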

\begin{proof}
  By Proposition~\ref{prop:eswms}, there exists a saturated, $α$-semistable
  sheaf $\sF_1 ⊆ \sE$ of maximal slope.  If $\sF_1$ is not stable, there exists
  a sheaf $\sF_2 \subsetneq \sF_1$ that is also of maximal slope, but of smaller
  rank: $\rank \sF_2 < \rank \sF_1$.  Iterate this process, in order to
  construct a strictly decreasing sequence of sheaves of maximal slope,
  $\sF_1 \supsetneq \sF_2 \supsetneq \cdots$.  The process terminates because
  the rank decreases in each step.
\end{proof}

\begin{cor}[Existence of the Harder-Narasimhan-filtration]
  In the setting of Definition~\ref{def:mumax}, there exists a unique
  ``Harder-Narasimhan-filtration'', that is, a filtration
  $0 = \sE_0 \subsetneq \sE_1 \subsetneq \cdots \subsetneq \sE_r = \sE$ where
  each quotient $\sQ_i := \factor{\sE_i}{\sE_{i-1}}$ is torsion-free,
  α-semistable, and where the sequence of slopes $μ_{α} \bigl( \sQ_i \bigr)$ is
  strictly decreasing.  \qed
\end{cor}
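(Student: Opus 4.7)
The plan is to construct the filtration inductively using the maximally destabilising subsheaf of Corollary~\ref{cor:maxdest} and to derive strict decrease and uniqueness from its characterising universal property.

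For existence, I would set $\sE_0 := 0$ and, assuming $\sE_{i-1} \subsetneq \sE$ has been constructed, apply Corollary~\ref{cor:maxdest} to the torsion-free sheaf $\sE/\sE_{i-1}$ (torsion-freeness follows because $\sE_{i-1}$ will inductively be saturated in $\sE$) to obtain a unique maximally destabilising subsheaf $\sQ_i \subseteq \sE/\sE_{i-1}$, and define $\sE_i$ to be its preimage under $\sE \twoheadrightarrow \sE/\sE_{i-1}$. By construction $\sQ_i = \sE_i/\sE_{i-1}$ is semistable, saturated in $\sE/\sE_{i-1}$, and hence $\sE_i$ is saturated in $\sE$; in particular $\sE/\sE_i$ is torsion-free, so the induction continues. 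The process terminates because the rank strictly increases at each step.

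For the strict decrease of slopes, I would argue by contradiction. Suppose $\mu_\alpha(\sQ_{i+1}) \geq \mu_\alpha(\sQ_i)$. Consider the image of $\sE_{i+1}$ in $\sE/\sE_{i-1}$, which is an extension of $\sQ_i$ by $\sQ_{i+1}$; a direct rank-weighted computation shows its $\alpha$-slope is $\geq \mu_\alpha(\sQ_i) = \mu_\alpha^{\max}(\sE/\sE_{i-1})$. By the characterisation of the maximally destabilising sheaf in Corollary~\ref{cor:maxdest}, this image must be contained in $\sQ_i = \sE_i/\sE_{i-1}$, which forces $\sE_{i+1} \subseteq \sE_i$, contradicting strict inclusion.

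Uniqueness follows a parallel scheme. Given any second filtration $0 = \sE'_0 \subsetneq \cdots \subsetneq \sE'_{r'} = \sE$ with semistable torsion-free quotients $\sQ'_j$ of strictly decreasing slopes, I would show that for any nonzero subsheaf $\sF \subseteq \sE$, projecting $\sF$ successively into the quotients $\sQ'_j$ and applying Proposition~\ref{prop:mfss} together with the semistability of each $\sQ'_j$ yields $\mu_\alpha(\sF) \leq \mu_\alpha(\sQ'_1)$, so $\mu_\alpha(\sQ'_1) = \mu_\alpha^{\max}(\sE)$. The universal property of Corollary~\ref{cor:maxdest} then gives $\sE'_1 \subseteq \sE_1$; the reverse inclusion follows because otherwise $\sE_1$ would project nontrivially to some $\sQ'_j$ with $j \geq 2$, forcing $\mu_\alpha(\sE_1) \leq \mu_\alpha(\sQ'_j) < \mu_\alpha(\sQ'_1)$, a contradiction. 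Hence $\sE_1 = \sE'_1$, and passing to $\sE/\sE_1$ concludes the induction.

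The main technical obstacle is ensuring torsion-freeness of the successive quotients so that Corollary~\ref{cor:maxdest} and Proposition~\ref{prop:eswms} actually apply at each step; this is handled by observing that the maximally destabilising subsheaf is saturated, so the quotients stay torsion-free throughout the construction.
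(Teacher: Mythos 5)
Your proof is correct and follows exactly the route the paper intends: the corollary is stated with a bare \qed precisely because it is the standard iteration of the maximally destabilising subsheaf from Corollary~\ref{cor:maxdest}, which is what you carry out, including the saturation bookkeeping needed to keep the successive quotients torsion-free. One cosmetic point: in the first step of your uniqueness argument the upper bound $μ_{α}(\Image(\sF → \sQ'_j)) ≤ μ_{α}(\sQ'_j)$ comes from the definition of semistability of $\sQ'_j$ (a subsheaf bound), not from Proposition~\ref{prop:mfss}; that proposition is the right tool only for the reverse inclusion, where you do use it correctly.
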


\begin{cor}[Existence of Jördan-Hölder-filtrations]\label{cor:JH}
  In the setting of Definition~\ref{def:mumax}, if $\sE$ is α-semistable, then
  there exists a ``Jordan-Hölder-filtration'', that is, a filtration
  $0 = \sE_0 \subsetneq \sE_1 \subsetneq \cdots \subsetneq \sE_r = \sE$ where
  each quotient $\sQ_i := \factor{\sE_i}{\sE_{i-1}}$ is torsion-free, α-stable,
  and with slopes $μ_{α} \bigl( \sQ_i \bigr) = μ_{α}\bigl( \sE \bigr)$.  \qed
\end{cor}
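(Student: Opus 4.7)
The plan is to proceed by induction on $\rank \sE$, adapting the classical argument to the movable-class setting via the results of the preceding subsections. The base case $\rank \sE = 1$ is immediate: a torsion-free rank-one sheaf has no proper subsheaf of smaller positive rank, so it is vacuously α-stable, and we take $r = 1$, $\sE_1 = \sE$.

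For the inductive step, let $\mathcal{S}$ be the collection of non-zero saturated subsheaves $\sF \subseteq \sE$ with $μ_α(\sF) = μ_α(\sE)$; this set is non-empty (it contains $\sE$ itself), so I choose $\sE_1 \in \mathcal{S}$ of minimal rank. I claim $\sE_1$ is α-stable. Otherwise there exists $\sF' \subsetneq \sE_1$ with $\rank \sF' < \rank \sE_1$ and $μ_α(\sF') \geq μ_α(\sE_1) = μ_α(\sE)$. Since $\sE$ is α-semistable one has $μ_α(\sF') = μ_α(\sE)$, and by Corollary~\ref{cor:slopesat} the saturation $\sF'_{\operatorname{sat}}$ of $\sF'$ inside $\sE$ also has slope $μ_α(\sE)$. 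Hence $\sF'_{\operatorname{sat}} \in \mathcal{S}$ has rank strictly smaller than that of $\sE_1$, contradicting minimality.

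The quotient $\sQ := \sE/\sE_1$ is torsion-free by construction. Additivity of first Chern classes together with $\rank \sE · μ_α(\sE) = \rank \sE_1 · μ_α(\sE_1) + \rank \sQ · μ_α(\sQ)$ yields $μ_α(\sQ) = μ_α(\sE)$. Next, any subsheaf $\sG/\sE_1 \subseteq \sQ$ lifts to $\sE_1 \subseteq \sG \subseteq \sE$; applying the same slope identity to $\sG$ and invoking α-semistability of $\sE$ gives $μ_α(\sG/\sE_1) \leq μ_α(\sQ)$, so $\sQ$ is α-semistable of slope $μ_α(\sE)$. Since $\rank \sQ < \rank \sE$, the induction hypothesis furnishes a Jordan-Hölder filtration of $\sQ$, which pulls back along $\sE \onto \sQ$ to the required filtration of $\sE$.

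The step I expect to be most delicate is verifying that slope is additive along short exact sequences of torsion-free sheaves in the $\bQ$-factorial setting, something that is typically used without comment in the smooth ample case. This works here because Theorem~\ref{thm:finiteQCart} provides well-defined rational numerical classes for determinants of torsion-free sheaves, and the standard additivity of $\det$ on short exact sequences (up to reflexive hulls, which do not affect the numerical class) carries over.
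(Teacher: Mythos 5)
Your argument is correct and is essentially the argument the paper leaves implicit (the corollary is stated with a bare \qed): induct on the rank, peel off a saturated $\alpha$-stable subsheaf $\sE_1$ with $\mu_{\alpha}(\sE_1)=\mu_{\alpha}(\sE)$, and apply the induction hypothesis to the torsion-free quotient $\sE/\sE_1$, whose semistability and slope follow from additivity of determinant classes on the normal, $\bQ$-factorial $X$. Your choice of $\sE_1$ as a \emph{minimal-rank saturated} subsheaf of equal slope is a clean way to get stability and saturation simultaneously, whereas directly invoking the paper's preceding corollary on stable destabilising sheaves would still require checking that saturating the resulting stable subsheaf preserves stability.
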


\begin{rem}[Refined Harder-Narasimhan-filtration]\label{rem:RHNF}
  In the setting of Definition~\ref{def:mumax}, combining Harder-Narasimhan and
  Jordan-Hölder filtrations, one obtains a ``refined
  Harder-Narasimhan-filtration''
  $0 = \sE_0 \subsetneq \sE_1 \subsetneq \cdots \subsetneq \sE_r = \sE$ where
  each quotient $\sQ_i := \factor{\sE_i}{\sE_{i-1}}$ is torsion-free, α-stable,
  and where the sequence of slopes $μ_{α}\bigl( \sQ_i \bigr)$ is decreasing
  (though not necessarily strictly decreasing).  \qed
\end{rem}

\subsection{Boundedness II: Grothendieck's lemma}
\approvals{
  Daniel & yes \\
  Stefan & yes \\
  Thomas & yes
}

Let $X$ be a projective manifold and $\sE$ be any torsion-free sheaf on $X$.  If
$H$ any ample Cartier divisor with associated movable curve class $β := H^{\dim
  X-1}$ and $c$ is any real number, then the classical Grothendieck lemma,
\cite[Lem.~1.7.9]{MR2665168}, asserts that the set of subsheaves with bounded
slope,
$$
S := \{ \, \sF \, \:|\: \sF ⊆ \sE \text{ saturated, positive-rank with }
μ_{β}(\sF) ≥ c \}
$$
forms a bounded family.  In particular, the associated set of numerical classes
is finite.  We show that this conclusion still holds in case where $β$ is an
arbitrary big class.

\begin{thm}[Grothendieck's lemma for numerical classes]\label{thm:numGrothendieck}
  Let $X$ be a normal, projective, $\bQ$-factorial variety and $β ∈ \Mov(X)$ be
  a big class.  Further, let $\sE$ be a torsion-free, coherent sheaf on $X$ and
  $c ∈ \bR$ be a real number.  Then, the following set of numerical classes,
  $$
  S_{\operatorname{num}} := \{ \, [\sF] \, \:|\: \sF ⊆ \sE \text{ any
    positive-rank subsheaf with } μ_{β}(\sF) ≥ c \} ⊆ N¹(X)_{\bQ},
  $$
  is finite.
\end{thm}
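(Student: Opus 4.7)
The plan is to exploit bigness of $β$ directly in the numerical formulation, bypassing any attempt to bound the family of subsheaves as objects, and to reduce finiteness to a bounded-set-in-a-lattice argument. The key observation is that bigness of $β$ means that every direction in $N_1(X)_{\bR}$ can be perturbed against $β$ on \emph{both} sides while remaining movable, which will allow us to extract two-sided bounds on $[\sF] · α$ for arbitrary $α$, not merely for movable $α$.

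First, I would record the upper slope bound of Proposition~\ref{prop:bounded}: there is a linear functional $M$ on $N_1(X)_{\bR}$ with $μ_γ^{\max}(\sE) ≤ M(γ)$ for every $γ ∈ \Mov(X)$. In particular, $[\sF] · γ ≤ r · M(γ)$ for every $\sF ⊆ \sE$, where $r := \rank \sE$. Applying this with $γ = β$, and using the defining condition $μ_β(\sF) ≥ c$, one also sees that the quantity $[\sF] · β$ is pinned between $\min(c, rc)$ and $r M(β)$. Second, since $β$ lies in the interior of $\Mov(X)$, for any class $α ∈ N_1(X)_{\bR}$ there exists $δ_α > 0$ such that both $β + δ_α α$ and $β - δ_α α$ belong to $\Mov(X)$. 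Feeding these two movable classes into the upper bound above and subtracting off the already-controlled term $[\sF] · β$ yields inequalities of the form
$$
\frac{1}{δ_α}\bigl( \min(c, rc) - r\, M(β - δ_α α) \bigr) ≤ [\sF] · α ≤ \frac{1}{δ_α} \bigl( r\, M(β + δ_α α) - \min(c, rc)\bigr),
$$
with constants depending only on $α$, $β$, $c$ and $\sE$, but not on $\sF$.

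To conclude, I would pick a basis $α_1,…,α_ρ$ of $N_1(X)_{\bR}$ and apply the previous step to each $α_i$. The resulting uniform two-sided bounds on the intersection numbers $[\sF] · α_i$ show that the set $S_{\operatorname{num}}$ is contained in a bounded subset of $N¹(X)_{\bR}$ under the pairing \eqref{eq:pairing}. Since $X$ is $\bQ$-factorial, $[\sF]$ lies in $N_{\bQ\Div}(X)$, which is a discrete lattice in $N¹(X)_{\bR}$ by Theorem~\ref{thm:finiteQCart}; a bounded subset of a discrete lattice is finite.

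The main obstacle I anticipate is conceptual rather than technical: one might be tempted to imitate the classical proof of Grothendieck's lemma and first produce boundedness of the \emph{family} of subsheaves, which would require reducing to an ample polarisation via the resolution $π: \wtilde X → X$ and a careful Mehta–Ramanathan-type argument. The observation that bigness of $β$ already supplies two-sided linear-functional bounds, combined with the finite generation statement of Theorem~\ref{thm:finiteQCart}, sidesteps this and yields finiteness of numerical classes directly — which is all that is asserted.
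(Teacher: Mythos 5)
Your proof is correct, and it shares the paper's overall strategy --- show that the classes $[\sF]$ occurring in $S_{\operatorname{num}}$ form a bounded subset of $N^1(X)_{\bR}$ and then invoke the lattice structure of $N_{\bQ\Div}(X)$ from Theorem~\ref{thm:finiteQCart} --- but you implement the boundedness step differently. The paper first reduces, via an embedding $\sE \hookrightarrow \sH^{\oplus r}$ and a twist, to the case $\sE = \sO_X^{\oplus r}$, so that every $[\sF]$ is the negative of the class of an effective Weil divisor; boundedness then follows from the norm-like behaviour of a big class on the pseudo-effective cone (the inequality $D \cdot \beta \geq C \cdot \Vert D \Vert$, which appears as \eqref{eq:dfl3} in the proof of Proposition~\ref{prop:maxsc}). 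You instead keep $\sE$ as it is, use the uniform linear upper bound of Proposition~\ref{prop:bounded} over all of $\Mov(X)$, and exploit interiority of $\beta$ by testing against $\beta \pm \delta_\alpha \alpha$, thereby converting the one-sided bound together with $[\sF]\cdot\beta \geq \min(c, rc)$ into two-sided bounds on $[\sF]\cdot\alpha$ for every $\alpha$. The two uses of bigness are equivalent in substance, but your version avoids having to identify the classes $[\sF]$ as anti-effective and so is slightly more self-contained, while the paper's reduction to $\sO_X^{\oplus r}$ makes the sign information immediate and the write-up shorter. One small point you should make explicit: the passage from $[\sF]\cdot\gamma \leq \rank(\sF) \cdot M(\gamma)$ to $[\sF]\cdot\gamma \leq r \cdot M(\gamma)$ uses $M(\gamma) = [\sH]\cdot\gamma \geq 0$ for $\gamma \in \Mov(X)$, which holds because $\sH$ is ample.
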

\begin{proof}
  As before, choose a sufficiently ample bundle $\sH$ and an embedding $\sE
  \into \sH^{\oplus r}$, where $r := \rank \sE$.  It will then suffice to show
  the claim in case where $\sE = \sH^{\oplus r}$.  Since slopes behave linearly
  under twist, we may even assume without loss of generality that $\sE =
  \sO_X^{\oplus r}$.

  Now, given any subsheaf $\sF ⊆ \sE$ of positive rank, its determinant $\det
  \sF$ embeds into $\sO_X$ and is therefore a Weil divisorial sheaf of the form
  $\det \sF \cong \sO_X(-D)$, where $D$ is an effective, integral Weil divisor.
  Since $β$ is assumed to be a big class, it will therefore intersect $[\sF]$
  negatively, unless $[\sF] = 0$.  The assertion is thus a consequence of
  Theorem~\ref{thm:finiteQCart}.
\end{proof}

%
%
\svnid{$Id: 03.tex 226 2015-03-17 07:46:27Z kebekus $}

\section{Openness of semistability}
\subversionInfo
\label{sec:openness}
\approvals{
  Daniel & yes \\
  Stefan & yes \\
  Thomas & yes }

We now show that stability is an open property, at least within the interior of
the movable cone.  More precisely, we discuss openness properties for the set of
stabilising classes, defined as follows.

\begin{defn}[Stabilising classes]\label{def:sc}
  Let $X$ be a normal, $\bQ$-factorial, projective variety and let $\sE$ be any
  non-trivial, torsion-free sheaf on $X$.  We consider the set of movable
  classes that (semi-)stabilise $\sE$,
  \begin{align*}
    \Stab(\sE) & := \{ α ∈ \Mov(X) \:|\: \sE \text{ is α-stable}\} \\
    \SStab(\sE) & := \{ α ∈ \Mov(X) \:|\: \sE \text{ is α-semistable}\}.
  \end{align*}
\end{defn}

\begin{rem}[Convexity of stabilising classes]\label{rem:cvx}
  The sets $\Stab(\sE)$ and $\SStab(\sE)$ of Definition~\ref{def:sc} are clearly
  convex.  The set $\SStab(\sE)$ is closed and contains $\Stab(\sE)$.
\end{rem}

\begin{thm}[Openness of stability, I]\label{thm:openness}
  Let $X$ be a normal, $\bQ$-factorial, projective variety and let $\sE$ be any
  non-trivial, torsion-free sheaf on $X$.  If $α ∈ \Stab(\sE)$ is big, then
  $\Stab(\sE)$ contains an open neighbourhood $U = U(α) ⊆ \Mov(X)$.
\end{thm}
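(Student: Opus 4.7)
The plan is by contradiction. Assume $α$ is big and $\sE$ is $α$-stable, but no open neighbourhood of $α$ in $\Mov(X)$ is contained in $\Stab(\sE)$. Then pick a sequence $α_n → α$ with $α_n \notin \Stab(\sE)$, so there exist subsheaves $0 \subsetneq \sF_n \subsetneq \sE$ of rank $r'_n$ with $1 ≤ r'_n ≤ r-1$ (writing $r := \rank \sE$) that destabilise $\sE$ at $α_n$. Replacing each $\sF_n$ by its saturation in $\sE$ via Corollary~\ref{cor:slopesat}, and passing to a subsequence along which $r'_n = r'$ is constant (pigeonhole), set
$$
τ_n := r · [\sF_n] - r' · [\sE] ∈ N¹(X)_{\bR}.
$$
The two conditions then read $τ_n · α < 0$ ($α$-stability of $\sE$) and $τ_n · α_n ≥ 0$ (failure of $α_n$-stability).

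The uniform input that keeps the $[\sF_n]$ in check is the following. Fix a very ample line bundle $\sH$ and an embedding $\sE \into \sH^{\oplus r}$, as in the proof of Proposition~\ref{prop:bounded}. Corollary~\ref{cor:sstabds} yields $μ_β(\sF_n) ≤ μ_β(\sH^{\oplus r}) = [\sH] · β$ for every $β ∈ \Mov(X)$, so the class $r' · [\sH] - [\sF_n]$ pairs non-negatively with every movable class; that is, it lies in the closed pseudo-effective cone (the dual cone of $\Mov(X)$ in $N¹(X)_{\bR}$).

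Fix a norm on $N¹(X)_{\bR}$ and distinguish two cases. If $(τ_n)$ is bounded, then Theorem~\ref{thm:finiteQCart} implies that $m · τ_n$ lies in the lattice $N_{\bQ\Div}(X)$ for some fixed $m$, and hence takes only finitely many values; passing to a subsequence with $τ_n ≡ τ_0$, the limit of $τ_0 · α_n ≥ 0$ is $τ_0 · α ≥ 0$, contradicting stability. Otherwise $\|τ_n\| → \infty$, and along a further subsequence the unit vectors $\hat{τ}_n := τ_n / \|τ_n\|$ converge to some $ν_\infty$ with $\|ν_\infty\|=1$. Dividing $τ_n · α < 0$ and $τ_n · α_n ≥ 0$ by $\|τ_n\|$ and using $α_n → α$ forces $ν_\infty · α = 0$. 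Moreover, $\frac{r}{\|τ_n\|}\bigl(r' · [\sH] - [\sF_n]\bigr)$ is pseudo-effective for each $n$ and converges to $-ν_\infty$ (since $\frac{r[\sF_n]}{\|τ_n\|} = \hat{τ}_n + \frac{r' · [\sE]}{\|τ_n\|} → ν_\infty$ while $\frac{rr' · [\sH]}{\|τ_n\|} → 0$), so closedness of the pseudo-effective cone makes $-ν_\infty$ pseudo-effective.

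The decisive step, which I expect to be the main obstacle, is the final use of bigness of $α$. Because $α$ lies in the interior of $\Mov(X)$, the functional $ν \mapsto ν · α$ is strictly positive on the pseudo-effective cone minus the origin: if $ν · α = 0$ for a pseudo-effective $ν$, then pseudo-effectivity applied to the perturbations $α ± ε γ ∈ \Mov(X)$ (which lie in $\Mov(X)$ for $γ$ in a neighbourhood of $0$ by bigness) gives $ν · γ = 0$ identically, whence $ν = 0$ by non-degeneracy of the pairing~\eqref{eq:pairing}. Applied to $-ν_\infty$, this forces $ν_\infty = 0$, contradicting $\|ν_\infty\|=1$. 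The contradiction completes the proof.
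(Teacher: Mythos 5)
Your argument is correct, and it reaches the conclusion by a genuinely different route from the paper. The paper's proof is constructive rather than by contradiction: it introduces the supremum $\mu^{\max,sc}_{\alpha}(\sE)$ of slopes of strict subsheaves, uses bigness of $\alpha$ to show (Proposition~\ref{prop:maxsc}) that this supremum is attained and hence, by stability, strictly smaller than $\mu_{\alpha}(\sE)$, and then combines this positive gap with the uniform linear bound on $\mu^{\max}_{\beta}(\sE)$ over a relatively compact neighbourhood (Proposition~\ref{prop:bounded}) to exhibit an explicit neighbourhood $U = e\cdot(V-\alpha)+\alpha$ on which a convexity estimate forces stability. You instead take a sequence $\alpha_n \to \alpha$ of non-stabilising classes, encode the destabilisation in the classes $\tau_n = r\cdot[\sF_n]-r'\cdot[\sE]$, and split into a bounded case (settled by the discreteness coming from Theorem~\ref{thm:finiteQCart}) and an unbounded case (settled by a projectivised limit $-\nu_\infty$ in the dual cone of $\Mov(X)$, which bigness of $\alpha$ forces to vanish). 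The two essential inputs of your argument --- finite generation of numerical classes, and the strict positivity of a big class on the nonzero part of the pseudo-effective cone, i.e.\ Inequality~\eqref{eq:dfl3} --- are precisely the ones the paper deploys inside the proof of Proposition~\ref{prop:maxsc}, so in effect you have inlined that proposition and rearranged its content into a sequential compactness argument; note also that your closing claim about orthogonality to a big class is exactly the quantitative statement \eqref{eq:dfl3}, which you reprove correctly from the non-degeneracy of the pairing~\eqref{eq:pairing}. What the paper's route buys is an explicit neighbourhood and the reusable quantity $\mu^{\max,sc}_{\alpha}$, which also drives the proof of Theorem~\ref{thm:toma}; what your route buys is that you never need to know the supremum of slopes is attained. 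Only cosmetic points remain: the case $\rank\sE = 1$ is vacuous and should be set aside at the start (as the paper does), and your appeal to the ``closed pseudo-effective cone'' is best read purely as the dual cone of $\Mov(X)$ via Definition~\ref{defn:movable} and cone biduality, since Remark~\ref{rem:BDPP} is stated only for manifolds.
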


The following result asserts that even in cases where we cannot show openness,
any class $α ∈ \Stab(\sE)$ can be approximated by big classes in $\Stab(\sE)$.
The authors would like to thank Matei Toma who explained Theorem~\ref{thm:toma}
to us in case where $β$ is a general complete intersection curve.  Together with
further related results, his argument will also appear in \cite{GRT}.

\begin{thm}[Openness of stability, II]\label{thm:toma}
  Let $X$ be a normal, $\bQ$-factorial, projective variety and let $\sE$ be any
  non-trivial, torsion-free sheaf on $X$.  Consider classes $α ∈ \Stab(\sE)$ and
  $β ∈ \Mov(X)°$.  Then, there exists a number $e ∈ \bQ^+$ such that $(α+ε· β) ∈
  \Stab(\sE)$, for any real $ε ∈ [0,e]$.  In particular, if $L ⊂ N_1(X)_{\bR}$
  is any line through $α$ that intersects the interior $\Mov (X)°$, then $L ∩
  \Stab(\sE)$ is an interval of positive length.
\end{thm}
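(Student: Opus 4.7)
The plan is to prove the main existence statement by contradiction, and then to deduce the statement about the line $L$ as a formal consequence. The argument pivots on combining $α$-stability of $\sE$ with Grothendieck's lemma for numerical classes (Theorem~\ref{thm:numGrothendieck}), which crucially requires the perturbation direction to be big.

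Suppose no such $e$ exists. Then one extracts a sequence $\epsilon_i \searrow 0^+$ and, for each $i$, a proper subsheaf $\sF_i \subsetneq \sE$ of positive rank $r_i < \rank \sE$ satisfying $\mu_{α + \epsilon_i β}(\sF_i) \geq \mu_{α + \epsilon_i β}(\sE)$. The first key observation is that along such a sequence one automatically obtains $\mu_β(\sF_i) > \mu_β(\sE)$: rearranging the destabilising inequality gives
\[
\epsilon_i \bigl(\mu_β(\sF_i) - \mu_β(\sE)\bigr) \geq \mu_α(\sE) - \mu_α(\sF_i),
\]
whose right-hand side is strictly positive by $α$-stability of $\sE$ together with $r_i < \rank \sE$.

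At this point the bigness of $β$ becomes essential: Theorem~\ref{thm:numGrothendieck}, applied with the constant $c := \mu_β(\sE)$, forces the numerical classes $[\sF_i]$ to lie in a finite set. Since the ranks $r_i$ already lie in the finite set $\{1, \dots, \rank \sE - 1\}$, after passing to a subsequence both $r_i = r$ and $[\sF_i] = \xi$ become constant. The fixed numerical divisor class $D := \rank \sE \cdot \xi - r \cdot [\sE]$ then satisfies $D \cdot α < 0$ by $α$-stability, while the destabilising inequalities rewrite as $D \cdot (α + \epsilon_i β) \geq 0$ for every $i$. Subtracting,
\[
\epsilon_i \cdot (D \cdot β) \geq -\, D \cdot α \,>\, 0,
\]
which in particular forces $D \cdot β > 0$ and bounds $\epsilon_i$ uniformly below by the positive constant $(-D \cdot α)/(D \cdot β)$, contradicting $\epsilon_i \to 0$. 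Shrinking the resulting positive bound to any smaller positive rational yields the $e \in \bQ^+$ claimed.

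I expect the main obstacle to be precisely the control of the \emph{a priori} infinite family of subsheaves $\sF_i$ that might start destabilising $α + \epsilon_i β$ as $\epsilon_i \to 0$; the lower bound $\mu_β(\sF_i) > \mu_β(\sE)$ is what converts this into a setting in which Theorem~\ref{thm:numGrothendieck} (where the bigness of $β$ enters in an essential way) delivers the required finiteness. For the final assertion about a line $L$ through $α$ meeting $\Mov(X)°$, one chooses any $\gamma \in L \cap \Mov(X)°$ with $\gamma \neq α$, applies the already proved statement with $β := \gamma$, and observes that each point $α + \epsilon \gamma$ is a positive scalar multiple of $\tfrac{1}{1+\epsilon}α + \tfrac{\epsilon}{1+\epsilon}\gamma$, which lies on the segment from $α$ to $\gamma$ inside $L$. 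Since (semi)stability depends only on the ray through a numerical class, this produces a subinterval of $L \cap \Stab(\sE)$ of positive length.
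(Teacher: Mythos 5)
Your proof is correct and takes essentially the same route as the paper's: both arguments rest on the observation that any subsheaf destabilising $α+ε·β$ must already satisfy $μ_β(\sF) > μ_β(\sE)$, so that Grothendieck's lemma for numerical classes (Theorem~\ref{thm:numGrothendieck}, which is where the bigness of $β$ enters) reduces the problem to finitely many linear functions of $ε$. The paper packages this directly, via continuity of the finite maximum $Φ(ε)$ over the $β$-destabilising classes, rather than by contradiction and subsequence extraction, but the mathematical content is identical.
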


\begin{rem}\label{rem:toma}
  Let $X$ be a normal, $\bQ$-factorial, projective variety and let $\sE$ be any
  non-trivial, torsion-free sheaf on $X$.  Given any class $α ∈ \Stab(\sE)$,
  Theorems~\ref{thm:openness} and \ref{thm:toma} together assert that
  $\Stab(\sE) ∩ \Mov(X)°$ is open and has $α$ as a boundary point.  In
  particular, there exists a sequence of big, rational classes $β_i ∈
  \Stab(\sE)$ with $\lim β_i = α$.
\end{rem}

We do not know if these results are optimal.  \Preprint{For instance, we cannot
  rule out that the set of movable classes that stabilise a given bundle $\sE$
  is of the form illustrated in Figure~\vref{fig:hyp_Estab}.  } For all
applications, Theorems~\ref{thm:openness} and \ref{thm:toma} seem to suffice.

\Preprint{
\begin{figure}

\centering
\footnotesize

\begin{tikzpicture}
\draw [fill=gray!10] (0,0) circle [x radius=3, y radius=1.5];
\draw (3,1.5) node[above] {Cross-section through Mov($X$)};
\draw [dashed, fill=gray!20] (-1,0) circle [x radius=2, y radius=1] node[above]{$\Stab(\sE)$};
\filldraw (-3,0) circle (0.05) node[left] {class $α$} to (1.5,-1) circle (0.05) node[right] {$β$};
\end{tikzpicture}

\bigskip

{\small The set of movable classes that stabilise a given bundle $\sE$ is open
  in the interior of the movable cone, but does contain an isolated class on the
  boundary.  Note that Theorem~\ref{thm:toma} holds in this context because the
  boundary of $\Stab(\sE)$ intersects the boundary of the movable cone
  tangentially.}

\caption{Hypothetical cross-section through $\operatorname{Mov}(X)$.}
\label{fig:hyp_Estab}
\end{figure}
}

\begin{question}
  Are there examples where $\Stab(\sE)$ is not open in $\Mov(X)$?  If so, are
  there natural conditions to guarantee openness?
\end{question}

We begin with a preparatory subsection.  Theorems~\ref{thm:openness} and
\ref{thm:toma} are then shown in
Sections~\ref{pf:thm:openness}--\ref{pf:thm:toma} below.

\subsection{Suprema of slopes of strict subsheaves}
\approvals{
  Daniel & yes \\
  Stefan & yes \\
  Thomas & yes
}

Given a sheaf $\sE$ and a movable class $α$, we have discussed the number
$μ_{α}^{\max} (\sE)$ in Section~\ref{ssec:ssgb}.  Here, we discuss a similar
(but more delicate) quantity, namely the supremum of slopes of subsheaves $\sF
\subsetneq \sE$ whose rank is strictly smaller than that of $\sE$.  If $α$ is
either big or rational, we will again see that the supremum is in fact a
maximum.

\begin{defn}[Suprema of slopes of strict subsheaves]\label{def:mumax2}
  Let $X$ be a normal, $\bQ$-factorial, projective variety and $α ∈ \Mov(X)$ a
  movable class.  If $\sE$ is any torsion-free, coherent sheaf of
  $\sO_X$-modules with $\rank \sE ≥ 2$, write
  $$
  μ_{α}^{\max,sc} (\sE) := \sup \left\{ μ_{α}(\sF) \,|\, \text{$ 0 \ne \sF \subsetneq
      \sE$ coherent with } \rank \sF < \rank \sE \right\}.
  $$
\end{defn}

\begin{rem}
  Obviously, $μ_{α}^{\max,sc} (\sE) ≤ μ_{α}^{\max} (\sE)$.
\end{rem}

\begin{prop}[Existence of subsheaves with maximal slope, II]\label{prop:maxsc}
  In the setting of Definition~\ref{def:mumax2}, suppose that $α$ is big or that
  $α$ is rational.  Then, there exists a coherent subsheaf $\sF ⊆ \sE$ such that
  $\rank \sF < \rank \sE$ and such that $μ_{α}^{\max,sc}(\sE) = μ_{α}(\sF)$.
\end{prop}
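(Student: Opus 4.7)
The plan is to reduce the proposition to a finiteness statement about attainable slopes. Set $r := \rank \sE ≥ 2$ and $\mu^* := μ_α^{\max,sc}(\sE)$; the latter is finite by Proposition~\ref{prop:bounded}, and since $\sE$ admits at least one rank-$1$ subsheaf (for instance, coming from a nonzero section of a sufficiently positive twist), we have $\mu^* > -\infty$ as well. I will show that the set
$$
T := \bigl\{ μ_α(\sF) \,|\, 0 \ne \sF \subsetneq \sE \text{ coherent}, \ \rank \sF < r, \ μ_α(\sF) ≥ \mu^* - 1 \bigr\}
$$
is finite. Once this is established, $\sup T = \mu^*$ is realised as a maximum, proving the proposition.

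In the case where $α$ is big, I would apply Theorem~\ref{thm:numGrothendieck} with $β = α$ and $c = \mu^* - 1$, yielding finiteness of the set of numerical classes $[\sF]$ of positive-rank subsheaves with $μ_α(\sF) ≥ c$. Since $\rank \sF$ is confined to the finite set $\{1,\ldots,r-1\}$ and $μ_α(\sF)$ is determined by $[\sF]$ and $\rank \sF$, finiteness of $T$ follows.

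The case where $α$ is rational requires a different tool, as Grothendieck's lemma is only available for big classes. Here I would instead invoke Theorem~\ref{thm:finiteQCart}: the numerical classes of arbitrary coherent sheaves on the $\bQ$-factorial variety $X$ lie in a fixed finitely generated $\bZ$-sublattice $N_{\bQ\Div}(X) ⊂ N¹(X)_{\bQ}$. Choosing integral generators $[D_1],\ldots,[D_s]$ and pairing each against the rational class $α$ produces rationals $[D_i] · α ∈ \bQ$, whose common denominator $N$ bounds the denominator of $[\sF] · α$ for every coherent $\sF$. Dividing by $\rank \sF ∈ \{1,\ldots,r-1\}$ places every slope $μ_α(\sF)$ of a strict subsheaf in the fixed discrete lattice $\frac{1}{N(r-1)!} \bZ$; any bounded subset of such a lattice is finite, so $T$ is finite.

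The main obstacle is the genuine bifurcation between the big and rational cases. The direct limiting argument of Proposition~\ref{prop:eswms} does not adapt, because summing two saturated subsheaves of equal rank $s < r$ may produce a subsheaf of rank exactly $r$, which falls outside the class of strict subsheaves and offers no contradiction to the maximality of $s$. The two finiteness arguments above are complementary, one exploiting positivity (bigness), the other discreteness (rationality), and I see no way to unify them into a single argument covering all movable $α$.
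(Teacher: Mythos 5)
Your proof is correct, and it differs from the paper's in one place worth noting. The rational case is exactly the paper's argument: Theorem~\ref{thm:finiteQCart} gives a common denominator $m$ for the intersection numbers $[\sF]\cdot\alpha$, so all slopes of strict subsheaves lie in the discrete set $\frac{1}{m\cdot(\rank\sE)!}\,\bZ$ and a bounded supremum is attained. In the big case the paper does not invoke Theorem~\ref{thm:numGrothendieck}; instead it argues by contradiction with a sequence of saturated subsheaves $\sF_j$ whose slopes increase strictly to the supremum: strict increase forces the sets of classes $[\sF_j]$ and $[\sQ_j]$ (with $\sQ_j=\sE/\sF_j$ torsion-free) to be infinite, Theorem~\ref{thm:finiteQCart} then forces them to be unbounded in norm, and the estimate $D\cdot\alpha\geq C\cdot\Vert D\Vert$ for pseudo-effective $D$ (available because $\alpha$ is big, after twisting $\sE$ so that all torsion-free quotients have pseudo-effective determinant) makes the slopes unbounded, contradicting convergence. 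You instead apply Theorem~\ref{thm:numGrothendieck} with $c=\mu^*-1$ to get finiteness of the relevant numerical classes, hence of the set of slopes within distance $1$ of the supremum. Since the Grothendieck lemma is itself proved from precisely the same two ingredients (the lattice theorem plus the positivity of a big class against effective classes), the two arguments are equivalent in substance, but yours is the more economical packaging given that Theorem~\ref{thm:numGrothendieck} is already on record earlier in the paper. Your closing observation about why the rank-increasing trick of Proposition~\ref{prop:eswms} cannot be recycled here --- the sum of two saturated rank-$s$ subsheaves may have full rank --- correctly identifies why this proposition needs a separate argument at all.
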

\begin{proof}[Proof of Proposition~\ref{prop:maxsc} in case where $α$ is big]
  Observing that the statement of Proposition~\ref{prop:maxsc} remains invariant
  when twisting $\sE$, we may replace $\sE$ by a tensor product and assume that
  there exists a number $N ∈ \bN^+$, an ample line bundle $\sH$ and a surjection
  $\sH^{\oplus N} → \sE$.  In particular, given any torsion-free quotient $\sE →
  \sQ$ of positive rank, there exists a non-trivial morphism of Weil divisorial
  sheaves
  $$
  \sH^{\otimes \rank \sQ} → \det \sQ.
  $$
  It follows that the numerical class $[\sQ]$ is pseudo-effective, and in fact
  big.

  Introduce any norm $\Vert · \Vert$ on the finite-dimensional space
  $N¹(X)_{\bR}$ of numerical Cartier divisor classes.  Since $α$ is big, there
  exists a constant $C > 0$ such that
  \begin{equation}\label{eq:dfl3}
    D · α ≥ C · \Vert D \Vert \quad \text{ for any pseudo-effective } D ∈ N¹(X)_{\bR}.
  \end{equation}
  In particular, Inequality~\eqref{eq:dfl3} holds for divisors $D$ that
  represent numerical classes of non-trivial, torsion-free quotients of $\sE$.

  Returning to the assertion of Proposition~\ref{prop:maxsc}, we argue by
  contradiction and assume that the number $μ_{α}^{\max,sc} (\sE)$ is not
  attained.  Then, there exists a sequence of subsheaves $\sF_j \subsetneq \sE$
  such that the sequence of slopes $μ_{α}(\sF_j)$ is strictly increasing and
  converges to $μ_{α}^{\max,sc}(\sE)$.  Corollary~\ref{cor:slopesat} allows to
  assume that the quotients $\sQ_j := \sE/\sF_j$ are torsion-free.

  The assumption that $μ_{α}(\sF_j)$ is strictly increasing implies that the
  sets of numerical classes $\bigl\{ [\sF_j] \:|\: j ∈ \bN\bigr\}$ and $\bigl\{
  [\sQ_j] \:|\: j ∈ \bN\bigr\}$ are both infinite.  The theorem on finite
  generation of numerical divisor classes, Theorem~\ref{thm:finiteQCart}, thus
  implies that both sets are unbounded with respect to the norm $\Vert ·
  \Vert$.  Inequality~\eqref{eq:dfl3} thus implies that the sequence $\bigl(
  μ_{α}(\sQ_j) \bigr)_{j ∈ \bN}$ is unbounded, and so is $\bigl( μ_{α}(\sF_j)
  \bigr)_{j ∈ \bN}$.  This contradicts convergence.
\end{proof}
\begin{proof}[Proof of Proposition~\ref{prop:maxsc} in case where $α$ is rational]
  If $α$ is rational, the Theorem on finite generation of numerical divisor
  classes, Theorem~\ref{thm:finiteQCart}, allows to find an integer $m ∈ \bN^+$
  such that $(m · α) · [\sF]$ is integral, for any subsheaf $\sF ⊆ \sE$
  of positive rank (and in fact for any coherent sheaf on $X$).  It follows that
  the slope $μ_{α}(\sF)$ takes values in the discrete set $\frac{1}{ (\rank
    \sE)!  · m} · \bZ$ and the claim is obvious.
\end{proof}

\begin{rem}
  Let $α$ be a movable class that is either big or rational.  As a consequence
  of Proposition~\ref{prop:maxsc}, we see that $\sE$ is $α$-stable if and only
  if $μ_{α}^{\max,sc}(\sE) < μ_{α}^{\max}(\sE)$.
\end{rem}

\begin{rem}
  There are relevant situations where the conclusion of
  Proposition~\ref{prop:maxsc} holds even for irrational classes that lie on the
  boundary of $\Mov(X)$.  The most significant is that where $π: \wtilde X → X$
  is a resolution of singularities and $α ∈ \Mov(X)$ a big, movable class.  Its
  pull-back $π^*α$ is a movable class contained in the boundary $\partial
  \Mov(\wtilde X)$.  However, Proposition~\ref{prop:invariance} guarantees that
  everything said so far also holds for $φ^*α$.
\end{rem}
  
\begin{rem}
  We do not know whether Proposition~\ref{prop:maxsc} remains true for
  irrational classes $α$ that lie on the boundary of the movable cone.
\end{rem}

\subsection{Proof of Theorem~\ref*{thm:openness}}
\label{pf:thm:openness}
\approvals{
  Daniel & yes \\
  Stefan & yes \\
  Thomas & yes }

Observe that if $\rank \sE = 1$, then there is nothing to show.  We will
therefore assume throughout that proof that $\rank \sE ≥ 2$.  Choose any
relatively compact, open neighbourhood of $α$ in $\Mov(X)$, say $V = V(α)$.
Then, there exists a (small) number $e ∈ \bQ^+$ such that the following holds
for all numbers $0 ≤ ε ≤ e$ and for all classes $β ∈ V$,
\begin{equation}\label{eq:B}
  ε · \bigl( \underbrace{μ^{\max}_{β}(\sE) - μ_{β}(\sE)}_{\mathclap{\text{bounded on $V$ by Prop.~\ref{prop:bounded}}}} \bigr) < (1-ε) · \bigl( \underbrace{μ_{α\vphantom{β}}(\sE) - μ^{\max,sc}_{α}(\sE)}_{> 0 \text{ by Prop.~\ref{prop:maxsc}}} \bigr).
\end{equation}
The following will then hold for any class $β ∈ V$, any coherent subsheaf $\sF
\subsetneq \sE$ with $\rank \sF < \rank \sE$, and any number $0 < ε ≤ e$,
\begin{align*}
  μ_{(1-ε) · α + ε · β}(\sF) & = (1-ε) · μ_{α}(\sF) + ε · μ_{β}(\sF) && \\
  & ≤ (1-ε) · μ^{\max,sc}_{α}(\sE) + ε · μ^{\max}_{β}(\sE) && \\
  & < (1-ε) · μ_{α}(\sE) + ε · μ_{β}(\sE) && \text{Equation~\eqref{eq:B}} \\
  & = μ_{(1-ε) · α + ε · β}(\sE).
\end{align*}
Finish the proof of Theorem~\ref{thm:openness} by setting $U := e · (V-α) +
α$.  \qed

\subsection{Proof of Theorem~\ref*{thm:toma}}
\label{pf:thm:toma}
\approvals{
  Daniel & yes \\
  Stefan & yes \\
  Thomas & yes
}

Consider the set $S$ of classes $[\sF]$ of sheaves $\sF ⊆ \sE$ that destabilise
$\sE$ with respect to $β$, i.e., $μ_{β}(\sF) > μ_{β}(\sE)$.  By Grothendieck's
lemma for numerical classes, Theorem~\ref{thm:numGrothendieck}, the set $S$ is
finite, say $S= \bigl\{ [\sF_1], …, [\sF_n] \bigr\}$ for suitable $\sF_i ⊆ \sE$.
Now, if $\sF ⊂ \sE$ is any subsheaf with $\rank \sF < \rank \sE$, there are two
cases:
\begin{itemize}
\item $[\sF] \not ∈ S$ and it follows from $α$-stability of $\sE$ that
  $$
  μ_{(1-ε)· α+ε· β}(\sF) < μ_{(1-ε)· α+ε· β}(\sE) \quad
  \text{for all } ε ∈ [0,1)
  $$
\item $[\sF] ∈ S$ and there exists an index $j$ such that $[\sF] = [\sF_j]$,
  hence
  $$
  μ_{(1-ε)· α+ε· β}(\sF) = μ_{(1-ε)· α+ε· β}(\sF_j) \quad \text{for
    all } ε ∈ [0,1].
  $$
\end{itemize}
In either case, we obtain the following inequality for all $ε ∈ [0,1]$,
$$
μ_{(1-ε)· α+ε· β}(\sF) ≤ \max \Bigl\{ \underbrace{\max_{1 ≤ j ≤ n}
  μ_{(1-ε)· α+ε· β}(\sF_j)}_{=:Φ(ε)},\, μ_{(1-ε)· α+ε· β}(\sE)
\Bigr\}.
$$
The inequality is strict for all $ε$ for which $Φ(ε) < μ_{(1-ε)· α+ε·
  β}(\sE)$.  \Preprint{The setup is depicted in Figure~\vref{fig:toma}.
  \begin{figure}
    \centering
    \footnotesize
    
    \begin{tikzpicture}[scale=0.5]
      \draw (-1,-1) to (16,-1);
      \filldraw (00,-1) circle (0.05) node[below] {$α$};
      \filldraw (15,-1) circle (0.05) node[below] {$β$};
      \filldraw (05,-1) circle (0.05) node[below] {$(1-e)· α + e· β$};
      
      \draw [dotted] (0,3) node[left]{$μ_{α}(\sE)$} to (15,3) node[right]{$μ_{β}(\sE)$};
      \draw [dotted] (0,2) node[left]{$μ_{α}(\sF_1)$} to (15,5) node[right]{$μ_{β}(\sF_1)$};
      \draw [dotted] (0,0) node[left]{$μ_{α}(\sF_2)$} to (15,6) node[right]{$μ_{β}(\sF_2)$};
      \draw [dotted] (0,1) node[left]{$μ_{α}(\sF)$} to (15,2) node[right]{$μ_{β}(\sF)$};
      
      \draw [very thick] (0,2) to (10,4) to (15,6);
    \end{tikzpicture}
    
    \bigskip
    
    {\small Slopes of the sheaves $\sF_j$ and of a sheaf $\sF$ such that $[\sF] \not ∈ S$ with
      respect to movable classes $(1-ε)· α + ε · β$.  The function $Φ$ is
      outlined in bold.}
    
    \caption{Functions discussed in the proof of Theorem~\ref{thm:toma}}
    \label{fig:toma}
  \end{figure}
} Conclude by observing that $Φ$ is continuous as a function of $ε$, and that
$Φ(0) < μ_{α}(\sE)$.  \qed

%
%
\svnid{$Id: 04.tex 214 2015-03-10 12:29:00Z kebekus $}

\section{Tensor products of semistable sheaves}
\label{sec:tensor}
\subversionInfo
\approvals{
  Daniel & yes \\
  Stefan & yes \\
  Thomas & yes
}

In this section we prove that the reflexive tensor product of sheaves which are
α-semistable for some movable class α is again α-semistable.  The following
notation will be used.

\begin{notation}[Reflexive tensor product]
  Given any two coherent sheaves $\sA$, $\sB$ of $\sO_X$-modules on a normal
  variety $X$, write $\sA \boxtimes \sB := (\sA \otimes \sB)^{**}$.  We refer to
  $\sA \boxtimes \sB$ as the \emph{reflexive tensor product} of $\sA$ and $\sB$.
\end{notation}

\begin{thm}[Semistability of tensor products]\label{thm:mainthm}
  Let $X$ be a normal $\bQ$-factorial projective variety, and let $α ∈ \Mov(X)$
  be any movable class.  If $\sF$ and $\sG$ are torsion-free, positive-rank,
  coherent sheaves on $X$, then the following holds.
  \begin{enumerate}
  \item\label{il:asx} $μ_{α}^{\max}\bigl( \sF \boxtimes \sG \bigr) =
    μ_{α}^{\max}(\sF) + μ_{α}^{\max}(\sG)$.
  \item\label{il:bsx} If $\sF$ and $\sG$ are α-semistable, then $\sF \boxtimes
    \sG$ is likewise α-semistable.
  \end{enumerate}
\end{thm}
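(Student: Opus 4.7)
The plan has two threads. First, given part (ii), part (i) follows from a Harder-Narasimhan argument: take the refined Harder-Narasimhan filtrations of $\sF$ and $\sG$ with respect to $α$ (Remark~\ref{rem:RHNF}) and tensor them to obtain a filtration of $\sF \boxtimes \sG$ whose graded pieces are reflexive tensor products of $α$-semistable sheaves, hence $α$-semistable by (ii). Since any subsheaf of $\sF \boxtimes \sG$ inherits a filtration by intersection, and each graded piece has slope at most $μ_α^{\max}(\sF) + μ_α^{\max}(\sG)$, one obtains $μ_α^{\max}(\sF \boxtimes \sG) \leq μ_α^{\max}(\sF) + μ_α^{\max}(\sG)$. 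The reverse inequality is immediate, on consideration of the image in $\sF \boxtimes \sG$ of the reflexive tensor of the maximal destabilising subsheaves of $\sF$ and $\sG$, together with Proposition~\ref{prop:mfss} (applied with (ii) for the semistable source). The real content is thus (ii), and the base case $α$ big and rational is \cite[Thm.~5.1 and Cor.~5.2]{CP11}; the HN argument then yields (i) as well in that case.

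Next I extend (i) to all of $\Mov(X)^\circ$ by continuity. For any $α \in \Mov(X)^\circ$, fix a compact neighbourhood $K \subset \Mov(X)^\circ$. By compactness of $K$, there exists $C > 0$ such that $D · γ \geq C · \|D\|$ for every pseudo-effective $D \in N^1(X)_{\bR}$ and every $γ \in K$, where $\|·\|$ is a fixed norm on $N^1(X)_{\bR}$. Re-running the proof of Theorem~\ref{thm:numGrothendieck} with this uniform estimate, one finds finitely many saturated subsheaves $\sF_1', \ldots, \sF_N' \subseteq \sE$ such that $μ_γ^{\max}(\sE) = \max_{i} μ_γ(\sF_i')$ for every $γ \in K$. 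Since $γ \mapsto μ_γ(\sF_i')$ is linear, $μ_γ^{\max}(\sE)$ is continuous in $γ$ on $K$. As (i) holds on the dense subset of big rational classes, it extends by continuity to all of $\Mov(X)^\circ$. Part (ii) on the interior is then immediate: if $\sF$, $\sG$ are $α$-semistable with $α$ interior, any subsheaf of $\sF \boxtimes \sG$ has slope at most $μ_α^{\max}(\sF) + μ_α^{\max}(\sG) = μ_α(\sF) + μ_α(\sG) = μ_α(\sF \boxtimes \sG)$.

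Finally, (ii) at a boundary class $α \in \partial \Mov(X)$ follows by a convex-combination approximation. Fix any $β \in \Mov(X)^\circ$ and set $α_ε := (1-ε)α + ε β$, which lies in $\Mov(X)^\circ$ for $ε > 0$. Using the $α$-semistability of $\sF$ together with the boundedness of $μ_β^{\max}(\sF)$ from Proposition~\ref{prop:bounded}, one computes for any subsheaf $\sF' \subseteq \sF$
\[
μ_{α_ε}(\sF') = (1-ε)μ_α(\sF') + ε μ_β(\sF') \leq μ_α(\sF) + ε \bigl( μ_β^{\max}(\sF) - μ_α(\sF) \bigr),
\]
so that $μ_{α_ε}^{\max}(\sF) \leq μ_α(\sF) + O(ε)$, and analogously for $\sG$. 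Applying (i) on the interior to $α_ε$ and any $\sH \subseteq \sF \boxtimes \sG$ yields
\[
μ_{α_ε}(\sH) \leq μ_{α_ε}^{\max}(\sF) + μ_{α_ε}^{\max}(\sG) \leq μ_α(\sF) + μ_α(\sG) + O(ε),
\]
and letting $ε \to 0$ shows that $\sF \boxtimes \sG$ is $α$-semistable. The HN argument of the first paragraph then gives (i) at the boundary as well.

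The main obstacle is the first-paragraph reduction from (i) to (ii): on a possibly singular, $\bQ$-factorial variety, constructing a filtration of $\sF \boxtimes \sG$ out of the Harder-Narasimhan filtrations of $\sF$ and $\sG$ requires some care, since the reflexive tensor product is not exact. Everything should go through modulo codimension-two loci, which are harmless for $c_1$ and slope computations, but the bookkeeping needs to be done honestly.
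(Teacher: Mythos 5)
Your proposal is essentially correct, but it reaches the result by a genuinely different route from the paper, and one step needs repair. The paper first reduces to showing that the product of two $\alpha$-\emph{stable} sheaves is semistable (its Proposition~\ref{prop:prrrp}, which is your first-paragraph HN argument in a slightly different packaging), then passes to a resolution $\pi:\wtilde X\to X$ on which the sheaves become locally free, and finally invokes the openness-of-stability results (Theorem~\ref{thm:toma}, Remark~\ref{rem:toma}) to produce a single sequence of big \emph{rational} classes $\beta_i\to\pi^*\alpha$ stabilising both pullbacks, to which Toma's theorem applies; semistability at $\alpha$ is then obtained in the limit. You avoid the reduction to stable sheaves and the openness machinery altogether: your uniform Grothendieck lemma over a compact $K\subset\Mov(X)^\circ$ (which does go through, since interior classes pair uniformly positively with pseudo-effective classes on $K$ and Theorem~\ref{thm:finiteQCart} makes bounded sets of classes finite) gives continuity of $\gamma\mapsto\mu^{\max}_\gamma$, hence (i) on the interior by density of big rational classes; and your boundary estimate $\mu^{\max}_{\alpha_\varepsilon}(\sF)\le(1-\varepsilon)\mu_\alpha(\sF)+\varepsilon\mu^{\max}_\beta(\sF)$ is a clean quantitative substitute for openness of stability --- it correctly requires no claim that $\sF$ stays semistable at $\alpha_\varepsilon$, only additivity of $\mu^{\max}$ there. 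This is arguably more elementary than the paper's argument and makes Theorems~\ref{thm:openness}--\ref{thm:toma} unnecessary for this particular result.

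The point that needs fixing is the base case. You invoke \cite[Thm.~5.1 and Cor.~5.2]{CP11} for big rational classes directly on $X$, but those results (like \cite[Prop.~6.1]{CP11}, which the paper actually uses) are proved for locally free sheaves on smooth varieties; on a singular $\bQ$-factorial $X$ you must first pass to a resolution with $\pi^{[*]}\sF$, $\pi^{[*]}\sG$ locally free (Lemma~\ref{lem:p2} together with Rossi's theorem). The catch is that the pullback of a big class on $X$ lands on the \emph{boundary} of $\Mov(\wtilde X)$, so you cannot simply transfer the big-rational base case downstairs. The remedy stays within your own framework: run threads two and three entirely on $\wtilde X$ (where CP11 genuinely supplies the big rational base case), obtain (i) and (ii) for \emph{all} classes in $\Mov(\wtilde X)$ including $\pi^*\alpha$, and transfer back to $X$ via Proposition~\ref{prop:invariance}, which preserves $\mu^{\max}$. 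You flag the singular-variety bookkeeping, but you locate the difficulty in the filtration construction rather than in the applicability of the cited base case, which is where the real issue sits.
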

 
\begin{cor}[Semistability of reflexive symmetric products]\label{cor:symm}
  In the setting of Theorem~\ref{thm:mainthm}, if $\sF$ is α-semistable and $m
  ∈ \bN^+$ any number, then any reflexive symmetric product $\bigl( \Sym^m \sF
  \bigr)^{**}$ is again α-semistable.
\end{cor}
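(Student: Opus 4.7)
The plan is to deduce the corollary from Theorem~\ref{thm:mainthm} together with the characteristic-zero fact that $\Sym^m \sF$ is a direct summand of $\sF^{\otimes m}$. First I would iterate Theorem~\ref{thm:mainthm}~\ref{il:bsx} to conclude that the $m$-fold reflexive tensor power $\sF^{\boxtimes m} := (\sF^{\otimes m})^{**}$ is $\alpha$-semistable.

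Next, I would exploit the action of the symmetric group. Because the ground field has characteristic zero, the symmetrisation operator
\[
\pi := \frac{1}{m!} \sum_{\sigma \in S_m} \sigma \in \End_{\sO_X}\bigl( \sF^{\otimes m} \bigr)
\]
is an idempotent whose image is $\Sym^m \sF$, producing a splitting $\sF^{\otimes m} \cong \Sym^m \sF \oplus \ker(\pi)$ of coherent $\sO_X$-modules. Since double-dualisation commutes with finite direct sums, this promotes to
\[
\sF^{\boxtimes m} \cong \bigl( \Sym^m \sF \bigr)^{**} \oplus (\ker \pi)^{**}.
\]
To conclude, I would invoke the elementary observation that a direct summand $\sG$ of an $\alpha$-semistable sheaf $\sE$ with $\mu_{\alpha}(\sG) = \mu_{\alpha}(\sE)$ is again $\alpha$-semistable: any subsheaf of $\sG$ is a subsheaf of $\sE$, so has slope at most $\mu_{\alpha}(\sE) = \mu_{\alpha}(\sG)$. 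Applied to $\sE = \sF^{\boxtimes m}$ and $\sG = (\Sym^m \sF)^{**}$, this yields the claim. The hypothesis on slopes is automatic: for a direct sum of torsion-free sheaves $\sE = \sE_1 \oplus \sE_2$ of positive rank with $\sE$ semistable, each $\mu_{\alpha}(\sE_i) \leq \mu_{\alpha}(\sE)$, yet the rank-weighted average of the $\mu_{\alpha}(\sE_i)$ equals $\mu_{\alpha}(\sE)$, forcing equality in both inequalities.

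The only genuine subtlety lies in the bookkeeping: I must check that $\mu_{\alpha}$ is additive along the reflexive tensor product (which holds because $\sA \boxtimes \sB$ and $\sA \otimes \sB$ agree outside a codimension-two set, hence have the same Weil divisorial determinant) and that double-dualisation commutes with finite direct sums (which is immediate from the functoriality of $(-)^{**}$). No geometric input beyond Theorem~\ref{thm:mainthm} is required, which is precisely what justifies labelling this result a corollary.
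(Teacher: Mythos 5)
Your proposal is correct and follows exactly the paper's route: the printed proof consists of the single observation that $\bigl(\Sym^m \sF\bigr)^{**}$ is a direct summand of $\sF^{\boxtimes m}$, and your argument simply makes explicit the steps the authors leave to the reader (iterating Theorem~\ref{thm:mainthm}, the characteristic-zero symmetrisation idempotent, equality of slopes of the summands, and semistability of an equal-slope direct summand).
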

\begin{proof}
  Observe that $\bigl( \Sym^m \sF \bigr)^{**}$ is a direct summand of
  $\sF^{\boxtimes m}$.
\end{proof}

\subsection{Preparation for the proof: reduction to semistability of tensor products of stable sheaves}
\label{ssec:prrrp}
\approvals{
  Daniel & yes \\
  Stefan & yes \\
  Thomas & yes
}

We show in this section that to prove Theorem~\ref{thm:mainthm}, it suffices
to show that the product of two stable sheaves is semistable.  The
following is the main result of the present Section~\ref{ssec:prrrp}.

\begin{prop}\label{prop:prrrp}
  Let $X$ be a normal, $\bQ$-factorial, projective variety and let $α ∈ \Mov(X)$
  be any movable class.  Suppose that the reflexive tensor product of any two
  torsion-free, α-stable sheaves is α-semistable.  Then, the following holds for
  any two torsion-free sheaves $\sF$ and $\sG$ on $X$.
  \begin{enumerate}
  \item\label{il:A} $μ_{α}^{\max} (\sF \boxtimes \sG ) = μ_{α}^{\max} (\sF) +
    μ_{α}^{\max} (\sG)$.
  \item\label{il:B} If $\sF$ and $\sG$ are α-semistable, then $\sF \boxtimes
    \sG$ is again α-semistable.
  \end{enumerate}
\end{prop}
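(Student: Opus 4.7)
The plan is to observe that (a) implies (b) directly, and then to establish (a) itself using the refined Harder-Narasimhan filtrations provided by Remark~\ref{rem:RHNF}. For the implication (a) $\Rightarrow$ (b): if $\sF$ and $\sG$ are α-semistable, then $μ_α^{\max}(\sF) = μ_α(\sF)$ and $μ_α^{\max}(\sG) = μ_α(\sG)$. Assuming (a), we obtain $μ_α^{\max}(\sF \boxtimes \sG) = μ_α(\sF) + μ_α(\sG)$. Since $\sF \boxtimes \sG$ and $\sF \otimes \sG$ agree in codimension one, the standard first Chern class formula for tensor products gives $μ_α(\sF \boxtimes \sG) = μ_α(\sF) + μ_α(\sG)$; combining the two identities yields α-semistability of $\sF \boxtimes \sG$.

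For the lower bound $μ_α^{\max}(\sF \boxtimes \sG) \geq μ_α^{\max}(\sF) + μ_α^{\max}(\sG)$ in (a), let $\sF^{(1)} \subseteq \sF$ and $\sG^{(1)} \subseteq \sG$ be the first pieces of the refined HN filtrations; these are α-stable of slopes $μ_α^{\max}(\sF)$ and $μ_α^{\max}(\sG)$, respectively. The natural map $\sF^{(1)} \boxtimes \sG^{(1)} \to \sF \boxtimes \sG$, induced from $\sF^{(1)} \otimes \sG^{(1)} \to \sF \otimes \sG$ after reflexifying, is generically injective and has reflexive (hence torsion-free) source, so it is globally injective. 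By the hypothesis of the proposition, $\sF^{(1)} \boxtimes \sG^{(1)}$ is α-semistable with slope $μ_α^{\max}(\sF) + μ_α^{\max}(\sG)$, and the lower bound follows.

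For the upper bound, I use the full refined HN filtrations of $\sF$ and $\sG$, with α-stable quotients $\sQ_i^F$ and $\sQ_j^G$, to build a double filtration of $\sF \boxtimes \sG$ whose associated graded pieces are the reflexive tensor products $\sQ_i^F \boxtimes \sQ_j^G$. By hypothesis, each such graded piece is α-semistable, with slope $μ_α(\sQ_i^F) + μ_α(\sQ_j^G) \leq μ_α^{\max}(\sF) + μ_α^{\max}(\sG)$. Any subsheaf $\sH \subseteq \sF \boxtimes \sG$ inherits a filtration whose graded pieces inject generically into $\sQ_i^F \boxtimes \sQ_j^G$; by α-semistability of the latter, their slopes are bounded above by $μ_α^{\max}(\sF) + μ_α^{\max}(\sG)$, and since $μ_α(\sH)$ is a rank-weighted average of these slopes, the required bound follows. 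The principal obstacle here is the careful handling of reflexive hulls versus ordinary tensor products throughout the filtration argument: tensoring with a non-flat sheaf introduces Tor terms supported in codimension $\geq 2$, which do not affect first Chern classes or slopes but must be navigated cleanly when constructing the induced filtration on $\sH$ and when comparing its graded pieces to $\sQ_i^F \boxtimes \sQ_j^G$.
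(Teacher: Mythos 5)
Your proposal is correct and follows essentially the same strategy as the paper: reduce to refined Harder--Narasimhan filtrations, use the hypothesis on products of stable pieces to bound the slope of every subsheaf of $\sF \boxtimes \sG$ by $μ_{α}^{\max}(\sF)+μ_{α}^{\max}(\sG)$, exhibit a subsheaf attaining that bound, and deduce (\ref{il:B}) from (\ref{il:A}) via additivity of slopes. The only (cosmetic) difference is organisational: the paper first tensors the filtration of $\sF$ with a \emph{stable} $\sG$ and then bootstraps to the general case in two steps, whereas you pass directly to the double filtration with graded pieces $\sQ_i^F \boxtimes \sQ_j^G$ --- both handle the reflexive-hull versus tensor-product issue by working on the big open set where all sheaves are locally free.
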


We prove Proposition~\ref{prop:prrrp} in the remainder of
Section~\ref{ssec:prrrp}.  The proof is subdivided into five steps.

\subsubsection*{Step 1: Setup}
\approvals{
  Daniel & yes \\
  Stefan & yes \\
  Thomas & yes
}

Since numerical classes and slopes are unaffected when modifying $\sF$ and $\sG$
along a subset of codimension at least two, we are free to replace these sheaves
by their double duals, and assume henceforth that the following holds.

\begin{asswlog}
  The sheaves $\sF$ and $\sG$ are reflexive.
\end{asswlog}
\CounterStep

Combining the Harder-Narasimhan filtration and a Jordan-Hölder-filtration as in
Remark~\ref{rem:RHNF}, choose a filtration of $\sF$, say $0 = \sF_0 \subsetneq
\sF_1 \subsetneq \cdots \subsetneq \sF_k = \sF$ such that the following holds.
\begin{enumerate}
\item\label{il:1} The quotients $\sQ_{i+1} := \sF_{i+1}/\sF_i$ are torsion-free
  and α-stable for all $i$.
\item\label{il:2} The sequence of ranks, $(\rank \sF_i)_{0 ≤ i ≤ k}$, is
  strictly increasing.
\item\label{il:3} The sequence of slopes, $\bigl( μ_{α}(\sQ_i) \bigr)_{1 ≤ i ≤
    k}$, is decreasing.
\end{enumerate}
Taking reflexive tensor products with $\sG$, we obtain a filtration of $\sF
\boxtimes \sG$,
\begin{equation}\label{eq:FTP}
  0 = \sF_0 \boxtimes \sG \subsetneq \sF_1 \boxtimes \sG \subsetneq \cdots \subsetneq \sF_k \boxtimes \sG = \sF \boxtimes \sG,
\end{equation}
where each term $\sF_i \boxtimes \sG$ is saturated in $\sF_{i+1} \boxtimes \sG$.
Consequently there exists a big, open subset $X° ⊆ X$ where all sheaves $\sF$,
$\sG$, $\sF_i$ and $\sF_i \boxtimes \sG$, as well as all quotients $\sQ_i$ and
$(\sF_{i+1}\boxtimes \sG)/(\sF_i\boxtimes \sG)$ are locally free.  Since two
reflexive sheaves are isomorphic if and only if they agree over $X°$, the
quotients given by the filtration~\eqref{eq:FTP} can be identified as follows,
\begin{equation}\label{eq:XP2}
  \left( \factor{\sF_{i+1}\boxtimes \sG}{\sF_i\boxtimes \sG} \right)^{**} = \sQ_{i+1} \boxtimes \sG.
\end{equation}
The slopes of these sheaves are computed as follows.
\begin{equation}\label{eq:fx3}
  \begin{aligned}
    μ_{α}(\sQ_{i+1} \boxtimes \sG) & = μ_{α}(\sQ_{i+1}) + μ_{α}(\sG) && \text{Slope of product}\\
    & ≤ μ_{α}(\sQ_1) + μ_{α}(\sG) && \text{by~\ref{il:3}} \\
    & ≤ μ_{α}^{\max}(\sF) + μ_{α}^{\max}(\sG)
  \end{aligned}
\end{equation}

\subsubsection*{Step 2: Proof of Claim~\ref*{il:A} in case where $\sF$ or $\sG$ are α-stable}
\approvals{
  Daniel & yes \\
  Stefan & yes \\
  Thomas & yes
}

The roles of $\sF$ and $\sG$ being symmetric, consider the case where $\sG$ is
$α$-stable.  By assumption, the quotient sheaves $\sQ_i \boxtimes \sG$ will thus
be α-semistable.  Inequality~\eqref{eq:fx3} therefore implies that any morphism
$\sA → \sQ_i \boxtimes \sG$ will be zero, if $\sA$ is any semistable subsheaf of
slope $μ_{α}(\sA) > μ_{α}^{\max}(\sF) + μ_{α}^{\max}(\sG)$.  It follows that any
morphism $\sA → \sF \boxtimes \sG$ will be zero, and that the α-slope of any
semistable subsheaf $\sB ⊆ \sF \boxtimes \sG$ is bounded.  In other words, we
have
$$
μ_{α}(\sB) ≤ μ^{\max}_{α}(\sF)+μ^{\max}_{α}(\sG).
$$
On the other hand, $\sF \boxtimes \sG$ does contain the semistable subsheaf
$\sF_1 \boxtimes \sG = \sQ_1 \boxtimes \sG$, whose slope equals
$μ^{\max}_{α}(\sF) + μ_{α}(\sG) = μ^{\max}_{α}(\sF) + μ^{\max}_{α}(\sG)$.  This
proves Claim~\ref{il:A} in case where one of the factors is α-stable.

\subsubsection*{Step 3: Proof of Claim~\ref*{il:A} in general}
\approvals{
  Daniel & yes \\
  Stefan & yes \\
  Thomas & yes
}

Recalling from \ref{il:1} that quotient sheaves $\sQ_{i+1}$ are α-stable, we
have seen in Step~2 that
$$
μ^{\max}_{α} \bigl(\sQ_{i+1} \boxtimes \sG \bigr) = μ^{\max}_{α} \bigl(\sQ_{i+1}
\bigr) + μ^{\max}_{α} \bigl(\sG \bigr) ≤μ^{\max}_{α} \bigl(\sF \bigr) +
μ^{\max}_{α} \bigl(\sG \bigr).
$$
As in Step~2, this implies that the α-slope of any subsheaf $\sB ⊆ \sF \boxtimes
\sG$ is bounded by $μ^{\max}_{α}(\sF)+μ^{\max}_{α}(\sG)$.  On the other hand,
$\sF \boxtimes \sG$ does contain the reflexive product of the maximally
destabilising subsheaves.  The slope of this product equals $μ^{\max}_{α}(\sF) +
μ^{\max}_{α}(\sG)$.  This proves Claim~\ref{il:A} in general.

\subsubsection*{Step 4: Proof of Claim~\ref*{il:B}}
\approvals{
  Daniel & yes \\
  Stefan & yes \\
  Thomas & yes
}

Now assume that $\sF$ and $\sG$ are both α-semistable.  The slope of the product
is computed as follows,
\begin{equation}\label{eq:rrt2}
  \begin{aligned}
    μ^{\max}_{α}(\sF \boxtimes \sG) & = μ^{\max}_{α}(\sF)+μ^{\max}_{α}(\sG) && \text{Claim~\ref{il:A}} \\
    & = μ_{α}(\sF)+μ_{α}(\sG) && \text{Semistability of $\sF$ and $\sG$} \\
    & = μ_{α}(\sF \boxtimes \sG),
  \end{aligned}
\end{equation}
proving its semistability.  This proves Claim~\ref{il:B} and finishes the proof
of Proposition~\ref{prop:prrrp}.  \qed

\subsection{Reduction to a resolution}
\label{ssec:x1}
\approvals{
  Daniel & yes \\
  Stefan & yes \\
  Thomas & yes
}

We end the preparations for the proof of Theorem~\ref{thm:mainthm} with the
following lemma.  Both its claims follow immediately from
Proposition~\ref{prop:invariance}.

\begin{lem}\label{lem:p2}
  In the setting of Theorem~\ref{thm:mainthm}, let $π: \wtilde X → X$ be any
  resolution of singularities.  Then the following holds.
  \begin{enumerate}
  \item The sheaf $\sF$ is α-stable if and only if $π^{[*]}\sF$ is
    $(π^*α)$-stable.
  \item The sheaf $\sF \boxtimes \sG$ is α-semistable if and only if $π^{[*]}
    \sF \boxtimes π^{[*]} \sG$ is $(π^*α)$-semistable.  \qed
  \end{enumerate}
\end{lem}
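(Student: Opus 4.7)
The plan is to derive Lemma~\ref{lem:p2} as a direct consequence of Proposition~\ref{prop:invariance} together with the remark following it. The only substantive task will be to verify the hypothesis of that proposition, namely that the source-side sheaf agrees with the pullback of the target-side sheaf away from the $\pi$-exceptional set.

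As a preliminary reduction, I would replace $\sF$ and $\sG$ by their double duals $\sF^{**}$ and $\sG^{**}$. This is harmless, because slopes, ranks, $\alpha$-(semi)stability, and the reflexive tensor product are all unchanged under double-dualisation: a torsion-free sheaf and its double dual agree on a big open subset of $X$, and in particular $\sF \boxtimes \sG = \sF^{**} \boxtimes \sG^{**}$ and $\pi^{[*]}\sF = \pi^{[*]}(\sF^{**})$. So without loss of generality I may assume that $\sF$ and $\sG$ are reflexive.

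Let $E \subseteq \wtilde X$ denote the $\pi$-exceptional set. Since $\pi$ is an isomorphism on $\wtilde X \setminus E$, the sheaf $\pi^*\sF|_{\wtilde X \setminus E}$ is identified with the reflexive sheaf $\sF|_{X \setminus \pi(E)}$. Hence $\pi^{[*]}\sF = (\pi^*\sF)^{**}$ differs from $\pi^*\sF$ only along a closed subset of codimension at least two in $\wtilde X$. The same argument applied to the reflexive tensor product shows that $\pi^{[*]}\sF \boxtimes \pi^{[*]}\sG$ agrees with $\pi^*(\sF \boxtimes \sG)$ away from $E$, again up to a codimension-two modification. Since the existence of subsheaves of prescribed rank and of prescribed intersection number with $\pi^*\alpha$ is detected in codimension one, this codim-2 discrepancy does not affect the conclusion of Proposition~\ref{prop:invariance}(2).

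With these verifications in place, part~(1) follows from Proposition~\ref{prop:invariance}(2) applied to the pair $\sF$ on $X$ and $\pi^{[*]}\sF$ on $\wtilde X$: the bijection between subsheaves of given rank and intersection number identifies $\alpha$-destabilising subsheaves of $\sF$ with $(\pi^*\alpha)$-destabilising subsheaves of $\pi^{[*]}\sF$, and it preserves the strict inequality that distinguishes stability from semistability. Part~(2) is then obtained by repeating the same argument verbatim for the pair $\sF \boxtimes \sG$ and $\pi^{[*]}\sF \boxtimes \pi^{[*]}\sG$. I do not anticipate a serious obstacle: the only delicate point is the bookkeeping around the implicit double duals in $\pi^{[*]}$ and in $\boxtimes$, and this is disposed of by the preliminary reduction to reflexive sheaves together with the observation that Proposition~\ref{prop:invariance}(2) is insensitive to codimension-two modifications.
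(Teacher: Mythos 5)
Your proposal is correct and follows exactly the route the paper takes: the paper proves Lemma~\ref{lem:p2} by simply asserting that both claims follow immediately from Proposition~\ref{prop:invariance}, and your write-up supplies precisely the verification (reduction to reflexive sheaves, agreement of $π^{[*]}\sF$ with $π^*\sF$ away from the exceptional set) that makes this immediate. One cosmetic slip: $(π^*\sF)^{**}$ need not agree with $π^*\sF$ in codimension two --- the torsion and non-reflexivity of $π^*\sF$ typically live on the exceptional \emph{divisor} --- but this is harmless, since what Proposition~\ref{prop:invariance}(2) actually requires, and what you correctly establish, is agreement away from the exceptional set.
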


\subsection{Proof of Theorem~\ref{thm:mainthm} }
\label{ssec:x2}
\approvals{
  Daniel & yes\\
  Stefan & yes \\
  Thomas & yes
}

We have seen in Proposition~\ref{prop:prrrp} that to prove
Theorem~\ref{thm:mainthm}, it suffices to show that the reflexive tensor product
of any two α-stable sheaves is α-semistable.  So, let $\sF$ and $\sG$ be any two
α-stable, torsion-free, coherent sheaves on $X$.

Combining a resolution of singularities with a classical result of Rossi,
\cite{Rossi68}, we find a smooth, projective variety $\wtilde X$ and a
birational morphism $π: \wtilde X → X$ such that $π^{[*]} \sF$ and $π^{[*]} \sG$
are both locally free.  By Lemma~\ref{lem:p2}, it suffices to establish
$(π^*α)$-semistability of $π^{[*]} \sF \otimes π^{[*]} \sG$.  To simplify
notation we replace $X$ by $\wtilde X$ and assume without loss of generality
that the following holds.

\begin{asswlog}\label{awlog:1}
  The variety $X$ is smooth.  The sheaves $\sF$ and $\sG$ are locally free.
\end{asswlog}

Under these assumptions, Theorem~\ref{thm:mainthm} has been shown by Toma in
\cite[Prop.~6.1]{CP11} if the class α is rational and big.  To show
Theorem~\ref{thm:mainthm} in the general case, let $\sA ⊂ \sF \otimes \sG$ be
any proper subsheaf.  Recall from Remark~\ref{rem:toma} that we can find a
sequence of big, rational classes $β_i ∈ \Stab(\sF) ∩ \Stab(\sG)$ with $\lim β_i
= α$.  Toma's result \cite[Prop.~6.1]{CP11} applies to show that $\sF \otimes
\sG$ is stable with respect to the $β_i$, and hence
$$
μ_{β_i}(\sA) ≤ μ_{β_i}(\sF \otimes \sG) \quad \text{for all } i ∈ \bN.
$$
Using continuity of intersection numbers, we may pass to the limit and obtain
the desired inequality for $\sA$, thus finishing the proof of
Theorem~\ref{thm:mainthm}.  \qed

%
%
\svnid{$Id: 05.tex 214 2015-03-10 12:29:00Z kebekus $}

\section{Bogomolov-Gieseker inequalities}
\label{sec:BGI}
\subversionInfo
\approvals{
  Daniel & yes \\
  Stefan & yes \\
  Thomas & yes
}

We first prove a Bogomolov-Gieseker inequality for locally free sheaves on a
smooth projective surface which are semistable with respect to movable classes.
This is a minor generalisation of \cite[4.7]{Miyaoka87}, which considers
rational classes only.

\begin{thm}[Bogomolov-Gieseker Inequality]\label{thm:BGI}
  Let $X$ be a smooth, projective surface, let $\sE$ be a torsion-free sheaf of
  $\sO_X$-modules, of rank $r$.  If $\sE$ is semistable with respect to a
  movable curve class $α ∈ \Mov(X) \setminus \{0\}$, then
  \begin{equation}\label{eq:BGI}
    Δ(\sE) := 2r · c_2(\sE) - (r-1) · c_1²(\sE) ≥ 0
  \end{equation}
  If $\sE$ is not locally free, then Inequality~\eqref{eq:BGI} is strict.
\end{thm}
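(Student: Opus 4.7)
The plan is to combine three reductions: first to the locally free case, then to the $\alpha$-stable case via the Jordan-Hölder filtration and the Hodge index theorem, and finally from $\alpha$-stable sheaves to Miyaoka's rational-polarisation result using the openness/approximation results of Section~\ref{sec:openness}.

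First I would reduce to the case where $\sE$ is locally free. On a smooth surface every reflexive sheaf is locally free, so $\sE^{**}$ is locally free; it inherits $\alpha$-semistability from $\sE$ because $\sE^{**}/\sE$ is supported on finitely many points (so any subsheaf of $\sE^{**}$ and its intersection with $\sE$ share the same rank and first Chern class). The identity $\Delta(\sE) = \Delta(\sE^{**}) + 2r \cdot \text{length}(\sE^{**}/\sE)$ then reduces the main inequality to $\sE^{**}$ and, as a bonus, delivers the strict version whenever $\sE$ is not locally free.

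Assuming $\sE$ locally free and $\alpha$-semistable, I would take a Jordan-Hölder filtration (Corollary~\ref{cor:JH}) with $\alpha$-stable graded pieces $\sQ_1, \dots, \sQ_k$ of equal $\alpha$-slope $\mu_\alpha(\sE)$. A direct Chern-class computation on the surface gives, for every short exact sequence $0 \to \sF \to \sG \to \sQ \to 0$,
\[
\frac{\Delta(\sG)}{\rank \sG} = \frac{\Delta(\sF)}{\rank \sF} + \frac{\Delta(\sQ)}{\rank \sQ} - \frac{\bigl(\rank \sQ \cdot c_1(\sF) - \rank \sF \cdot c_1(\sQ)\bigr)^2}{\rank \sG \cdot \rank \sF \cdot \rank \sQ}.
\]
Whenever $\mu_\alpha(\sF) = \mu_\alpha(\sQ)$, the class $D := \rank \sQ \cdot c_1(\sF) - \rank \sF \cdot c_1(\sQ)$ satisfies $D \cdot \alpha = 0$. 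Since on a smooth surface the movable cone coincides with the nef cone, the Hodge index theorem — applied directly when $\alpha^2 > 0$, or after a small ample perturbation of $\alpha$ when $\alpha^2 = 0$ — yields $D^2 \leq 0$. Iterating the identity through the filtration therefore gives
\[
\frac{\Delta(\sE)}{\rank \sE} \geq \sum_{i=1}^k \frac{\Delta(\sQ_i)}{\rank \sQ_i},
\]
which reduces the problem to showing $\Delta(\sQ_i) \geq 0$ for each $\alpha$-stable $\sQ_i$ (which, after passing to its double dual, may be assumed locally free).

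The remaining step, and also the main obstacle, is to handle an $\alpha$-stable, locally free sheaf $\sQ$ for an arbitrary — possibly irrational, possibly non-big — movable class $\alpha$. Here I would invoke Remark~\ref{rem:toma}: the class $\alpha$ is the limit of a sequence of big \emph{rational} movable classes $\beta_n$ for which $\sQ$ remains $\beta_n$-stable. Miyaoka's Bogomolov-Gieseker inequality for rational movable classes on surfaces, \cite[Thm.~4.7]{Miyaoka87}, then gives $\Delta(\sQ) \geq 0$, and since the discriminant is independent of the polarisation this concludes the proof. The openness-of-stability results of Section~\ref{sec:openness}, in particular Theorem~\ref{thm:toma}, are precisely what make this approximation step legal when $\alpha$ lies on the boundary of the movable cone.
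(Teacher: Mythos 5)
Your proposal is correct, but it follows a genuinely different route from the paper. The paper argues by contradiction and reruns Miyaoka's cohomological argument from scratch for an arbitrary movable class: it replaces $\sE$ by $\sE^* \otimes \sE^{**}$ --- which is again $\alpha$-semistable by the tensor-product theorem, Theorem~\ref{thm:mainthm}, the key technical input --- so that one may assume $\sE$ locally free with trivial determinant; assuming $\Delta(\sE)<0$ it then shows via Riemann--Roch on $Y=\bP(\sE)$ that either $h^0\bigl(Y,\sO_Y(m\cdot\mathbb{1})\bigr)$ or $h^2\bigl(Y,\sO_Y(m\cdot\mathbb{1})\bigr)$ grows like $m^{\dim Y}$, and excludes both possibilities because $\Sym^m\sE$ and $\Sym^m\sE^*$ are $\alpha$-semistable of slope zero (Corollary~\ref{cor:symm}) and hence receive no nonzero map from a line bundle of positive $\alpha$-slope. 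You instead black-box Miyaoka's inequality for big rational classes, reach those classes via the approximation of Remark~\ref{rem:toma}, and handle the strictly semistable case by the classical Jordan--Hölder/discriminant-identity/Hodge-index reduction. The paper itself notes, in the remark following the theorem, that the \emph{stable} case can be obtained exactly as you do; what you add is the reduction of the semistable case to the stable one, which the paper circumvents by working with symmetric powers. Both proofs ultimately rest on the same approximation results of Section~\ref{sec:openness} --- the paper through the proof of Theorem~\ref{thm:mainthm}, you directly --- so neither is logically lighter, but yours avoids the cohomological growth estimates on $\bP(\sE)$ at the cost of the extension computation. One small repair: the ``small ample perturbation'' in your Hodge-index step is unnecessary and, as written, would destroy the orthogonality $D\cdot\alpha=0$ that you need; argue instead that if $D^2>0$ then $D^{\perp}$ is negative definite by the Hodge index theorem, which is incompatible with the nonzero nef class $\alpha$ lying in $D^{\perp}$, so $D^2\le 0$ holds whether or not $\alpha^2>0$.
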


\begin{rem}
  If the sheaf $\sE$ in Theorem \ref{thm:BGI} is even $α$-stable, then this
  theorem can be deduced from \cite[4.7]{Miyaoka87}, using the results of
  Section \ref{sec:openness}.
\end{rem}

\subsection{Preparation for the proof of Theorem~\ref*{thm:BGI}}
\approvals{
  Daniel & yes \\
  Stefan & yes \\
  Thomas & yes
}

If $X$ is any normal projective variety, if $A$ is any Cartier divisor on $X$
and $i ∈ \bN$ any number, recall that the function $m \mapsto h^i \bigl( X,\,
\sO_X(m· A) \bigr)$ grows asymptotically at most like $m^{\dim X}$.  To
prepare for the proof of Theorem~\ref{thm:BGI}, we recall the following standard
generalization, cf.~\cite[Part I, Lecture III, Prop.~3.3]{MP97}.

\begin{lem}\label{lem:growth}
  Let $X$ be any projective variety, let $A$, $B$, $C ∈ \operatorname{Div}(X)$
  be any three Cartier-divisors and $i ∈ \bN$ be any number.  If $h^i \bigl( X,
  \sO_X(m· A+B) \bigr)$ grows asymptotically at least like $m^{\dim X}$,
  then $h^i \bigl( X, \sO_X(m· A+B+C) \bigr)$ has the same asymptotic growth
  rate,
  \begin{equation}\label{eq:lem:growth}
    h^i \bigl( X, \sO_X(m· A+B) \bigr) \sim
    h^i \bigl( X, \sO_X(m· A+B+C) \bigr).
  \end{equation}
  In particular, taking $C = -B$ it follows that
  $$
  m^{\dim X} \lesssim h^i \bigl( X, \sO_X(m· A+B) \bigr) \sim h^i \bigl( X,
  \sO_X(m· A) \bigr) \lesssim m^{\dim X},
  $$
  so that equality of growth rates holds.  \Publication{\qed}
\end{lem}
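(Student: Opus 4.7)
The plan is to reduce the statement to the case where $C$ is effective, and then exploit the standard short exact sequence associated with the inclusion of an effective Cartier divisor. First, I would decompose $C$ as a difference $C = C_+ - C_-$ of effective Cartier divisors, which is possible on the projective variety $X$ by fixing a very ample divisor $H$ and taking $n$ large enough that both $C + nH$ and $nH$ are very ample, hence linearly equivalent to effective divisors.

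Next, I would establish the following auxiliary claim: for any effective Cartier divisor $D$ and any Cartier divisors $A'$, $B'$, if $h^i \bigl( X, \sO_X(m·A'+B') \bigr) \gtrsim m^{\dim X}$, then $h^i \bigl( X, \sO_X(m·A'+B'+D) \bigr) \sim h^i \bigl( X, \sO_X(m·A'+B') \bigr)$. To prove this, I would twist the structure sequence $0 \to \sO_X(-D) \to \sO_X \to \sO_D \to 0$ with $\sO_X(m·A'+B'+D)$ to obtain
$$
0 \to \sO_X(m·A'+B') \to \sO_X(m·A'+B'+D) \to \sO_D(m·A'+B'+D) \to 0,
$$
and then take the long exact sequence of cohomology. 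Since $\dim D = \dim X - 1$, the standard polynomial boundedness of cohomology of coherent sheaves twisted by Cartier divisors (proved by induction on dimension using the very same kind of short exact sequences) bounds every $h^j \bigl( D, \sO_D(m·A'+B'+D) \bigr)$ by $O(m^{\dim X - 1})$. Combined with the hypothesis that the middle terms grow at least like $m^{\dim X}$, this forces the two middle cohomology dimensions in the long exact sequence to have identical asymptotic growth.

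Finally, I would chain two applications of the auxiliary claim. The first, with $B' = B$ and $D = C_+$, yields $h^i \bigl( X, \sO_X(m·A+B+C_+) \bigr) \sim h^i \bigl( X, \sO_X(m·A+B) \bigr) \gtrsim m^{\dim X}$. The second, after rewriting $m·A+B+C_+ = m·A+(B+C)+C_-$ and applying the claim with $B' = B+C$ and $D = C_-$, gives $h^i \bigl( X, \sO_X(m·A+B+C_+) \bigr) \sim h^i \bigl( X, \sO_X(m·A+B+C) \bigr)$. Composing the two asymptotic equalities proves the lemma. The ``in particular'' statement then drops out by taking $C = -B$, together with the elementary upper bound $h^i \bigl( X, \sO_X(m·A) \bigr) \lesssim m^{\dim X}$ coming from the same polynomial boundedness principle.

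The main obstacle is the polynomial bound on cohomology of twisted coherent sheaves on $D$, which is the only non-formal ingredient; everything else is bookkeeping. Strictly speaking this bound is proved by an induction on $\dim X$ running parallel to the lemma itself, so care must be taken to organise the proof so that the inductive hypothesis is available when needed, or to simply invoke the classical statement, e.g.~from \cite[Part I, Lecture III, Prop.~3.3]{MP97}, which is already cited immediately before the lemma.
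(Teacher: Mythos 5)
Your proposal is correct and follows essentially the same route as the paper: the paper likewise writes $C$ as a difference of two effective (very ample, prime) divisors and compares cohomology groups via the ideal-sheaf sequence of the divisor, using the classical $O(m^{\dim X-1})$ bound on the terms supported on the divisor. One cosmetic point: in your second application the growth hypothesis $\gtrsim m^{\dim X}$ is known for the \emph{larger} twist $m\cdot A+B+C_+$ rather than for $m\cdot A+B+C$, so the auxiliary claim should be stated symmetrically --- which is harmless, since the long exact sequence bounds the \emph{difference} of the two dimensions by $O(m^{\dim X-1})$ unconditionally.
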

\Preprint{
\begin{proof}
  We prove Equation~\eqref{eq:lem:growth} using induction on the dimension of
  $X$.  If $\dim X = 0$, there is nothing to show.  Let us therefore assume that
  $\dim X > 0$, and that the claim was shown for all varieties of smaller
  dimension.  Now, if $P ⊂ X$ is any prime Cartier divisor, consider the ideal
  sheaf sequence
  $$
  0 → \sO_X(mA +B) →\sO_X(mA +B+P) → \sO_X(mA +B+P)|_P → 0.
  $$
  The following excerpt of the long exact cohomology sequence,
  \begin{multline*}
    \overbrace{H^{i-1}\bigl( P, \sO_X(mA +B+P)|_P)}^{\lesssim m^{\dim X-1}} → H^i\bigl(\sO_X(mA +B) \bigr) → \\
    → H^i\bigl(\sO_X(mA +B+P) \bigr) → \underbrace{H^i\bigl( P, \sO_X(mA
      +B+P)|_P)}_{\lesssim m^{\dim X-1}},
  \end{multline*}
  will then show that we have equality of asymptotic growth rates,
  $$
  h^i \bigl( X, \sO_X(m· A+B) \bigr) \sim h^i \bigl( X, \sO_X(m· A+B+P)
  \bigr).
  $$
  The same line of argument, using the sequence
  $$
  0 → \sO_X(mA +B-P) →\sO_X(mA +B) → \sO_X(mA +B)|_P → 0,
  $$
  yields an analogous equality,
  $$
  h^i \bigl( X, \sO_X(m· A+B) \bigr) \sim h^i \bigl( X, \sO_X(m· A+B-P) \bigr).
  $$
  In essence, we have shown that the asymptotic growth rate does not change when
  adding or subtracting prime divisors.  Write $C$ as a difference of two very
  ample prime divisors to conclude.
\end{proof}}

\subsection{Proof of Theorem~\ref*{thm:BGI}}
\approvals{
  Daniel & yes \\
  Stefan & yes \\
  Thomas & yes
}

The proof roughly follows the line of argument given in \cite[4.3]{Miyaoka87} or
\cite[Part I, Lecture III, 3.9]{MP97}.

\subsubsection*{Step 1: Setup}
\approvals{
  Daniel & yes \\
  Stefan & yes \\
  Thomas & yes
}

We argue by contradiction.  To be precise, we assume that the following holds.

\begin{assumption}\label{ass:ertz}
  There exists a smooth, projective surface $X$, a movable class $α ∈ \Mov(X)
  \setminus \{0\}$ and an α-semistable, torsion-free sheaf $\sE$ such that
  $Δ(\sE) < 0$.
\end{assumption}

These assumptions can be simplified.  Recall from \cite[Sect.~3.4]{MR2665168}
that
$$
Δ(\sE) ≥ Δ(\sE^{**}) ≥ Δ( \sE^* \otimes \sE^{**}),
$$
and that the first inequality is strict if $\sE$ is not locally free.  Since
$\sE^* \otimes \sE^{**}$ is likewise α-semistable by Theorem~\ref{thm:mainthm},
we are free to replace $\sE$ by $\sE^* \otimes \sE^{**}$ throughout the
argument.  Recalling that reflexive sheaves are locally free in codimension two,
this amounts to assuming the following.

\begin{asswlog}\label{asswlog:aswer}
  The sheaf $\sE$ is locally free and has trivial determinant, $\det \sE \cong
  \sO_X$.
\end{asswlog}

\subsubsection*{Step 2: Notation}
\approvals{
  Daniel & yes \\
  Stefan & yes \\
  Thomas & yes
}

Choose a Cartier divisor $H ∈ \operatorname{Div}(X)$ such that both $H$ and
$H-K_X$ are ample.  Recalling that α is movable and therefore $α² ≥ 0$, it
follows from the Hodge index theorem that $H · α > 0$.  The same holds for the
intersection with $H-K_X$, and we obtain the following inequalities that we note
for future reference,
\begin{equation}\label{eq:HIT}
  μ_{α} \bigl( \sO_X(H) \bigr) > 0 \quad\text{and}\quad μ_{α} \bigl( \sO_X(H-K_X) \bigr) > 0.
\end{equation}

Set $Y := \bP(\sE)$, and denote the bundle map by $π : Y → X$.  Choose a divisor
$\mathbb 1 ∈ \operatorname{Div}(Y)$ such that $\sO_Y(\mathbb 1) \cong
\sO_{\bP(\sE)}(1)$.

\subsubsection*{Step 3: Conclusion}
\approvals{
  Daniel & yes \\
  Stefan & yes \\
  Thomas & yes
}

Recall from the computations of \cite[Part I, Lecture III, 3.6]{MP97} that the
assumption $Δ(\sE) < 0$ made in \ref{ass:ertz} implies that $\mathbb 1^{\dim Y}
> 0$.  By Riemann-Roch,
\begin{equation}\label{eq:B1}
  m^{\dim Y} \sim χ \bigl( \sO_Y(m · \mathbb 1) \bigr) = \sum_{i=0}^{\dim Y} (-1)^i · h^i\bigl( Y,\, \sO_Y(m · \mathbb 1) \bigr).
\end{equation}
The Leray spectral sequence implies that $h^i\bigl( Y,\, \sO_Y(m · \mathbb
1) \bigr) = h^i\bigl( X,\, π_* \sO_Y(m · \mathbb 1) \bigr)$, which vanishes
for $i ≥ 3$.  Equation~\eqref{eq:B1} therefore yields
\begin{equation}\label{eq:B2}
  h^0\bigl( Y,\, \sO_Y(m · \mathbb 1) \bigr)
  \sim m^{\dim Y} \quad\text{or}\quad h²\bigl( Y,\, \sO_Y(m ·
  \mathbb 1) \bigr) \sim m^{\dim Y}.
\end{equation}
The subsequent Steps 5 and 6 will show that neither of these two possibilities
is realised in our setup finishing the proof.

\subsubsection*{Step 5: Growth rate of $h^0(\cdots)$}
\approvals{
  Daniel & yes \\
  Stefan & yes \\
  Thomas & yes
}

Assume that $h^0\bigl( Y,\, \sO_Y(m · \mathbb 1) \bigr)$ grows
asymptotically like $m^{\dim Y}$.  Combining the results obtains so far, this
implies the following.
\begin{align*}
  m^{\dim Y} & \sim h^0\bigl( Y,\, \sO_Y(m · \mathbb 1 - π^*H ) \bigr) && \text{Lemma~\ref{lem:growth}}\\
  & = h^0\bigl( X,\, π_* \sO_Y(m) \otimes \sO_X(-H) \bigr) && \\
  & = \dim_{\bC} \Hom_X \bigl( \sO_X(H),\, \Sym^{m} \sE \bigr).
\end{align*}
This is absurd.  In fact, we have seen in \eqref{eq:HIT} that the α-slope of the
invertible sheaf $\sO_X(H)$ is positive.  On the other hand, recall from
Assumption~\ref{asswlog:aswer} that $\Sym^m \sE$ has vanishing first Chern
class.  It is thus α-semistable, with $μ_{α}(\Sym^m \sE) = 0$, by
Corollary~\ref{cor:symm}.  As we have seen in Corollary~\ref{cor:msss}, any
morphism $\sO_X(H) → \Sym^m \sE$ is therefore zero.  It follows that $h^0\bigl(
Y,\, \sO_Y(m · \mathbb 1) \bigr)$ cannot grow like $m^{\dim Y}$.

\subsubsection*{Step 6: Growth rate of $h²(\cdots)$}
\approvals{
  Daniel & yes \\
  Stefan & yes \\
  Thomas & yes
}

We repeat the argument of Step~5 with minor variations.  Assuming that $h²
\bigl( Y,\, \sO_Y(m · \mathbb 1) \bigr)$ grows like $m^{\dim Y}$, we obtain,
using again Lemma~\ref{lem:growth}
\begin{align*}
  m^{\dim Y} & \sim h² \bigl( Y,\, \sO_Y(m · \mathbb 1 + π^*H ) \bigr) && \text{Lemma~\ref{lem:growth}}\\
  & = h²\bigl( X,\, π_* \sO_Y(m) \otimes \sO_X(H) \bigr) && \text{Leray spectral sequence}\\
  & = h^0\bigl( X,\, \Sym^{m} \sE^* \otimes \sO_X(-(H-K_X)) \bigr) && \text{Serre duality}\\
  & = \dim_{\bC} \Hom_X \bigl( \sO_X(H-K_X),\, \Sym^{m} \sE^* \bigr).
\end{align*}
As before, recall from \eqref{eq:HIT} that the α-slope of the invertible sheaf
$\sO_X(H-K_X)$ is positive and observe that $\Sym^m \sE^*$ is α-semistable with
slope $μ_{α}(\Sym^m \sE^*) = 0$.  It follows that $h² \bigl( Y,\, \sO_Y(m ·
\mathbb 1) \bigr)$ cannot grow like $m^{\dim Y}$.  \qed

%
%
\svnid{$Id: 06.tex 219 2015-03-12 07:34:05Z kebekus $}

\section{Flatness criteria}
\label{sec:flatness}
\subversionInfo

\subsection{Flatness}
\approvals{
  Daniel & yes \\
  Stefan & yes \\
  Thomas & yes
}

In this section we discuss various flatness results.  The relevant notion is the
following.

\begin{defn}[Flat and projectively flat sheaves]\label{defn:flat}
  If $Y$ is any algebraic variety, and $\sG$ is any locally free, analytic sheaf
  of rank $r$ on the underlying complex space $Y^{an}$, we call $\sG$
  \emph{flat} if it is defined by a representation
  $$
  π_1(Y^{an}) → \operatorname{Gl}(r,\bC)
  $$
  of the topological fundamental group $π_1(Y^{an})$.  A locally free, algebraic
  sheaf on $Y$ is called flat if and only if the associated analytic sheaf is
  flat.
  
  In a similar vein, call $\sG$ \emph{projectively flat} if $\bP(\sG)$ is given by a
  representation $π_1(Y^{an}) → \operatorname{PGl}(r,\bC)$.
\end{defn}

We refer the reader to \cite{GKP13} for a discussion of flat sheaves on
algebraic varieties.  Projective flatness is discussed in \cite{MR3030068} and
in the references quoted there.

\subsubsection{Flatness for sheaves on surfaces}
\approvals{
  Daniel & yes \\
  Stefan & yes \\
  Thomas & yes
}

Simpson proved in \cite{MR1179076} an important flatness criterion for
semistable locally free sheaves.  In fact, let $X$ be a projective manifold $X$
of dimension $n$ and let $\sE$ be a locally free sheaf that is $H$-semistable
for some ample divisor $H$.  Suppose further that
$$
c_1(\sE) · H^{n-1} = (c_1(\sE)² - c_2(\sE)) · H^{n-2} = 0.
$$
Then, $\sE$ is flat.  We first generalise this in case $\dim X = 2$, replacing
$H$ by a movable class on the surface.

\begin{thm}[Criterion for flatness]\label{thm:flat:1}
  Let $X$ be a smooth, projective surface and $α ∈ \Mov(X)$ a movable class.
  Let $\sE$ be any torsion-free coherent sheaf on $X$ with
  \begin{equation}\label{eq:simpson}
    c_1(\sE) · α = c_1(\sE)² - c_2(\sE) = 0.
  \end{equation}
  If $\sE$ is α-semistable and $α² > 0$, then $\sE$ is a locally free, flat
  sheaf.
\end{thm}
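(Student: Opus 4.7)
The plan is to combine the movable-polarisation Bogomolov-Gieseker inequality of Theorem~\ref{thm:BGI} with Simpson's classical flatness criterion for sheaves that are semistable with respect to an \emph{ample} polarisation. The bridge between the two settings will be provided by the openness statement Theorem~\ref{thm:toma}, applied to the Jordan-Hölder factors of $\sE$ so as to perturb $\alpha$ into the interior of the movable cone.

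First I would extract the Chern-class consequences of the hypothesis. Since $\sE$ is $\alpha$-semistable, Theorem~\ref{thm:BGI} gives $\Delta(\sE) = 2r\,c_2(\sE) - (r-1)\,c_1(\sE)^2 \geq 0$, with strict inequality unless $\sE$ is locally free. Substituting $c_2(\sE) = c_1(\sE)^2$ from the hypothesis yields $(r+1)\,c_1(\sE)^2 \geq 0$. On the surface $X$, movable curve classes coincide (under Poincaré duality) with nef divisor classes, so $\alpha$ is nef with $\alpha^2 > 0$, and the Hodge index theorem applied to $c_1(\sE) \in \alpha^{\perp}$ forces $c_1(\sE)^2 \leq 0$. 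Combining these inequalities yields $c_1(\sE)^2 = 0$, hence $c_1(\sE) \equiv 0$ numerically, $c_2(\sE) = 0$ and $\Delta(\sE) = 0$; the strictness clause of Theorem~\ref{thm:BGI} then forces $\sE$ to be locally free.

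Next I would pass to a Jordan-Hölder filtration $0 \subsetneq \sE_1 \subsetneq \cdots \subsetneq \sE_k = \sE$ with respect to $\alpha$ (Corollary~\ref{cor:JH}), whose $\alpha$-stable graded pieces $\sQ_i := \sE_i/\sE_{i-1}$ have common slope zero. A direct expansion of Chern classes along the filtration produces the standard discriminant-additivity identity
\[
  \Delta(\sE) \;=\; \sum_i \tfrac{r}{r_i}\,\Delta(\sQ_i) \;-\; \sum_{i<j} r_i r_j\,\bigl(\tfrac{c_1(\sQ_i)}{r_i} - \tfrac{c_1(\sQ_j)}{r_j}\bigr)^2.
\]
Theorem~\ref{thm:BGI} applied to each torsion-free $\sQ_i$ gives $\Delta(\sQ_i) \geq 0$, and each squared difference lies in $\alpha^{\perp}$ and is therefore non-positive by Hodge index. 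The right-hand side is thus a sum of non-negative contributions, and the vanishing $\Delta(\sE) = 0$ forces $\Delta(\sQ_i) = 0$ for every $i$ together with $c_1(\sQ_i)/r_i = c_1(\sQ_j)/r_j$ numerically for all $i,j$. Combined with $\sum_i c_1(\sQ_i) = c_1(\sE) \equiv 0$, this yields $c_1(\sQ_i) \equiv 0$ and $c_2(\sQ_i) = 0$ for every $i$, and each $\sQ_i$ is locally free.

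To finish, I would produce an ample polarisation with respect to which $\sE$ is semistable. Pick any ample class $\beta$, which lies in the interior of $\Mov(X)$ since movable equals nef on the surface; Theorem~\ref{thm:toma} applied to each of the finitely many $\sQ_i$ yields numbers $e_i > 0$ such that $\alpha + \varepsilon\beta \in \Stab(\sQ_i)$ for all $\varepsilon \in [0,e_i]$. For $0 < \varepsilon < \min_i e_i$ the class $H := \alpha + \varepsilon\beta$ lies in the interior of $\Mov(X)$, hence is ample, and every $\sQ_i$ is $H$-stable of $H$-slope zero (using $c_1(\sQ_i) \equiv 0$). Consequently $\sE$, as an iterated extension of $H$-semistable sheaves of equal $H$-slope, is itself $H$-semistable, and Simpson's classical flatness criterion \cite{MR1179076} applies to $\sE$ with respect to $H$, giving the conclusion. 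The main technical obstacle in this plan is the discriminant-additivity step: without propagating the vanishing $c_1 \equiv 0$ to each Jordan-Hölder factor, one could not guarantee that $\sE$ remains semistable after the perturbation of $\alpha$ into the ample cone.
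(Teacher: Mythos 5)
Your proposal is correct, and it reaches the conclusion by a genuinely different route from the paper, though both arguments draw on the same toolbox (Theorem~\ref{thm:BGI}, the Hodge index theorem, Theorem~\ref{thm:toma}, and Simpson's results). The opening Chern-class computation is identical to the paper's. After that, the paper argues by induction on the length of a Jordan-Hölder filtration: it peels off the last factor, writes $0 → \sF → \sE → \sQ → 0$, uses the two-term Chern-class relations together with BGI and Hodge index to propagate the vanishing \eqref{eq:simpson} to $\sF$ and $\sQ$, obtains flatness of each by the inductive hypothesis (the base case being the stable case, handled via Theorem~\ref{thm:toma} and a rational ample class in $\Stab(\sE) ∩ \Mov(X)°$), and finally quotes Simpson to conclude that the resulting extension of flat bundles is flat. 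You instead treat the whole Jordan-Hölder filtration at once via the discriminant-additivity identity of Huybrechts--Lehn, force $c_1(\sQ_i) \equiv 0$ and $c_2(\sQ_i)=0$ for \emph{every} graded piece simultaneously, perturb $α$ into the ample cone so as to stabilise all $\sQ_i$ at once, and then apply Simpson's criterion a single time to $\sE$ itself. This avoids the induction and the ``extension of flat bundles is flat'' step entirely, at the cost of importing the (standard, but unproved in this paper) additivity formula for $Δ$ along a filtration; your $k=2$ case of that formula is exactly the pair of relations \eqref{eq:ET:1}--\eqref{eq:ET:2} used in the paper.

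Two small points to tighten. First, Simpson's theorem requires an honest ample \emph{divisor}, whereas your class $H = α + ε β$ is a priori only a real point of the ample cone; you should invoke Theorem~\ref{thm:openness} (each $α + εβ$ is big and stabilises $\sQ_i$, so $\bigcap_i \Stab(\sQ_i) ∩ \Mov(X)°$ is open and non-empty) to replace $H$ by a nearby \emph{rational} ample class --- this is precisely the care the paper takes when it selects the rational point $h'$ in its Step~1. Second, to apply Simpson one should note, as the paper does, that a slope-semistable bundle viewed as a Higgs bundle with zero Higgs field is semistable in Simpson's sense. Neither point affects the soundness of your strategy.
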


\begin{rem}
  The assumptions made in Theorem~\ref{thm:flat:1} are necessary.  For an
  example where $\sE$ is α-semistable, where $α² = 0$ and where $\sE$ is not
  flat, let $π : X → Y$ be any ruled surface, let $F$ be any fibre and set $α :=
  [F]$ and $\sE := \sO_X(F) \oplus \sO_X(-F)$
\end{rem}

\begin{thm}[Criterion for projective flatness]\label{thm:flat:2a}
  Let $X$ be a smooth, projective surface and $α ∈ \Mov(X) \setminus \{ 0\}$ be
  a movable class.  Let $\sE$ be any α-stable, torsion-free coherent sheaf on
  $X$ of rank $r$.  If equality holds in the Bogomolov-Gieseker inequality,
  $$
  2r · c_2(\sE) = (r-1) · c_1(\sE)²,
  $$
  then $\sE$ is a projectively flat, locally free sheaf that is semistable with
  respect to every $β ∈ \Mov(X)$.  If we assume in addition that $c_1(\sE) · α =
  c_1(\sE)² = 0$, i.e., we assume that \eqref{eq:simpson} holds, then either
  $\sE$ or $\sE^*$ is nef.
\end{thm}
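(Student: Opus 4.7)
The plan is to apply Theorem~\ref{thm:flat:1} to the endomorphism bundle $\sE nd(\sE) = \sE \otimes \sE^*$ after reducing to a big polarisation, and then transfer flatness back to projective flatness of $\sE$. The first step is immediate: Theorem~\ref{thm:BGI} asserts strict positivity of $Δ$ for non-locally-free $α$-semistable sheaves, so from $Δ(\sE) = 2r · c_2(\sE) - (r-1) · c_1(\sE)^2 = 0$ one concludes that $\sE$ is locally free.

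For projective flatness I would invoke Remark~\ref{rem:toma} to produce a big class $β \in \Stab(\sE)$. By Theorem~\ref{thm:mainthm}, $\sE nd(\sE)$ is $β$-semistable, and a standard Chern-character computation via $\operatorname{ch}(\sE) \cdot \operatorname{ch}(\sE^*)$ gives $c_1(\sE nd(\sE)) = 0$ and $c_2(\sE nd(\sE)) = Δ(\sE) = 0$. Since $β^2 > 0$, Theorem~\ref{thm:flat:1} applies and shows that $\sE nd(\sE)$ is locally free and flat. To descend from there to projective flatness of $\sE$, I use the characteristic-zero trace decomposition $\sE nd(\sE) = \sO_X \oplus \sE nd_0(\sE)$: the trace is preserved by the flat connection, so $\sE nd_0(\sE)$ is itself flat, and as the adjoint bundle of $\bP(\sE)$ its flatness is equivalent to $\bP(\sE)$ being flat, i.e.\ to projective flatness of $\sE$.

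For $γ$-semistability with respect to an arbitrary $γ \in \Mov(X)$: given a saturated subsheaf $\sF \subsetneq \sE$ of rank $s$ with torsion-free quotient $\sQ$, set $ξ := r · c_1(\sF) - s · c_1(\sE)$. A direct filtration computation combined with $Δ(\sE) = 0$ gives the identity $ξ^2 = r(r-s) · Δ(\sF) + rs · Δ(\sQ)$. Taking $\sF$ to sit inside the refined Harder--Narasimhan filtration of $\sE$ for $γ$ makes the graded pieces of $\sF$ and $\sQ$ individually $γ$-stable, and careful bookkeeping with Theorem~\ref{thm:BGI} applied to these pieces yields $ξ^2 \geq 0$. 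Both $α$ and $γ$ lie in the closed positive cone $\overline{Q^+}$ of the Lorentzian intersection form on $N^1(X)_{\bR}$ (being nef on the surface). If $\sE$ were $γ$-unstable then $ξ · γ > 0$ combined with $ξ^2 \geq 0$ would force $ξ \in \overline{Q^+}$; self-duality of $\overline{Q^+}$ would then yield $ξ · α \geq 0$, contradicting the strict inequality $ξ · α < 0$ coming from $α$-stability of $\sE$.

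Finally, assume $c_1(\sE) · α = c_1(\sE)^2 = 0$. Since $α \in \Mov(X) \setminus \{0\}$ is nef on the surface, $α^2 \geq 0$, and the Hodge index theorem forces $c_1(\sE) \equiv λ · α$ numerically for some $λ \in \bR$. The twist $\sE \otimes L$ by a $\bQ$-line bundle $L$ with $c_1(L) \equiv -λα/r$ then has numerically trivial determinant; after a finite étale cover to clear the torsion obstruction to lifting the projective $π_1(X)$-representation, this twist becomes honestly flat and hence nef. Undoing the twist, $\sE$ is nef when $λ \geq 0$ and $\sE^*$ is nef when $λ \leq 0$, since the twisting $\bQ$-line bundle is then a non-negative (resp.\ non-positive) multiple of the nef class $α$. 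The main obstacle in this plan is the $ξ^2 \geq 0$ inequality in the previous paragraph, which requires delicate combinatorics with the HN/JH filtrations so that Theorem~\ref{thm:BGI} may be applied systematically to the $γ$-stable graded pieces.
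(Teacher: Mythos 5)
Your reduction to local freeness (via strictness of the inequality in Theorem~\ref{thm:BGI} for non-locally-free sheaves) is exactly the paper's first step, and your treatment of the final nefness claim is essentially correct and in one respect cleaner than the paper's: the observation that $c_1(\sE)\cdot\alpha = c_1(\sE)^2 = 0$ forces $c_1(\sE)$ to be a real multiple of the nef class $\alpha$ (zero if $\alpha^2>0$ by Hodge index, proportional to $\alpha$ if $\alpha^2=0$ since a totally isotropic subspace of the Lorentzian lattice $N^1(X)_{\bR}$ is at most one-dimensional) replaces the paper's Lemmas~\ref{lem:61-1} and~\ref{lem:61-2}; the paper then passes from nefness of $\det\sE$ or $\det\sE^*$ to nefness of $\sE$ or $\sE^*$ via the honest bundle $\Sym^r(\sE)\otimes\det\sE^*$ rather than your $\bQ$-twist-and-cover, which avoids the fractional line bundle. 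The two central claims, however, are where your route diverges and where it has genuine gaps. For projective flatness, your application of Theorem~\ref{thm:flat:1} to $\sE\otimes\sE^*$ is legitimate ($\beta$-semistability from Theorem~\ref{thm:mainthm}, $c_1=0$, $c_2=\Delta(\sE)=0$, $\beta^2>0$), so $\sE\otimes\sE^*$ is indeed flat. But the descent to projective flatness of $\sE$ is not the formal step you present: the flat structure produced by Theorem~\ref{thm:flat:1} is just a flat structure on the underlying vector bundle, and nothing guarantees that it preserves the trace decomposition or the algebra structure, nor that the resulting monodromy on the traceless part lands in the image of the adjoint representation of $\operatorname{PGl}(r)$ and reconstructs $\bP(\sE)$. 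Only the implication ``$\bP(\sE)$ flat $\Rightarrow$ adjoint bundle flat'' is formal; the converse for stable $\sE$ is true but its proof is essentially the Hermitian--Einstein argument you are trying to avoid. The paper sidesteps this entirely: Theorem~\ref{thm:toma} produces a rational \emph{ample} class stabilising $\sE$, and then the classical Kobayashi--Hitchin-type result of \cite[Thm.~1.1 and Prop.~1.1]{MR3030068} (stable plus equality in Bogomolov--Gieseker for an ample polarisation implies projectively flat) is quoted directly.

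The second gap is the one you flag yourself, and it is real. The identity $\xi^2 = r(r-s)\Delta(\sF)+rs\,\Delta(\sQ)$ only yields $\xi^2\ge 0$ if both $\Delta(\sF)\ge 0$ and $\Delta(\sQ)\ge 0$, i.e.\ if Theorem~\ref{thm:BGI} applies to both $\sF$ and $\sQ$; but the quotient by a term of the $\gamma$-Harder--Narasimhan filtration is not $\gamma$-semistable in general, and passing to the full refined filtration only controls a weighted \emph{sum} of the $\xi_{ij}^2$, not each term, while $\alpha$-stability of $\sE$ gives no sign information on the individual $\xi_{ij}\cdot\alpha$. This is the wall-structure argument of Huybrechts--Lehn, and making it work up to the boundary of the nef cone is precisely the ``delicate combinatorics'' you defer. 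The paper gets the statement for free: \cite{MR3030068} already asserts semistability of $\sE$ with respect to \emph{every} ample class, and since $\SStab(\sE)$ is closed and convex (Remark~\ref{rem:cvx}) while $\Mov(X)$ is the closure of the ample cone on a surface, semistability propagates to all of $\Mov(X)$. In summary: your skeleton is reasonable, but the two steps you treat as routine (adjoint-bundle flatness $\Rightarrow$ projective flatness, and $\xi^2\ge 0$ for an arbitrary destabiliser) are exactly the points where the paper instead invokes the external result of Jahnke--Radloff, and neither admits the short justification you give.
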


\subsubsection{Flatness in higher dimensions}
\approvals{
  Daniel & yes \\
  Stefan & yes \\
  Thomas & yes
}

We use the flatness criterion of Theorem~\ref{thm:flat:1} to generalise
Theorem~1.19 from \cite{GKP13}.

\begin{thm}[Flatness criterion]\label{thm:flat:2b}
  Let $X$ be a normal, projective, $\bQ$-factorial variety of dimension $n$ with
  only canonical singularities.  Let $\sE$ be a reflexive sheaf on $X$ and $H ∈
  \Div(X)$ an ample divisor.  Suppose that $\sE$ is $H$-semistable and that
  there exists a desingularisation $π: \wtilde X → X$ such that the following
  two equalities hold,
  \begin{align}
    \label{eq:vanish1} 0 & = c_1(\sE) · H^{n-1} \\
    \label{eq:vanish2} 0 & = c_1 \bigl( π^{[*]} \sE \bigr)² · (π^* H)^{n-2} - c_2 \bigl( π^{[*]} \sE \bigr) · (π^*H)^{n-2}.
  \end{align}
  Then, there exists a quasi-étale morphism $γ: \what X → X$ such that $γ^{[*]}
  \sE$ is a locally free, flat sheaf on $\what X$.
\end{thm}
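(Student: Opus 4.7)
The plan is to reduce the higher-dimensional statement to the two-dimensional flatness criterion of Theorem~\ref{thm:flat:1} by cutting $\wtilde X$ down to a general complete intersection surface, then to bootstrap the resulting flat structure back to $\wtilde X$ via a Lefschetz hyperplane argument for fundamental groups, and finally to descend to a quasi-étale cover of $X$ using the machinery of \cite{GKP13}.

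First, I would arrange that $\wtilde\sE := \pi^{[*]}\sE$ is locally free on $\wtilde X$: after composing $\pi$ with a further birational modification, using Rossi's theorem as in Section~\ref{ssec:x2}, the reflexive pull-back becomes locally free, and the Chern-number hypotheses \eqref{eq:vanish1}--\eqref{eq:vanish2} are preserved since any additional exceptional divisors contribute trivially to the top intersections with $\pi^* H$. Set $\wtilde H := \pi^* H$, pick $m \gg 0$, and cut out a smooth surface $\wtilde S = \wtilde D_1 \cap \cdots \cap \wtilde D_{n-2}$ with $\wtilde D_i \in |m\wtilde H|$ general. Proposition~\ref{MRbig} ensures that $\wtilde\sE|_{\wtilde S}$ is $\wtilde H|_{\wtilde S}$-semistable. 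Using Proposition~\ref{prop:invariance} we have $c_1(\wtilde\sE) \cdot \wtilde H^{n-1} = c_1(\sE) \cdot H^{n-1} = 0$, and multiplying both hypotheses by $m^{n-2}$ shows that the pair $(\wtilde\sE|_{\wtilde S},\, \wtilde H|_{\wtilde S})$ satisfies condition \eqref{eq:simpson}. Since $\wtilde H|_{\wtilde S}$ is ample and hence $(\wtilde H|_{\wtilde S})^2 > 0$, Theorem~\ref{thm:flat:1} applies, and $\wtilde\sE|_{\wtilde S}$ is a flat, locally free sheaf, corresponding to a representation $\rho_{\wtilde S}\colon \pi_1(\wtilde S) \to \Gl(r,\bC)$.

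Next, I would extend the flat structure from $\wtilde S$ to $\wtilde X$. Since $\wtilde S$ is a transverse intersection of very ample divisors in the smooth variety $\wtilde X$, the classical Lefschetz theorem provides a surjection $\pi_1(\wtilde S) \onto \pi_1(\wtilde X)$. One must check that $\rho_{\wtilde S}$ factors through this surjection and that the resulting flat bundle on $\wtilde X$ coincides with $\wtilde\sE$. This can be done either by analysing the monodromy around the vanishing cycles produced by a Lefschetz pencil, or by comparing $\wtilde\sE$ with the candidate flat bundle $\sV$: both are $\wtilde H$-semistable locally free sheaves on $\wtilde X$ whose restrictions to a sufficiently general high-degree CI surface are isomorphic, so by the uniqueness of the isomorphism class in the Mehta--Ramanathan picture (together with $\Hom$-vanishing coming from Corollary~\ref{cor:msss}) they must be globally isomorphic. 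This identifies $\wtilde\sE$ itself as a flat bundle, given by a representation of $\pi_1(\wtilde X)$.

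Finally, descent to a quasi-étale cover of $X$ is precisely the content of the framework set up in \cite{GKP13}: under canonical singularities the fundamental groups of the regular locus $X_{\reg}$ and of a resolution $\wtilde X$ are closely related, and a flat bundle on $\wtilde X$ produced from a $\pi_1(\wtilde X)$-representation descends, after passing to a suitable quasi-étale cover $\gamma\colon \what X \to X$ associated to a finite-index subgroup, to a locally free flat sheaf representing $\gamma^{[*]}\sE$. The main obstacle I anticipate is the middle step: ensuring the representation genuinely extends from $\pi_1(\wtilde S)$ to $\pi_1(\wtilde X)$, and that the resulting flat bundle is canonically $\wtilde\sE$ rather than merely a bundle which happens to agree with it on $\wtilde S$. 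Everything else is either straightforward hypothesis-checking or a direct invocation of results from \cite{GKP13}.
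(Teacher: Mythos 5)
Your first and second steps are sound and match the paper's strategy (restriction to a general complete intersection surface, Proposition~\ref{MRbig}, and Theorem~\ref{thm:flat:1}; note only that $\wtilde H|_{\wtilde S}$ is not ample on $\wtilde S$ --- it is the pull-back of the ample class $H|_S$ under the induced resolution $\wtilde S → S$, hence merely nef, big and movable with positive square, which is all that Theorem~\ref{thm:flat:1} requires). The genuine gap is exactly where you suspected it: the passage from a representation of $π_1(\wtilde S)$ to a flat bundle on $\wtilde X$. A surjection $π_1(\wtilde S) \onto π_1(\wtilde X)$ --- which in any case does not follow from ``the classical Lefschetz theorem'', since $\wtilde S$ is cut out by divisors in $|m·π^*H|$ that are only nef and big on $\wtilde X$ --- is useless here: a representation of a group does not factor through a quotient unless it kills the kernel, and there is no reason $ρ_{\wtilde S}$ does. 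Your fallback (b) is circular, because the ``candidate flat bundle $\sV$'' on $\wtilde X$ is precisely what cannot be constructed without the factorisation. Indeed, if your middle step worked one would obtain a representation of $π_1(\wtilde X) \cong π_1(X)$ and hence a flat sheaf on $X$ itself, with no quasi-étale cover needed --- the fact that the theorem only asserts flatness after a cover $γ: \what X → X$ is a signal that this cannot be forced.

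What the paper does instead is to avoid $π_1(\wtilde X)$ entirely. Since $S$ has only ADE singularities, the exceptional fibres of $\wtilde S → S$ are simply connected, so $π_1(\wtilde S) \cong π_1(S)$ and the flat bundle descends to a flat sheaf $\sF$ on $S$ with $\sF \cong \sE|_S$ (using reflexivity of $\sE|_S$). Then, because a canonical variety is a local complete intersection away from a set $Z$ with $\codim_X Z ≥ 3$, the Goresky--MacPherson Lefschetz theorem applied to $S ⊂ X° := X \setminus Z$ gives an \emph{isomorphism} $π_1(S) \cong π_1(X°)$, producing a flat sheaf $\sG$ on $X°$ with $\sG|_S \cong \sE|_S$. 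The map $π_1(X°) → π_1(X)$ need not be injective, and this is precisely why one must pass to the cover $γ:\what X → X$ of \cite[Thm.~1.13]{GKP13}, on which flat sheaves defined on the regular locus extend across the singularities; the extension $\what\sG$ is then identified with $γ^{[*]}\sE$ via boundedness of flat sheaves of fixed rank and the Bertini-type Lemma~\ref{lem:bertstabY}. Your final paragraph gestures at this machinery but places the representation on the wrong group, so the argument as proposed does not close.
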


\begin{rem}
  Notice that condition \eqref{eq:vanish2} does not depend on the choice of $π$,
  see \cite[4.4]{GKP13} for the argument.  It is essential that two
  desingularizations can be dominated by a third one.
\end{rem}

\begin{rem}
  Theorem~\ref{thm:flat:2b} holds if $X$ has only klt singularities provided we
  knew that $π_1(X) \simeq π_1(X°)$, where $X°$ is the locus of quotient
  singularities.
\end{rem}
  
\begin{rem}
  One might ask whether Theorem~\ref{thm:flat:2b} holds in greater generality,
  that is, for a larger class of singular spaces.  What we actually needed is
  the following.
  \begin{enumerate}
  \item A general complete intersection surface $S ⊂ X$ has only rational
    singularities.
  \item The following Lefschetz-type statement holds true: there exists a closed
    set $A ⊂ X$ of codimension at least three such that $π_1(S) \simeq π_1(X
    \setminus A)$.
  \item There exists a quasi-étale cover $\wtilde X → X$ such that
    $\what{π}_1(\wtilde{X}_{\reg}) \simeq \what{π}_1(\wtilde{X})$.
 \end{enumerate}
\end{rem}

\begin{cor}\label{cor:polystable}
  Let $X$ be a normal $\bQ$-factorial projective variety with only canonical
  singularities.  Let $H \ne 0$ be any nef divisor on $X$ and $\sE = \bigoplus
  \sE_i$ a reflexive sheaf on $X$ whose direct summands are $H$-stable.  If
  $c_1(\sE_i) = 0$ for all $i$ and if there exists a desingularisation $π:
  \widetilde X → X$ such that $π^{[*]}\sE$ is locally free and $c_2 \bigl(
  π^{[*]}\sE \bigr) = 0$, then there exists a quasi-étale morphism $γ: \what{X}
  → X$ such that $γ^{[*]}\sE$ is a flat, locally free sheaf on $\what{X}$.
\end{cor}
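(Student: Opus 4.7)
The plan is to reduce the statement to applying Theorem~\ref{thm:flat:2b} to each direct summand $\sE_i$ individually.  Since a direct sum of locally free, flat sheaves is locally free and flat, and since any finite collection of quasi-étale covers of $X$ can be dominated by a single quasi-étale cover (e.g.\ by passing to a common Galois closure), it suffices to produce, for each index $i$, a quasi-étale morphism $γ_i : \what{X}_i → X$ such that $γ_i^{[*]}\sE_i$ is locally free and flat.

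Fix $i$ and verify the hypotheses of Theorem~\ref{thm:flat:2b} for $\sE_i$.  The sheaf $\sE_i$ is reflexive as a direct summand of the reflexive sheaf $\sE$; it is $H$-semistable because it is $H$-stable; and the vanishing $c_1(\sE_i) \cdot H^{n-1} = 0$ is immediate from $c_1(\sE_i) = 0$.  The delicate point --- and the main obstacle --- is to establish
$$
c_1\bigl(π^{[*]}\sE_i\bigr)^2 \cdot (π^*H)^{n-2} - c_2\bigl(π^{[*]}\sE_i\bigr) \cdot (π^*H)^{n-2} = 0. \qquad (\ast)
$$

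To prove $(\ast)$, expand $c_2$ along the decomposition $π^{[*]}\sE = \bigoplus π^{[*]}\sE_i$:
$$
c_2\bigl(π^{[*]}\sE\bigr) = \sum_i c_2\bigl(π^{[*]}\sE_i\bigr) + \sum_{i<j} c_1\bigl(π^{[*]}\sE_i\bigr) \cdot c_1\bigl(π^{[*]}\sE_j\bigr).
$$
Because $c_1(\sE_i) = 0$ on $X$, each class $c_1(π^{[*]}\sE_i)$ is represented by a $π$-exceptional Weil divisor $E_i$ on $\wtilde X$; in particular, $c_1(π^{[*]}\sE_i) \cdot (π^*H)^{n-1} = 0$ by the projection formula.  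Applying Proposition~\ref{MRbig} to $π^{[*]}\sE_i$, the restriction $π^{[*]}\sE_i|_{\wtilde S}$ to a generic complete-intersection surface $\wtilde S \subset \wtilde X$ (cut out by general members of $|m · π^*H|$ for $m \gg 0$) is $(π^*H|_{\wtilde S})$-semistable, so Theorem~\ref{thm:BGI} yields
$$
2 r_i · c_2\bigl(π^{[*]}\sE_i\bigr) \cdot (π^*H)^{n-2} \geq (r_i - 1) · c_1\bigl(π^{[*]}\sE_i\bigr)^2 \cdot (π^*H)^{n-2}.
$$
Meanwhile, the higher-dimensional Hodge index inequality applied on $\wtilde S$, together with the vanishing of $c_1(π^{[*]}\sE_i) \cdot (π^*H)^{n-1}$, forces $c_1(π^{[*]}\sE_i)^2 \cdot (π^*H)^{n-2} \leq 0$ and analogously $E_i \cdot E_j \cdot (π^*H)^{n-2} \leq 0$ (via the associated pseudo-norm).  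Summing the Bogomolov-Gieseker inequalities over $i$, adding the non-positive cross-terms, and comparing with the hypothesis $c_2(π^{[*]}\sE) \cdot (π^*H)^{n-2} = 0$, all individual summands are squeezed to zero.  This yields $(\ast)$ as well as $c_2(π^{[*]}\sE_i) \cdot (π^*H)^{n-2} = 0$ for each $i$.

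A final subtlety is that Theorem~\ref{thm:flat:2b} is stated for an ample divisor, whereas here $H$ is only nef.  This is handled by the approximation $H_\varepsilon := H + \varepsilon A$ for any ample $A$ and small $\varepsilon \in \bQ^+$: openness of stability (Theorem~\ref{thm:openness}) keeps each $\sE_i$ stable with respect to $H_\varepsilon^{n-1}$, while the Chern-class equalities established above are preserved in the class $c_2(π^{[*]}\sE) = 0$ and hence extend to $H_\varepsilon$ by continuity.  Applying Theorem~\ref{thm:flat:2b} with $H_\varepsilon$ in place of $H$ produces the desired $γ_i$, and the combination of these covers completes the proof.
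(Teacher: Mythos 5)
Your strategy diverges from the paper's at the very first step, and the divergence is where the trouble lies. The paper does not treat the summands separately: after perturbing $H$ to an ample divisor $H+\varepsilon H_0$ and using openness of stability to keep each $\sE_i$ stable, it observes that $\sE=\bigoplus\sE_i$ is \emph{polystable} with respect to the perturbed polarisation (all summands stable of slope zero), hence semistable, and then applies Theorem~\ref{thm:flat:2b} \emph{once, to $\sE$ itself}. In that route the Chern class hypotheses only have to be checked for the total sheaf, for which $c_1(\sE)=0$ and $c_2(\pi^{[*]}\sE)=0$ are given, and no combination of covers is needed. By insisting on applying Theorem~\ref{thm:flat:2b} to each $\sE_i$ individually, you create for yourself the strictly harder problem of distributing the condition $c_2(\pi^{[*]}\sE)=0$ among the summands, i.e.\ of proving your identity $(\ast)$.

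Your proof of $(\ast)$ has a genuine gap. The Hodge index theorem on $\wtilde S$ makes the intersection form negative \emph{semidefinite} on the orthogonal complement of the nef and big class $(\pi^*H)|_{\wtilde S}$; this yields $E_i^2\cdot(\pi^*H)^{n-2}\le 0$, but it does \emph{not} yield $E_i\cdot E_j\cdot(\pi^*H)^{n-2}\le 0$ for $i\ne j$ --- mixed products of a negative semidefinite form can have either sign (take $E_j=-E_i$). Without non-positive cross-terms the squeeze does not close: the Whitney identity, the summand-wise Bogomolov--Gieseker inequalities and the Hodge index inequalities are all simultaneously satisfied by, say, two summands with $E_1=E_2$, $E_1^2\cdot(\pi^*H)^{n-2}=-2$ and $c_2\bigl(\pi^{[*]}\sE_i\bigr)\cdot(\pi^*H)^{n-2}=1$ for $i=1,2$, a configuration in which $(\ast)$ fails for both summands. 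So the argument as written does not establish the hypotheses of Theorem~\ref{thm:flat:2b} for the individual $\sE_i$. A smaller point: for merely nef $H$ the class $H^{n-1}$ may lie on the boundary of $\Mov(X)$, so the perturbation step must quote Theorem~\ref{thm:toma} rather than Theorem~\ref{thm:openness}, which requires the stabilising class to be big. The fix is to follow the paper: perturb, note polystability of the whole direct sum, and apply Theorem~\ref{thm:flat:2b} a single time to $\sE$.
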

\begin{proof}
  If $H_0$ is any ample divisor, it will follow from Theorem~\ref{thm:toma} that
  all direct summands $\sE_i$ are stable with respect to $(H + ε · H_0)$, for
  all sufficiently small positive $ε$.  More is true.  Since $c_1(\sE_i) = 0$
  the bundle $\sE$ is polystable with respect to $(H + ε · H_0)$, and in
  particular semistable.  Now apply Theorem~\ref{thm:flat:2b}.
\end{proof}

\subsection{Preparations}
\approvals{
  Daniel & yes \\
  Stefan & yes \\
  Thomas & yes
}

The proof of Theorem~\ref{thm:flat:1} makes use of the following lemmas, which
might be of independent interest.

\begin{lem}\label{lem:61-1}
  Let $X$ be a smooth, projective surface and $Δ ∈ N¹(X)_{\bQ}$ a rational
  divisor class.  Then, either there exists an integral, ample divisor $H$ such
  that $H · Δ = 0$, or either $Δ$ or $-Δ$ is pseudo-effective.
\end{lem}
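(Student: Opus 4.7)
The plan is to prove the contrapositive: assuming neither $Δ$ nor $-Δ$ is pseudo-effective, produce an integral, ample divisor $H$ with $H · Δ = 0$. The case $Δ = 0$ being trivial (since $0$ is pseudo-effective), we may suppose $Δ \ne 0$.

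The first step is to recall that on a smooth projective surface the pseudo-effective cone and the nef cone are dual under the intersection pairing. Indeed, on a surface we have $N¹(X)_{\bR} = N_1(X)_{\bR}$, and a class is movable (non-negative on all effective divisors) if and only if it is nef (non-negative on all irreducible curves), because the irreducible effective divisors coincide with the irreducible curves. Hence $\Mov(X) = \operatorname{Nef}(X)$, and Remark~\ref{rem:BDPP} then says that $Δ$ is pseudo-effective if and only if $Δ · N ≥ 0$ for every nef class $N$.

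The second step is a separation argument applied to the rational hyperplane $Δ^\perp := \{ α ∈ N¹(X)_{\bR} : α · Δ = 0 \}$. Suppose for contradiction that $Δ^\perp$ does not meet the open ample cone. Since the ample cone is open and connected, the continuous linear functional $α \mapsto α · Δ$ has constant strict sign on it. Passing to the closure (the nef cone, by Kleiman's criterion), this sign persists weakly, so either $N · Δ ≥ 0$ for every nef $N$ or $N · Δ ≤ 0$ for every nef $N$. By step one, this gives pseudo-effectivity of $Δ$ or of $-Δ$, contradicting the standing assumption. Therefore $Δ^\perp ∩ \operatorname{Amp}(X) \ne \emptyset$.

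In the final step I would exploit rationality. Since $Δ ∈ N¹(X)_{\bQ}$, the hyperplane $Δ^\perp$ is cut out by a linear form with rational coefficients, and its rational points are dense in it. Combined with the fact that $Δ^\perp ∩ \operatorname{Amp}(X)$ is a non-empty open subset of $Δ^\perp$, we can choose a rational ample class $[A] ∈ N¹(X)_{\bQ}$ lying on $Δ^\perp$. Clearing denominators gives an integral ample $H$ with $H · Δ = 0$. The only non-formal ingredient is the surface duality in step one; everything else is elementary convex geometry and the density of rationals on a rational hyperplane, so I do not anticipate a substantive obstacle.
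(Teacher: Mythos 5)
Your proposal is correct and follows essentially the same route as the paper's proof: identify the interior of the movable cone with the ample cone on a surface, use the duality of Remark~\ref{rem:BDPP} to translate "the functional $\bullet · Δ$ has constant sign on the ample cone" into pseudo-effectivity of $Δ$ or $-Δ$, and otherwise use density of rational points on the rational hyperplane $Δ^\perp$ to find an integral ample $H$ with $H · Δ = 0$. The only difference is that you phrase the argument as a contrapositive, which is purely a reorganisation.
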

\begin{proof}
  Since $X$ is a surface, divisors and curves coincide, and the interior of the
  movable cone equals the ample cone.  Since $Δ$ is a rational class by
  assumption, the hyperplane
  $$
  Δ^\perp := \{ α ∈ N_1(X)_{\bR} \:|\: α · Δ = 0\}
  $$
  is likewise rational, and rational points are dense there.  If $Δ^\perp$
  intersects the interior of the movable cone at all, the intersection will thus
  contain a rational point.  In other words, there will be integral ample
  divisors $H$ with $H · Δ = 0$.

  Assuming that no such divisor $H$ exists therefore amounts to assuming that
  the function $\bullet · Δ$ does not have any zeros in $\Mov(X)°$ and is either
  strictly positive or strictly negative there.  As seen in
  Remark~\vref{rem:BDPP}, this means that either $Δ$ or $-Δ$ is
  pseudo-effective.
\end{proof}

\begin{lem}\label{lem:61-2}
  Let $X$ be a smooth, projective surface and $Δ ∈ N¹(X)_{\bQ}$ a rational,
  pseudo-effective divisor class with $Δ² = 0$.  Assume that there exists a
  movable class $α ∈ \Mov(X) \setminus \{0\}$ such that $α· Δ = 0$.  Then, $Δ$
  is nef.
\end{lem}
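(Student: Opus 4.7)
The plan is to apply the Zariski decomposition on the smooth projective surface $X$ and then invoke the Hodge index theorem. Since $Δ$ is rational and pseudo-effective, it admits a Zariski decomposition $Δ = P + N$, where $P ∈ N¹(X)_{\bQ}$ is nef, $N$ is an effective $\bQ$-divisor with $P · N = 0$ (and $N = 0$ or the intersection matrix of the components of $N$ is negative definite). The goal is to show that $N = 0$.

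First I would use the hypotheses to extract two orthogonality relations. From $P · N = 0$ and $Δ² = 0$ we obtain $P² + N² = 0$. Next, observe that on a smooth projective surface the movable cone of curves coincides with the nef cone of divisors (since $N_1(X) = N¹(X)$ and Definition~\ref{defn:movable} is exactly the nef condition), so $α$ is a nonzero nef class. Because $P$ is nef and $N$ is effective, both intersections $α · P$ and $α · N$ are non-negative, and their sum $α · Δ$ vanishes; hence $α · P = 0$ and $α · N = 0$.

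Now I would argue by contradiction, assuming $N \ne 0$. The negative definiteness of the intersection form on the components of $N$ forces $N² < 0$, and therefore $P² = -N² > 0$. At this point the Hodge index theorem applies: since $P² > 0$ and $α · P = 0$, its orthogonal complement carries a negative definite form, so $α² ≤ 0$ with equality only if $α \equiv 0$. On the other hand, $α$ is nef and nonzero, so $α² ≥ 0$; combining these forces $α \equiv 0$, contradicting the assumption. Hence $N = 0$ and $Δ = P$ is nef.

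The conceptually delicate step is the identification of movable classes on a surface with nef divisor classes, so that the Hodge index theorem can be used against $α$ itself. Once this is in place, Zariski decomposition plus Hodge index closes the argument essentially formally; the minor technical nuisance is ensuring that the Zariski decomposition exists with rational coefficients for a pseudo-effective rational class, which is the classical result of Fujita (and is standard on surfaces).
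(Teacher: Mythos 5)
Your proposal is correct and follows essentially the same route as the paper: Zariski decomposition $Δ = P + N$, the identification of movable with nef classes on a surface to get $α · P = α · N = 0$, and the Hodge index theorem to kill $N$. The only cosmetic difference is that you run the contradiction through $P² > 0 \Rightarrow α \equiv 0$, whereas the paper applies Hodge index directly to the nonzero nef class $α$ to conclude $P² = 0$ and hence $N² = 0$.
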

\begin{proof}
  Consider the Zariski decomposition, $Δ = [P] + [N]$, where $P$ and $N$ are
  $\bQ$-divisors, where $P$ is nef, $N = \sum a_i N_i$ is effective, $N² < 0$
  unless $N = 0$, and finally $P · N_i = 0$ for all $i$.

  Observe that the movable (=nef) class $α$ intersects $P$ and $N$
  non-negatively.  The assumption that $α · Δ = 0$ therefore gives that $α · P =
  α · N = 0$.  Since $α \ne 0$ and $α² ≥ 0$, the Hodge index theorem thus
  implies that the non-negative number $P²$ is actually zero.  Since $0 = Δ² =
  P² + N²$, we obtain that $N²=0$, and hence that $N=0$.  It follows that $Δ =
  [P]$ is nef.
\end{proof}

\subsection{Proof of Theorem~\ref*{thm:flat:1}}
\approvals{
  Daniel & yes \\
  Stefan & yes \\
  Thomas & yes
}
\CounterStep

Using the assumption that $α²>0$, the Hodge index theorem asserts that
$c_1(\sE)² ≤ 0$.  Thus, the assumption $c_1(\sE)² = c_2(\sE)$, combined with the
Bogomolov-Gieseker inequality of Theorem~\ref{thm:BGI} yields $c_1(\sE)² =
c_2(\sE) = 0$, and taking into account the further assumption made in
\eqref{eq:simpson} the Hodge index theorem hence asserts that $c_1(\sE) = 0$.
In particular, equality holds in the Bogomolov-Gieseker inequality of
Theorem~\ref{thm:BGI}, and $\sE$ is therefore locally free.

\subsubsection*{Step 1: Proof in case where $\sE$ is stable}
\approvals{
  Daniel & yes \\
  Stefan & yes \\
  Thomas & yes
}

If $\sE$ is $α$-stable, Theorem~\ref{thm:toma} and Remark~\ref{rem:toma} allow
to find a rational point $h' ∈ [\sE]^\perp ∩ \Mov(X)° ∩ \Stab(\sE)$.  If $H'$ is
a divisor with class $h'$, $[H'] = h'$, then $H'$ is ample, and it follows from
classical theorems that $\sE$ is flat, e.g.~see \cite[Cor.~3.10]{MR1179076}.  To
apply Simpson's theorem, we view $\sE$ as a Higgs bundle with zero Higgs field
$θ = 0$ and notice that slope semi-stability in our sense implies semi-stability
of Higgs bundles.  \qed

\subsubsection*{Step 2: Proof in general}
\approvals{
  Daniel & yes \\
  Stefan & yes \\
  Thomas & yes
}

Consider a minimal-length Jordan-Hölder filtration of $\sE$, as discussed in
Corollary~\ref{cor:JH},
$$
0 = \sE_0 \subsetneq \sE_1 \subsetneq \cdots \subsetneq \sE_k = \sE.
$$
The proof proceeds by induction on the length of the filtration, denoted by $k$.
If $k = 1$, then $\sE$ is α-stable, and the assertion has been shown in Step~1.
We will therefore assume for the remainder of the proof that $k > 1$, and that
the claim has already been shown for all α-semistable bundles satisfying
Equations~\eqref{eq:simpson} that admit a shorter Jordan-Hölder filtration.

For brevity of notation write $\sF := \sE_{k-1}$ and $\sQ := \sE/\sF$.  We
obtain a sequence
$$
0 → \sF → \sE → \sQ → 0,
$$
where $\sF$ is α-semistable, reflexive and therefore locally free, and where
$\sQ$ is α-stable and torsion-free, and $c_1(\sF) · α = c_1(\sQ) · α = 0$.  As
before, the Hodge index theorem will thus imply the following.
\begin{equation}\label{eq:X2b}
  c_1(\sF)² ≤ 0 \quad\text{and}\quad c_1(\sQ)² ≤ 0, \quad \text{each with equality iff } c_1(·)=0.
\end{equation}
We aim to show that the vanishing \eqref{eq:simpson} holds for $\sF$ and $\sQ$.
Since both $\sF$ and $\sQ$ admit Jordan-Hölder filtrations of length less than
$k$, flatness of $\sF$ and $\sQ$ then follows from the induction hypothesis.
The bundle $\sE$ is then presented as an extension of two flat bundles and is
therefore flat by \cite[3.10]{MR1179076}.

To this end, we use the following relations between the Chern classes of $\sF$
and $\sQ$,
\begin{align}
  0 & = c_1(\sE)² = c_1(\sF)² + c_1(\sQ)² + 2 · c_1(\sF) · c_1(\sQ) \label{eq:ET:1}\\
  0 & = c_2(\sE) = c_2(\sF) + c_2(\sQ) + c_1(\sF) · c_1(\sQ) \label{eq:ET:2}
\end{align}
These equations yield
\begin{align*}
  \textstyle \frac{1}{2}c_1(\sF)² + \frac{1}{2} c_1(\sQ)² & = c_2(\sF) + c_2(\sQ) && \text{\eqref{eq:ET:1} and \eqref{eq:ET:2}} \\
  & ≥ \underbrace{\textstyle \frac{\rank \sF-1}{2\rank \sF}}_{\mathclap{\text{non-neg., }< 1/2}} · \; c_1(\sF)² + \underbrace{\textstyle \frac{\rank \sQ-1}{2\rank \sQ}}_{\mathclap{\text{non-neg., }< 1/2}} · \; c_1(\sQ)² && \text{BGI, Theorem~\ref{thm:BGI}.}
\end{align*}
Combined with \eqref{eq:X2b}, this shows that $c_1(\sF)² = c_1(\sQ)² = 0$.
Applying the Bogomolov-Gieseker Inequality, Theorem~\ref{thm:BGI}, once more to
$\sF$ and $\sQ$, we see that $c_2(\sF) ≥ 0$ and $c_2(\sQ) ≥ 0$, and it follows
from \eqref{eq:ET:2} that both numbers vanish.  The induction hypothesis
therefore applies to show that $\sF$ and $\sQ$ are flat.  As noted above, this
proves flatness of $\sE$.  \qed

\subsection{Proof of Theorem~\ref*{thm:flat:2a}}
\approvals{
  Daniel & yes \\
  Stefan & yes \\
  Thomas & yes
}

Let $\sE$ be any $α$-stable, torsion-free sheaf on $X$ such that equality holds
in the Bogomolov-Gieseker inequality.  By Theorem~\ref{thm:BGI}, the sheaf $\sE$
is then locally free.

\subsubsection*{Step 1: Proof of projective flatness}
\approvals{
  Daniel & yes \\
  Stefan & yes \\
  Thomas & yes
}

We have seen in Theorem~\ref{thm:toma} that $\sE$ is stable also with respect to
a suitable rational class $β$ that is contained in the interior of the movable
cone.  Since $X$ is a surface, the movable cone equals the nef cone, and its
interior consists of ample classes.  Projective flatness is now a consequence of
a classical theorem, see \cite[Thm.~1.1 and Prop.~1.1]{MR3030068} for a
discussion and for further references.  This paper also asserts that $\sE$ is
semistable with respect to any ample class.  Semistability with respect to all
classes in $\Mov(X) \setminus \{ 0\}$ then follows from Remark~\ref{rem:cvx},
since $\Mov(X)$ is the closure of the ample cone.

\subsubsection*{Step 2: Proof of nefness}
\approvals{
  Daniel & yes \\
  Stefan & yes \\
  Thomas & yes
}

Assuming that the Equalities~\eqref{eq:simpson} hold, we aim to prove that $\sE$
or $\sE^*$ are nef, or equivalently, that $\Sym^r(\sE)$ or $\Sym^r(\sE^*)$ are
nef vector bundles, where $r := \rank \sE$, cf.~\cite[Thm.~6.2.12]{Laz04-II}.
Since both $\Sym^r(\sE) \otimes \det \sE^*$ \emph{and} its dual are nef by
projective flatness, \cite[Sect.~1.3]{MR3030068}, it suffices to show that
either $\det \sE$ or $\det \sE^*$ is nef.

To this end, observe that if there exists an integral, ample divisor $H$ such
that $H · c_1(\sE) = 0$, then Theorem~\ref{thm:toma} allows to find a rational
point $h' ∈ c_1(\sE)^\perp ∩ \Mov(X)° ∩ \Stab(\sE)$.  Any divisor $H'$ with
class $h'$ is then ample, and it follows from Simpson's theorem that $\sE$ is
flat, \cite[Cor.~3.10]{MR1179076}.  The class $c_1(\sE)$ will then vanish, and
we are done.

We will thus assume that no such divisor $H$ exists.  One of the divisors $\det
\sE$ or $\det \sE^*$ is then pseudo-effective by Lemma~\ref{lem:61-1}.  Using
the assumption that $c_1(\sE) · α = c_1(\sE)² = 0$, nefness now follows from
Lemma~\ref{lem:61-2}.  \qed

\subsection{Proof of Theorem~\ref*{thm:flat:2b}}
\approvals{
  Daniel & yes \\
  Stefan & yes \\
  Thomas & yes
}

Our subsequent proof of Theorem~\ref{thm:flat:2b} will use the following minimal
generalisation of a result in \cite{GKP13}.  The proof is exactly the same as
the one given there, and therefore omitted.

\begin{lem}[{Iterated Bertini-type theorem for bounded families, cf.~\cite[Cor.~5.6]{GKP13}}]\label{lem:bertstabY}
  Let $X$ be a normal, projective variety of dimension $\dim X ≥ 2$.  Let $\sE$
  be a coherent, reflexive sheaf of $\sO_X$-modules, and let $F$ be a bounded
  family of locally free sheaves.  Given an ample line bundle $\sL ∈ \Pic(X)$, a
  sufficiently increasing sequence $0 \ll m_1 \ll m_2 \ll \cdots \ll m_{k}$ and
  general elements $H_i$ of a basepoint-free linear system contained in
  $|\sL^{\otimes m_i}|$ with associated complete intersection variety $S := H_1
  ∩ \cdots ∩ H_k$, then the following holds for all sheaves $\sF ∈ F$.  The
  sheaf $\sF$ is isomorphic to $\sE$ if and only if $\sF|_S$ is isomorphic to
  $\sE|_S$.  \qed
\end{lem}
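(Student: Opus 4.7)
The ``only if'' direction is tautological, so the content is in the ``if'' direction: given an isomorphism $\phi_S \colon \sE|_S \xrightarrow{\sim} \sF|_S$ over the complete intersection $S$, produce an isomorphism $\sE \to \sF$ over $X$. My strategy is to work with the coherent sheaves $\sHom_X(\sE, \sF)$ and $\sHom_X(\sF, \sE)$ and show that for the $m_i$ chosen sufficiently large, restriction induces bijections
$$
\Hom_X(\sE, \sF) \xrightarrow{\;\sim\;} \Hom_S(\sE|_S, \sF|_S) \quad\text{and}\quad \Hom_X(\sF, \sE) \xrightarrow{\;\sim\;} \Hom_S(\sF|_S, \sE|_S),
$$
uniformly in $\sF \in F$, together with the analogous bijections for $\End_X(\sE)$ and $\End_X(\sF)$.

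The bijections are obtained by iterated restriction along the sequences
$$
0 \to \sHom_X(\sE, \sF) \otimes \sL^{-m_i} \to \sHom_X(\sE, \sF) \to \sHom_{H_i}\bigl(\sE|_{H_i}, \sF|_{H_i}\bigr) \to 0
$$
(and similarly after each subsequent cut), where at the $i$-th step one works on the variety $H_1 \cap \dots \cap H_{i-1}$. Since $F$ is a bounded family and $\sE$ is fixed, the sheaves $\sHom_X(\sE, \sF)$, $\sHom_X(\sF, \sE)$, $\sHom_X(\sE, \sE)$ and $\sHom_X(\sF, \sF)$ (with $\sF$ varying) all belong to a bounded family. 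Uniform Serre vanishing therefore allows the choice of $m_1 \gg 0$ so that the relevant $H^0$ and $H^1$ groups twisted by $\sL^{-m_1}$ vanish for every $\sF \in F$; the choices $m_2 \gg m_1$, $m_3 \gg m_2$, and so on are then made to kill the analogous cohomology after each successive restriction. This yields the displayed bijections via the long exact sequences.

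Given the bijections, lift both $\phi_S$ and its inverse $\psi_S \colon \sF|_S \to \sE|_S$ uniquely to $\phi \in \Hom_X(\sE, \sF)$ and $\psi \in \Hom_X(\sF, \sE)$. The compositions $\psi \circ \phi \in \End_X(\sE)$ and $\phi \circ \psi \in \End_X(\sF)$ restrict to $\id_{\sE|_S}$ and $\id_{\sF|_S}$ respectively. Applying the corresponding bijections $\End_X(\sE) \cong \End_S(\sE|_S)$ and $\End_X(\sF) \cong \End_S(\sF|_S)$ and using that $\id_\sE$ and $\id_\sF$ are the \emph{unique} global lifts of the identities on $S$, one concludes $\psi \circ \phi = \id_\sE$ and $\phi \circ \psi = \id_\sF$, so that $\phi$ is the desired isomorphism.

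The main obstacle is establishing the vanishing uniformly over $\sF \in F$: the choice of $m_i$ must depend only on $F$ and $\sE$, not on the individual member $\sF$. This is precisely where the boundedness hypothesis enters, via the standard fact that on a bounded family of coherent sheaves on a projective variety Serre vanishing kicks in uniformly with respect to a fixed ample polarisation $\sL$. A secondary subtlety is that $\sE$ is only reflexive, not locally free; however $\sF$ and $\sF|_S$ are locally free by hypothesis (the latter after a general choice of $S$, using \cite[Prop.~5.1]{GKP13}), so $\sHom_X(\sE, \sF) \cong \sE^{*} \otimes \sF$ is well behaved, and the ideal sheaves of the $H_i$ in the appropriate iterated hypersurface are locally principal once the linear systems $|\sL^{\otimes m_i}|$ are basepoint free, so the short exact sequences above remain exact. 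With these points addressed, the proof proceeds exactly as in \cite[Cor.~5.6]{GKP13}.
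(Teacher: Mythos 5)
Your overall architecture is the right one, and it is essentially the argument of \cite[Cor.~5.6]{GKP13} to which the paper defers: reduce to showing that restriction induces bijections on the four $\Hom$-spaces, prove this by iterated use of the twisted ideal-sheaf sequences, use boundedness of $F$ to make the $m_i$ uniform, and conclude via the uniqueness of the lifts of $\mathrm{id}_{\sE|_S}$ and $\mathrm{id}_{\sF|_S}$. The endgame and the role of boundedness are correctly identified.

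There is, however, a genuine gap at the central vanishing step. You attribute the vanishing of $H^0$ and $H^1$ of $\sHom_X(\sE,\sF)\otimes\sL^{-m}$ for $m\gg 0$ to ``uniform Serre vanishing'', but Serre vanishing concerns \emph{positive} twists and says nothing here. The $H^0$-vanishing for negative twists is elementary for torsion-free sheaves, but the $H^1$-vanishing is the Enriques--Severi--Zariski lemma, and it requires $\sHom_X(\sE,\sF)$ to have depth at least $2$ at every closed point --- equivalently, in this setting, to be reflexive on a normal variety of dimension at least $2$. This is not a cosmetic point: for a merely torsion-free sheaf the statement is false (e.g.\ for the ideal sheaf $\sI_p$ of a point on a surface, $H^1\bigl(X,\sI_p\otimes\sL^{-m}\bigr)\neq 0$ for all $m\gg0$). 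This is precisely where the hypotheses ``$\sE$ reflexive'' and ``$\dim X\geq 2$'' enter, and it is also why, at each subsequent step of the iteration, you must know that the restriction of $\sHom_X(\sE,\sF)\cong\sE^{*}\otimes\sF$ to the partial complete intersection $H_1\cap\cdots\cap H_{i}$ is again reflexive on a normal variety of dimension at least $2$ --- this is guaranteed by \cite[Prop.~5.2]{GKP13} for general members of the linear systems, and it forces $k\leq\dim X-1$. Once you replace the appeal to Serre vanishing by the Enriques--Severi--Zariski lemma (in its uniform form over the bounded family $\{\sE^{*}\otimes\sF\}_{\sF\in F}$) and track reflexivity through the iteration, the proof closes up as you describe.
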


\begin{proof}[Proof of Theorem~\ref*{thm:flat:2b}]
  Let $γ: \widehat X → X$ be the finite, quasi-étale cover guaranteed by
  \cite[Thm.~1.13]{GKP13}, which has the property that any locally free, flat
  sheaf defined on $\widehat X_{\reg}^{an}$ extends to a locally free, flat
  sheaf on the whole of $\widehat X$.

  We may assume $H$ to be sufficiently ample in the sense of Mehta-Ramanathan
  and so that Lemma~\ref{lem:bertstabY} may be applied to the linear subsystem
  $γ^*|H|$ of $|γ^*(H)|$ without taking further multiples.  Choose a general
  tuple of hypersurfaces $D_1, …, D_{n-2} ∈ |H|$ and write
  $$
  S := D_1 ∩ \cdots ∩ D_{n-2}.
  $$
  We recall the classical fact, that in codimension two a variety with canonical
  singularities is locally in the Euclidean topology a product of a surface with
  an ADE singularity and of a smooth space of dimension $(\dim X - 2)$, see for
  example~\cite[Prop.~9.3]{GKKP11}.  In particular, the set $Z ⊂ X$ where $X$ is
  \emph{not} locally a complete intersection is small, $\codim_X Z ≥ 3$.  Set
  $X° := X \setminus Z$ and note that $S ⊂ X°$.
  
  Since $S$ is likewise canonical, it is normal with only ADE singularities.  In
  particular, $S$ is $\bQ$-factorial.  By Flenner's version of the
  Mehta-Ramanathan theorem, \cite[Thm.~1.2]{Flenner84}, the restriction $\sE|_S$
  is semistable with respect to the ample class $H|_S$.
  
  Let $\wtilde D_i := π^*(D_i) ∈ | π^*H |$ and set $\wtilde S := \wtilde D_1 ∩
  \cdots ∩ \wtilde D_{n-2}$.  Then $\wtilde S = π^{-1}(S)$ is a smooth surface
  contained in the smooth locus of $\wtilde X$.  For convenience of notation,
  write
  $$
  g := π|_S, \quad \wtilde{\sE} := g^{[*]}(\sE|_S) \quad \text{and} \quad
  \wtilde{H} := g^*(H|_S).
  $$
  We have seen in Proposition~\vref{prop:invariance} that the pull-back sheaf
  $\wtilde{\sE}$ is stable with respect to $\wtilde{H}$.  Equations
  \eqref{eq:vanish1} and \eqref{eq:vanish2} then read as follows,
  $$
  0 = c_1(\wtilde \sE) · \wtilde H \quad \text{and} \quad 0 = c_1(\wtilde
  \sE)² - c_2(\wtilde \sE).
  $$
  For the second equation, we observe that $π^{[*]}(\sE)|_{\wtilde S} $ is
  reflexive and thus equals $\wtilde \sE$.  In particular,
  Theorem~\ref{thm:flat:1} implies that $\wtilde{\sE}$ is flat, hence given by a
  representation $ π_1(\wtilde S) → \Gl(r,\bC)$.  Since $S$ has only canonical
  singularities, $π_1(\wtilde S) \cong π_1(S)$, and the induced representation
  $π_1(S) → \Gl(r,\bC)$ defines a locally free flat sheaf $\sF$ on $S$ such that
  $p^* \sF \simeq \wtilde \sE$.  It follows that the sheaves $\sF$ and $\sE|_S$
  agree outside a finite set of $S$.  Since $\sE|_S$ is reflexive,
  \cite[Proposition~5.2]{GKP13}, the sheaves $\sF$ and $\sE|_S$ are isomorphic.
  Since $X°$ is a locally complete intersection variety, a Lefschetz-type
  theorem of Goresky-MacPherson \cite[II.1.2, Thm.~on p.~153]{GoreskyMacPherson}
  asserts that the inclusion $ι: S → X°$ induces an isomorphism
  $$
  ι_*: π_1(S) → π_1(X°).
  $$
  Thus, there exists a locally free, flat sheaf $\sG$ on $X° $ such that $\sG|_S
  \cong \sE|_S$.  The observation that Zariski-open subsets with complement of
  codimension at least two inside a smooth variety have the same fundamental
  group as the entire variety together with the choice of $\widehat X$ implies
  that the pullback of $\sG$ to $γ^{-1}(X°) ∩ \widehat X_{\reg}$ extends to a
  locally free, flat sheaf $\widehat \sG$ on the whole of $\widehat X$.  Setting
  $\widehat S := γ^{-1}(S) = γ^{*}D_1 ∩ \dots ∩ γ^{*}D_{n-2}$, we note that
  $$
  \widehat \sG |_{\widehat S}\cong (γ|_{\widehat S})^* (\sE|_S) \cong \bigl(γ^{[*]}\sE\bigr)|_{\widehat S}.
  $$
  As the set of all locally free, flat sheaves on $\widehat X$ whose rank is
  equal to $\rank \sE$ is bounded by \cite[Prop.~12.1]{GKP13}, we may apply
  Lemma~\ref{lem:bertstabY} above to conclude that $γ^{[*]}\sE$ is isomorphic to
  $\widehat \sG$, and hence locally free and flat, as claimed.
\end{proof}

%
%
\svnid{$Id: 07-tquot.tex 82 2014-07-21 07:55:34Z peternell $}

\section{Characterisation of torus quotients}
\label{sec:tquotnew}
\subversionInfo
\approvals{
  Daniel & yes \\
  Stefan & yes \\
  Thomas & yes
}

The main result of this section generalises a result for three-dimensional
varieties of Shepherd-Barron and Wilson \cite[Cor.~of Main Thm]{SBW94}, and
eliminates the a priori assumption on the codimension of the singular locus made
in \cite[Thm.~1.16]{GKP13}.

\begin{thm}[Characterisation of torus quotients]\label{thm:tquot}
  Let $X$ be a normal $\bQ$-factorial projective variety of dimension $n$ with
  only canonical singularities and numerically trivial canonical bundle, $K_X
  \equiv 0$.  Assume that there exists a desingularisation $π: \wtilde X → X$
  and an ample divisor $H ∈ \operatorname{Dix}(X)$ such that $c_2(\wtilde X)
  · (π^*H)^{n-2} = 0$.  Then, $X$ is smooth in codimension two, there exists
  an Abelian variety $A$ and a quasi-étale morphism $γ: A → X$.
\end{thm}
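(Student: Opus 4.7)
The plan is to apply the flatness criterion Theorem~\ref{thm:flat:2b} to the reflexive cotangent sheaf $\sE := \Omega_X^{[1]}$, and then deduce the structure of $X$ from the flatness of $\Omega^{[1]}$ on a quasi-étale cover. The main content is to verify the hypotheses of that flatness criterion in the present setting.

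First I would establish that $\Omega_X^{[1]}$ is $H$-semistable. Since $K_X \equiv 0$, its $H$-slope satisfies $\mu_H(\Omega_X^{[1]}) = K_X \cdot H^{n-1}/n = 0$. Miyaoka's generic semipositivity theorem, valid for normal varieties with $K_X$ pseudo-effective, together with the Mehta--Ramanathan principle shows that every factor in the Harder--Narasimhan filtration of $\Omega_X^{[1]}$ with respect to $H$ has $H$-slope $\geq 0$. As the total slope vanishes, every such slope must be zero, so $\Omega_X^{[1]}$ is $H$-semistable.

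Next I would verify the two Chern class equalities needed. The first, $c_1(\Omega_X^{[1]}) \cdot H^{n-1} = K_X \cdot H^{n-1} = 0$, is immediate. For the second, note that $c_1(\pi^{[*]}\Omega_X^{[1]}) = \pi^* K_X$ is numerically trivial, so the $c_1^2$-term vanishes and one must show $c_2(\pi^{[*]}\Omega_X^{[1]}) \cdot (\pi^*H)^{n-2} = 0$. I would exploit the natural, generically isomorphic injection $\pi^{[*]}\Omega_X^{[1]} \hookrightarrow \Omega_{\widetilde X}^1$, whose cokernel $\sT$ is torsion and supported on the exceptional locus. The Whitney formula, combined with the vanishing $c_1(\pi^{[*]}\Omega_X^{[1]}) \equiv 0$, gives
\[
c_2(\widetilde X) \cdot (\pi^*H)^{n-2} \;=\; c_2(\pi^{[*]}\Omega_X^{[1]}) \cdot (\pi^*H)^{n-2} \;+\; c_2(\sT) \cdot (\pi^*H)^{n-2},
\]
and the left-hand side vanishes by hypothesis. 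The hard step is thus to show that the $\sT$-contribution vanishes. My approach would be to combine the canonical-singularities description of $\sT$, as a successive extension of sheaves along exceptional divisors controlled by the non-negative discrepancies of $\pi$, with the Bogomolov--Gieseker inequality of Theorem~\ref{thm:BGI} applied to $\pi^{[*]}\Omega_X^{[1]}$ on a general complete intersection surface $\widetilde S \subset \widetilde X$, in order to sandwich $c_2(\sT) \cdot (\pi^*H)^{n-2}$ between a non-negative and a non-positive quantity. This technical verification is the main obstacle of the proof; the identity it yields is essentially a Miyaoka--Yau-type comparison between $c_2$ of the cotangent sheaf on $X$ and $c_2$ of any resolution.

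Once the hypotheses of Theorem~\ref{thm:flat:2b} are established, we obtain a quasi-étale cover $\gamma : \widehat X \to X$ on which $\gamma^{[*]}\Omega_X^{[1]} \cong \Omega_{\widehat X}^{[1]}$ is locally free and flat. Local freeness of the reflexive cotangent sheaf on the normal variety $\widehat X$ forces $\widehat X$ to be smooth, and consequently $X$ to be smooth in codimension two, since $\gamma$ is unbranched in codimension one. A smooth projective variety whose cotangent bundle is flat and whose canonical class is numerically trivial is, by the classical Beauville--Bogomolov--Yau decomposition applied after passing to a further finite étale cover, a finite étale quotient of an abelian variety $A$. Composing this cover with $\gamma$ produces the desired quasi-étale morphism $A \to X$, finishing the proof.
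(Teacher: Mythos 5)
Your overall strategy --- apply the flatness criterion of Theorem~\ref{thm:flat:2b} directly to $\sE = \Omega^{[1]}_X$ and then invoke Beauville--Bogomolov on a smooth quasi-étale cover --- is genuinely different from the paper's, which first proves that $X$ is smooth in codimension two (Lemma~\ref{lem:7-2}) and then simply cites \cite[Thm.~1.16]{GKP13}, where the theorem was already established under that a priori assumption. Your route has a genuine logical gap at its final step: from ``$\what X$ is smooth and $\gamma$ is unbranched in codimension one'' you conclude that $X$ is smooth in codimension two, and this inference is false. A quotient singularity along a codimension-two locus admits a smooth quasi-étale cover (already a single $A_1$-point of a surface does, as in $A \to A/\{\pm 1\}$ for an Abelian surface $A$), so smoothness of $\what X$ gives no information about $X$ in codimension two. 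The codimension-two smoothness asserted in the theorem is a nontrivial consequence of the hypothesis $c_2(\wtilde X)\cdot(\pi^* H)^{n-2}=0$ imposed on a resolution of $X$ \emph{itself}, and must be proved directly; the paper does this in Lemma~\ref{lem:7-2} by comparing $\pi$ with a $\bQ$-factorial terminalisation, deducing $c_2\bigl(\Omega^1_{\check X}|_{\check S}\bigr) \le 0$ on a general complete intersection surface, forcing equality via Miyaoka's Chern class inequality, and then combining the surface flatness criterion (Theorem~\ref{thm:flat:1}, via Lemma~\ref{lem:7-1}) with the Lipman--Zariski theorem for canonical spaces.

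The second gap is the step you yourself flag as ``the main obstacle'': passing from $c_2(\wtilde X)\cdot(\pi^*H)^{n-2}=0$ to $c_2\bigl(\pi^{[*]}\Omega^{[1]}_X\bigr)\cdot(\pi^*H)^{n-2}=0$. This is not routine torsion-sheaf bookkeeping --- it is essentially equivalent to the codimension-two smoothness above, since any surface quotient singularity of $X$ along a codimension-two locus makes the two numbers differ by a strictly positive local contribution coming from the exceptional $(-2)$-curves. The ``sandwich'' you propose would therefore, if carried out, have to contain the entire content of Lemma~\ref{lem:7-2}; as written it is a placeholder. Your remaining ingredients --- semistability of $\Omega^{[1]}_X$ via generic semipositivity, the identification $\gamma^{[*]}\Omega^{[1]}_X\cong\Omega^{[1]}_{\what X}$, Lipman--Zariski on $\what X$, and Beauville--Bogomolov --- are sound and are in essence the ingredients of \cite[Thm.~1.16]{GKP13}, but they only become available once smoothness in codimension two is established.
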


Theorem~\ref{thm:tquot} is shown below, \vpageref{pf:thm:tquot}.  It follows
almost immediately from the following two lemmas.

\begin{lem}\label{lem:7-1}
  Let $π : \wtilde{S} → S$ be a desingularisation of a normal, projective,
  $\bQ$-factorial surface $S$ with only rational singularities.  Let $α ∈
  \Mov(S)$ be a movable class with $α² > 0$ and let $\sE$ be a torsion-free,
  $(π^*α)$-semistable, coherent sheaf on $\wtilde S$ such that
  $$
  c_1(\sE) · \bigl( π^*α \bigr) = c_1(\sE)² - c_2(\sE) = 0.
  $$
  Then, $π_* \sE$ is locally free and flat on $S$.
\end{lem}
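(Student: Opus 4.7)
The plan is to reduce the lemma to Theorem~\ref{thm:flat:1} applied on the smooth surface $\wtilde S$ and then transport flatness down to $S$ using the favourable topology of rational surface singularities.

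First I would check that all hypotheses of Theorem~\ref{thm:flat:1} hold for $\sE$ with respect to the pulled-back class $π^*α ∈ \Mov(\wtilde S)$. Pullback preserves movability, the projection formula gives $(π^*α)² = α² > 0$, and the Chern-number equalities and semistability are hypotheses of the lemma.  Theorem~\ref{thm:flat:1} then yields that $\sE$ is locally free and flat on $\wtilde S$, defined by a representation $ρ : π_1(\wtilde S) → \Gl(r, \bC)$, where $r = \rank \sE$.

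Next I would exploit that $S$ has only rational surface singularities, so that the exceptional divisor $E_p$ over each singular point $p ∈ S$ is a simply connected tree of smooth rational curves.  A sufficiently small analytic neighbourhood $U = π^{-1}(V)$ of $E_p$ deformation retracts onto $E_p$ and is therefore itself simply connected, whence $\sE^{an}|_U \cong \sO_U^{\oplus r}$ is analytically trivial.  Using $π_*\sO_{\wtilde S} = \sO_S$ this yields $(π_*\sE)|_V \cong \sO_V^{\oplus r}$, and combined with the obvious local freeness over $S_{\reg}$ shows that $π_*\sE$ is a coherent locally free sheaf on $S$.

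Finally, a Van Kampen argument comparing the open covers $S = S_{\reg} \cup \bigcup_p U_p$ and $\wtilde S = (\wtilde S \setminus E) \cup \bigcup_p V_p$ (with $U_p$ contractible, $V_p = π^{-1}(U_p)$ simply connected, and common intersections $U_p \setminus \{p\}$ having fundamental group equal to the link group $π_1(L_p)$) identifies both $π_1(S)$ and $π_1(\wtilde S)$ with the quotient of $π_1(S_{\reg})$ by the normal subgroup generated by the various $π_1(L_p)$.  Hence $π_* : π_1(\wtilde S) → π_1(S)$ is an isomorphism, the representation $ρ$ descends to $ρ' : π_1(S) → \Gl(r, \bC)$, and the local trivialisations established above identify $(π_*\sE)^{an}$ with the flat $\sO_S^{an}$-module associated to $ρ'$, in the sense of Definition~\ref{defn:flat}.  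The step I expect to require the most care is this last compatibility: one must verify that the trivialisations $(π_*\sE)|_V \cong \sO_V^{\oplus r}$ fit together across the singular points with transition data coming precisely from $ρ'$, which ultimately relies on the fact that the analytic trivialisation of $\sE^{an}|_{V_p}$ is the one induced by the representation $ρ$ itself.
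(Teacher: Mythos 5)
Your proposal is correct and follows essentially the same route as the paper: apply Theorem~\ref{thm:flat:1} on $\wtilde S$ to get a representation $ρ: π_1(\wtilde S) → \operatorname{Gl}(r,\bC)$, use simple connectivity of the exceptional fibres (rational singularities) to trivialise $\sE$ near them and to identify $π_1(\wtilde S)$ with $π_1(S)$, and then recognise $π_*\sE$ as the flat sheaf attached to the descended representation. The paper's proof is just a terser version of the same argument; your additional detail on the Van Kampen step and the compatibility of trivialisations fills in what the paper leaves implicit.
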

\begin{proof}
  Write $r := \rank \sE$.  It follows directly from Theorem~\ref{thm:flat:1}
  that $\sE$ is flat, given by a representation $ρ: π_1(\widetilde S) →
  GL_{r}(\bC)$.  If $x ∈ S$ is any singular point with associated fibre $F :=
  π^{-1}(x)$, then $F$ is simply-connected and $\sE|_F$, which is given by a
  representation of $π_1(F)$, therefore trivial.  Consequently, if $\sE'$ is the
  locally free, flat sheaf defined by $ρ ◦ (π_*)^{-1} : π_1(S) → GL_{r}(\bC)$,
  we have $\sE = π^* \sE'$ and $\sE' = π_* \sE$.
\end{proof}

\begin{lem}\label{lem:7-2}
  Let $X$ be a normal, projective variety of dimension $n ≥ 2$ with numerically
  trivial canonical class, $K_X \equiv 0$, that has at most canonical
  singularities.  Let $π : \wtilde X → X$ be any resolution of singularities and
  $H$ an ample divisor on $X$.  If $c_2(\wtilde X) · (π^*H)^{n-2} = 0$, then
  $X$ is smooth in codimension two.
\end{lem}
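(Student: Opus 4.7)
The plan is to argue by contradiction and reduce to a surface statement by cutting with general hyperplane sections.

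Assume $X$ is not smooth in codimension two, so that $\mathrm{Sing}(X)$ has an irreducible component of dimension $n-2$. For $m \gg 0$ and general $D_1, \dots, D_{n-2} \in |mH|$, set $S := D_1 \cap \cdots \cap D_{n-2}$, which is a normal $\bQ$-factorial projective surface with at most canonical, hence ADE, singularities (cf.~the remark after Proposition~\ref{MRbig}); by the assumption and genericity, $\mathrm{Sing}(S) = S \cap \mathrm{Sing}(X)$ is non-empty and consists of finitely many ADE points. Set $\wtilde D_i := \pi^* D_i$ and $\wtilde S := \wtilde D_1 \cap \cdots \cap \wtilde D_{n-2} = \pi^{-1}(S)$; for a sufficiently general choice $\wtilde S$ is smooth, and $\pi_S := \pi|_{\wtilde S} \colon \wtilde S \to S$ is a resolution.

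The first step is a Chern class computation on $\wtilde S$. Write $\wtilde H := \pi^*H$. The normal bundle sequence $0 \to T_{\wtilde S} \to T_{\wtilde X}|_{\wtilde S} \to N_{\wtilde S/\wtilde X} \to 0$, the splitting $N_{\wtilde S/\wtilde X} \cong \bigoplus_{i=1}^{n-2}\sO_{\wtilde S}(m\wtilde H|_{\wtilde S})$, the Whitney-sum formula, and adjunction $K_{\wtilde S} = (K_{\wtilde X} + (n-2)m\wtilde H)|_{\wtilde S}$ express $c_2(T_{\wtilde S})$ as a linear combination of $c_2(\wtilde X)\cdot[\wtilde S]$, $K_{\wtilde S}\cdot c_1(N)$, and $c_2(N)$. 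Substituting $[\wtilde S] = m^{n-2}\wtilde H^{n-2}$, the hypothesis $c_2(\wtilde X)\cdot \wtilde H^{n-2} = 0$, and the vanishing $K_{\wtilde X}\cdot \wtilde H^{n-1} = 0$ (which follows from $K_X \equiv 0$ together with the fact that $K_{\wtilde X}$ is numerically a $\pi$-exceptional effective $\bQ$-divisor, hence has trivial push-forward to $X$), I arrive at the closed formula $\chi_{\mathrm{top}}(\wtilde S) = c_2(T_{\wtilde S}) = \tfrac{(n-1)(n-2)}{2}\, m^n H^n$, which I refer to as $(\star)$.

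Next, I exploit the singularities of $S$. Since $\pi_S$ contracts over each $p \in \mathrm{Sing}(S)$ a connected tree of $k_p \geq 1$ smooth rational $(-2)$-curves, we have $\chi_{\mathrm{top}}(\wtilde S) - \chi_{\mathrm{top}}(S) = \sum_{p} k_p > 0$. Separately, I would compute $\chi_{\mathrm{top}}(S)$ directly: using $c_1(T_X) = -K_X \equiv 0$ and the Chern class identity $c(T_S) = c(T_X)|_S/c(N_{S/X})$ interpreted virtually on $X$ — equivalently, via a Koszul/Riemann--Roch calculation on $\wtilde X$ combined with Noether's formula on $\wtilde S$, using that canonical singularities are rational (so $\chi(\sO_S) = \chi(\sO_{\wtilde S})$) and crepantly resolved on surfaces (so $K_{\wtilde S}^2 = K_S^2$) — this yields the same value $\tfrac{(n-1)(n-2)}{2} m^n H^n$. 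Comparison with $(\star)$ then forces $\sum_p k_p = 0$, contradicting $\mathrm{Sing}(S) \neq \emptyset$; hence $\mathrm{Sing}(X)$ has codimension at least three.

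The principal obstacle is making the virtual computation of $\chi_{\mathrm{top}}(S)$ fully rigorous in the presence of the codimension-two singularities of $X$: concretely, one needs Miyaoka-type positivity at ADE points showing that the ``expected'' value of $\chi_{\mathrm{top}}(S)$ (coming from Chern-class formulas applied formally on $X$) and the ``actual'' value differ strictly whenever $S$ acquires singularities. This relies crucially on the rationality of canonical singularities, on crepancy of ADE surface resolutions, and on the vanishing $c_2(\wtilde X)\cdot \wtilde H^{n-2} = 0$ from the hypothesis.
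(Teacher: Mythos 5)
Your reduction to a general complete intersection surface $S$ and the computation $(\star)$ of $\chi_{\operatorname{top}}(\wtilde S)$ are correct and run parallel to the paper's Equation~\eqref{eq:AC}; the relation $\chi_{\operatorname{top}}(\wtilde S) = \chi_{\operatorname{top}}(S) + \sum_p k_p$ is also fine. The gap is the ``independent'' computation of $\chi_{\operatorname{top}}(S)$, and it is not merely a matter of making a virtual calculation rigorous: every invariant you propose to use for it --- $\chi(\sO_S)$ (via rationality), $K_S^2$ (via crepancy of the minimal resolution of ADE points), Noether's formula --- is unchanged when $S$ is replaced by its minimal resolution. Consequently the value your recipe produces is $12\chi(\sO_S) - K_S^2 = \chi_{\operatorname{top}}(S_{\min})$, the Euler number of the \emph{minimal resolution}, not of $S$; comparing it with $(\star)$ only measures how far $\wtilde S$ is from being minimal over $S$ and says nothing about whether $S$ itself is singular. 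The quantity $\sum_p k_p$ that you need to show vanishes never enters any formula built from crepant-invariant data, so the comparison degenerates to a tautology. Put differently: if the contribution of the smooth locus to $c_2(\wtilde X)\cdot(\pi^*H)^{n-2}$ were allowed to be negative, it could exactly cancel the strictly positive contribution of the exceptional curves over a codimension-two singular stratum; your bookkeeping cannot exclude this scenario.

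The missing ingredient is a global positivity statement for the second Chern class of the cotangent sheaf --- not a local computation ``at ADE points'' --- and this is where the paper's proof genuinely diverges. It passes to a $\bQ$-factorial terminalisation $\rho\colon \what X \to X$ (crepant and smooth in codimension two, so that $\what S = \rho^{-1}(S)$ is the minimal resolution of $S$ and lies in the smooth locus), compares the two normal-bundle computations to obtain $c_2\bigl(\Omega^1_{\what X}|_{\what S}\bigr) \le 0$, and then invokes Miyaoka's Chern class inequality \cite[Thm.~1.1]{Miyaoka87} to force equality. Even then, smoothness of $X$ does not follow from the numerics alone: the paper feeds the resulting vanishings into the flatness criterion of Lemma~\ref{lem:7-1} (resting on Theorem~\ref{thm:flat:1}) to conclude that $\Omega^{[1]}_X$ is locally free near $S$, and only the Lipman--Zariski theorem for canonical spaces then yields smoothness. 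Your proposal would need all three of these inputs; as written it establishes only $(\star)$.
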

\begin{proof}
  We may assume without loss of generality that $\sH := \sO_X(H)$ is very ample.
  Choose a general $(n-2)$-tuple of elements $H_i ∈ | \sH |$ and consider the
  associated complete intersection surface $S := ∩_i H_i ⊆ X$, which has at
  worst canonical singularities.  We aim to show that $X$ is smooth near $S$.
  Write
  $$
  \sN_{S/X} := (\sH|_S)^{\oplus n-2}, \quad \wtilde S := π^{-1}(S) \quad\text{and}\quad π_S :=
  π|_{\wtilde S}.
  $$
  Since $X$ has canonical singularities, we may write $K_{\wtilde X} = π^*K_X +
  D \equiv D$, where $D$ is effective and $π$-exceptional.  We obtain the
  following equalities of intersection numbers,
  $$
  \bigl[K_{\wtilde X}|_{\wtilde S} \bigr] · \bigl[\sN^*_{\wtilde S/\wtilde
    X}\bigr] = \bigl[D|_{\wtilde S}\bigr] · \bigl[\sN^*_{\wtilde S/\wtilde
    X}\bigr] = - \bigl[D|_{\wtilde S}\bigr] · \bigl[ π_S^* \sN_{S/X} \bigr] =
  0
  $$
  and hence
  \begin{equation}\label{eq:cFx}
    \bigl[K_{\wtilde S} \bigr] · \bigl[\sN^*_{\wtilde S/\wtilde X} \bigr] =
    \Bigl( \bigl[K_{\wtilde X}|_{\wtilde S} \bigr] + \bigl[\sN_{\wtilde S/\wtilde
      X} \bigr] \Bigr) · \bigl[\sN^*_{\wtilde S/\wtilde X} \bigr] = - \bigl[
    \sN_{\wtilde S/\wtilde X}\bigr]².
  \end{equation}
  The normal bundle sequence for $\wtilde S$ in $\wtilde X$ expresses the
  relevant second Chern class as follows,
  \begin{equation}\label{eq:AC}
    \begin{aligned}
     0= c_2 \bigl( Ω¹_{\wtilde X}|^{\vphantom{1}}_{\wtilde S} \bigr) & = c_2 \bigl( \sN_{\wtilde S/\wtilde X}^* \bigr) + c_2 \bigl( Ω¹_{\wtilde S} \bigr) + \bigl[K_{\wtilde S} \bigr] · \bigl[\sN_{\wtilde S/\wtilde X}^*\bigr] \\
      & = c_2 \bigl( \sN_{\wtilde S/\wtilde X}^* \bigr) + χ_{\operatorname{top}} \bigl(\wtilde S \bigr) - \bigl[\sN_{\wtilde S/\wtilde X}\bigr]² && \quad\text{by \eqref{eq:cFx}.}
    \end{aligned}
  \end{equation}

  We now compare $π$ to a $\bQ$-factorial terminalisation $ρ : \what X → X$ of
  $X$, which exists by \cite[Cor.~1.4.3]{BCHM10}.  As the name suggests, $\what
  X$ is $\bQ$-factorial and has at most terminal singularities.  Moreover, as
  $X$ has canonical singularities, $ρ$ is crepant, and
  \begin{equation}\label{eq:tkx}
    K_{\what X} \: \sim_{\bQ} \: ρ^* K_X \equiv 0.
  \end{equation}
  Write $\what S := ρ^{-1}(S)$ and $ρ_S := ρ|_{\what S}$.  Since varieties with
  terminal singularities are smooth in codimension two, the surface $\what S$ is
  smooth, and entirely contained in the smooth locus of $\what X$.  As above,
  compute
  \begin{equation}\label{eq:DC}
    \begin{aligned}
      c_2 \bigl( Ω¹_{\what X}|^{\vphantom{1}}_{\what S} \bigr) & = c_2 \bigl( \sN_{\what S/\what X}^*\bigr) + c_2 \bigl( Ω¹_{\what S} \bigr) + \bigl[K_{\what S} \bigr] · \bigl[\sN_{\what S/\what X}\bigr] \\
      & = c_2 \bigl( \sN^*_{\what S/\what X} \bigr) + χ_{\operatorname{top}} \bigl(\what S \bigr) - \bigl[\sN_{\what S/\what X}\bigr]².
    \end{aligned}
  \end{equation}
  It follows from adjunction and from the $\bQ$-linear equivalence $K_{\what X}
  \sim_{\bQ} ρ^* K_X$ that $K_{\what S} \sim_{\bQ} (ρ|_{\what S})^* K_S$, so
  that $\what S$ is in fact the minimal resolution of $S$.  Consequently, there
  exists a birational $S$-morphism $β : \wtilde S → \what S$, which is a
  sequence of blowing-down $(-1)$-curves.  This has two consequences.  First,
  there is an inequality of topological Euler characteristics,
  $χ_{\operatorname{top}} \bigl(\what S \thinspace \bigr) ≤
  χ_{\operatorname{top}} \bigl(\wtilde S \bigr)$.  Secondly, we see that
  $$
  \sN_{\wtilde S/\wtilde X} = π_S^* \sN_{S/X} = β^* ρ_S^* \sN_{S/X} = β^* \sN_{\what S/\what X},
  $$
  so that the Chern numbers of the normal bundles agree, $c_2( \sN_{\wtilde
    S/\wtilde X}) = c_2( \sN_{\what S/\what X})$ and $[\sN_{\wtilde S/\wtilde
    X}]² = [\sN_{\what S/\what X}]²$.  Comparing \eqref{eq:AC} and
  \eqref{eq:DC}, we thus see that $c_2 \bigl( Ω¹_{\what
    X}|^{\vphantom{1}}_{\what S} \bigr) ≤ 0$.

  To end the argument, let $\check X → \what X$ be a strong resolution of
  singularities, with induced map $r: \check X → X$, surface $\check S :=
  r^{-1}(S) ⊆ \check X$ and restriction $r_S := r|_{\check S}$.  Since $\check
  X$ and $\what X$ are isomorphic along $\what S$, we have $c_2 \bigl(
  Ω¹_{\check X}|^{\vphantom{1}}_{\check S} \bigr) ≤ 0$, and Miyaoka's Chern
  class inequalities, \cite[Thm.~1.1]{Miyaoka87}, assert that in fact equality
  holds, $c_2 \bigl( Ω¹_{\check X}|^{\vphantom{1}}_{\check S} \bigr) = 0$.

  To sum up, we have seen in \eqref{eq:tkx} that
  $$
  c_1 \bigl( Ω¹_{\check X}|^{\vphantom{1}}_{\check S} \bigr) = \bigl[K_{\check
    X}|_{\check S} \bigr] = \bigl[ K_{\what X}|_{\what S} \bigr] \equiv 0
  $$
  and thus
  $$
  c_1 \bigl( Ω¹_{\check X}|^{\vphantom{1}}_{\check S} \bigr) ·
  (r^*H)|_{\check S} = c_1 \bigl( Ω¹_{\check X}|^{\vphantom{1}}_{\check S}
  \bigr)² - c_2 \bigl( Ω¹_{\check X}|^{\vphantom{1}}_{\check S} \bigr) = 0.
  $$
  Lemma~\ref{lem:7-1} thus applies to show that $(r_S)_* \bigl( Ω¹_{\check
    X}|^{\vphantom{1}}_{\check S} \bigr)$ is locally free and flat.  Set
  $Ω^{[1]}_X := \bigl( Ω¹_X \bigr)^{**}$.  Since $Ω^{[1]}_X|^{\vphantom{[1]}}_S$
  is reflexive by \cite[Prop.~5.2]{GKP13}, it necessarily agrees with $(r_S)_*
  \bigl( Ω¹_{\check X}|^{\vphantom{1}}_{\check S} \bigr)$, showing that both
  $Ω^{[1]}_X$ and $\sT_X = \bigl(Ω^{[1]}_X\bigr)^*$ are locally free near $S$.
  The Lipman-Zariski theorem for canonical spaces, \cite[Thm.~6.1]{GKKP11} or
  \cite[Thm.~3.8]{Druel13a}, thus applies, showing that $X$ is smooth near $S$.
\end{proof}
 
\begin{proof}[Proof of Theorem~\ref*{thm:tquot}]\label{pf:thm:tquot}
  Lemma~\ref{lem:7-2} asserts that $X$ is smooth in codimension two.  Under this
  additional assumption, Theorem~\ref{thm:tquot} has been shown in
  \cite[Thm.~1.16]{GKP13}.
\end{proof}

\end{document}